\newglossaryentry{L}
{
  name={\ensuremath{L}},
  description={Lagrangian function},
  sort=L
}
\newglossaryentry{LM}
{
  name={\ensuremath{L_{(M)}}},
  description={Inferred Lagrangian function based on all $M$ observed data points and regularisation conditions},
  sort=L
}
\newglossaryentry{LOmega0}
{
  name={\ensuremath{L_{\Omega_0}}},
  description={Inferred Lagrangian function based on observed data points contained in the finite set $\Omega_0$ and regularisation conditions},
  sort=L
}
\newglossaryentry{Lj}
{
  name={\ensuremath{L_{(j)}}},
  description={Inferred Lagrangian function based on the first $j$ observed data points in a sequence of observations and regularisation conditions},
  sort=L
}
\newglossaryentry{LRef}
{
  name={\ensuremath{L_{\mathrm{ref}}}},
  description={A true Lagrangian that defines a dynamical system. (There are many true Lagrangians for the same dynamical system.)},
  sort=L
}
\newglossaryentry{LInf}
{
  name={\ensuremath{L_{(\infty)}}},
  description={Inferred Lagrangian function based on an infinite sequence of observed data points and regularisation conditions},
  sort=L
}
\newglossaryentry{Ld}
{
  name={\ensuremath{L_d}},
  description={discrete Lagrangian function},
  sort=Ld
}
\newglossaryentry{LdM}
{
  name={\ensuremath{L_{d,(M)}}},
  description={Inferred discrete Lagrangian function based on all $M$ observed data points},
  sort=Ld
}
\newglossaryentry{Ldj}
{
  name={\ensuremath{L_{d,(j)}}},
  description={Inferred discrete Lagrangian function based on the first $j$ observed data points},
  sort=Ld
}
\newglossaryentry{EL}
{
  name={\ensuremath{\mathrm{EL}}},
  description={Euler--Lagrange operator, see \eqref{eq:ELContinuous}},
  sort=EL
}
\newglossaryentry{ELxhatj}
{
  name={\ensuremath{\mathrm{EL}_{\hat{x}^{(j)}}}},
  description={Euler--Lagrange operator composed with evaluation at a data point, see \eqref{eq:ELxhat}},
  sort=EL
}
\newglossaryentry{ELxhatj1}
{
  name={\ensuremath{\mathrm{EL}^1_{\hat{x}^{(j)}}}},
  description={Applies the operator $\mathrm{EL}_{\hat{x}^{(j)}}$ to the first input variable of a scalar valued binary function, see \eqref{eq:EL12}},
  sort=EL
}
\newglossaryentry{ELxhatj2}
{
  name={\ensuremath{\mathrm{EL}^2_{\hat{x}^{(j)}}}},
  description={Applies the operator $\mathrm{EL}_{\hat{x}^{(j)}}$ to the second input variable of a scalar valued binary function, see \eqref{eq:EL12}},
  sort=EL
}
\newglossaryentry{DEL}
{
  name={\ensuremath{\mathrm{DEL}}},
  description={Discrete Euler--Lagrange operator, see \eqref{eq:DELEQIntro}},
  sort=DEL
}
\newglossaryentry{DELxhatj}
{
  name={\ensuremath{\mathrm{DEL}_{\hat{x}^{(1)}}}},
  description={discrete Euler--Lagrange operator composed with evaluation at the data point $\hat{x}^{(1)}$},
  sort=DEL
}
\newglossaryentry{DELxhatj1}
{
  name={\ensuremath{\mathrm{DEL}^1_{\hat{x}^{(j)}}}},
  description={Applies the operator $\mathrm{DEL}_{\hat{x}^{(j)}}$ to the first input variable of a scalar valued binary function},
  sort=DEL
}
\newglossaryentry{DELxhatj2}
{
  name={\ensuremath{\mathrm{DEL}^2_{\hat{x}^{(j)}}}},
  description={Applies the operator $\mathrm{DEL}_{\hat{x}^{(j)}}$ to the second input variable of a scalar valued binary function},
  sort=DEL
}
\newglossaryentry{nabla1}
{
  name={\ensuremath{\nabla_1}},
  description={Gradient of a binary function with respect to its first (vector valued) input argument},
  sort=nabla
}
\newglossaryentry{nabla2}
{
  name={\ensuremath{\nabla_2}},
  description={Gradient of a binary function with respect to its second (vector valued) input argument},
  sort=nabla
}
\newglossaryentry{nabla12}
{
	name={\ensuremath{\nabla_{1,2}}},
	description={Gradient of a binary function with respect to its first (vector valued) and second input arguments, i.e.\ $\nabla_{1,2}f(a,b)=\frac{\p f}{\p a \p b}$},
	sort=nabla
}
\newglossaryentry{xhat}
{
  name={\ensuremath{\hat x}},
  description={Data point. In the continuous Lagrangian setting $\hat x = (x,\dot x, \ddot x)$. In the discrete Lagrangian setting $\hat x = (x_0,x_1, x_2)$ is a snapshot of a discrete trajectory of length 3},
  sort=x
}
\newglossaryentry{xbar}
{
  name={\ensuremath{\overline {x}}},
  description={Data point. In the continuous Lagrangian setting $\overline x = (x,\dot x)$. In the discrete Lagrangian setting $\overline x = (x_0,x_1)$ is a snapshot of a discrete trajectory of length 2},
  sort=x
}
\newglossaryentry{xb}
{
  name={\ensuremath{\overline {x}_b}},
  description={Point in $\Omega$ or $(\mathbb{R}^d)^2$ used in the formulation of regularisation conditions},
  sort=x
}
\newglossaryentry{pb}
{
  name={\ensuremath{p_b}},
  description={Vector used in the formulation of regularisation conditions. (Discrete) conjugate momentum at $\overline {x}_b$},
  sort=pb
}
\newglossaryentry{cb}
{
  name={\ensuremath{c_b}},
  description={Scalar used in the formulation of regularisation conditions. Value of (discrete) Lagrangian at $\overline {x}_b$},
  sort=cb
}
\newglossaryentry{var}
{
  name={\ensuremath{\mathrm{var}}},
  description={Variance of a random variable},
  sort=var
}
\newglossaryentry{Ham}
{
  name={\ensuremath{\mathrm{Ham}}},
  description={Assigns a Hamiltonian function to a continuous Lagrangian, see \eqref{eq:HamOperator}. The corresponding symplectic structure is obtained using the operator Sympl},
  sort=ham
}
\newglossaryentry{Sympl}
{
  name={\ensuremath{\mathrm{Sympl}}},
  description={Assigns a symplectic 2-form to a continuous or discrete Lagrangian, see \eqref{eq:SymplecticStructure} or \eqref{eq:SymplDiscrete}, respectively},
  sort=sympl
}
\newglossaryentry{Mm}
{
  name={\ensuremath{\mathrm{Mm}}},
  description={Assigns an expression for the conjugate momentum to a continuous Lagrangian, see \eqref{eq:MmDef}. When a lower index $\overline x$ is given, we consider composition with the evaluation functional at $\overline x$, see \eqref{eq:MmEvaluate}. When an upper index $1$ or $2$ is provided, the operator acts on the first or second input argument of a binary function, respectively},
  sort=Mm
}
\newglossaryentry{evxb}
{
  name={\ensuremath{\mathrm{ev}_{\overline{x}_b}}},
  description={Evaluation functional at point $\overline{x}_b$, see \eqref{eq:evalFunc}},
  sort=ev
}
\newglossaryentry{MmPlus}
{
  name={\ensuremath{\mathrm{Mm}^+}},
  description={Assigns the right discrete conjugate momentum expression to a discrete Lagrangian, see \eqref{eq:MmPlus}},
  sort=Mm
}
\newglossaryentry{MmMinus}
{
  name={\ensuremath{\mathrm{Mm}^-}},
  description={Assigns the left discrete conjugate momentum expression to a discrete Lagrangian, see \eqref{eq:MmMinus}. When a lower index $\overline x$ is given, we consider composition with the evaluation functional at $\overline x$. When an upper index $1$ or $2$ is provided, the operator acts on the first or second input argument of a binary function, respectively},
  sort=Mm
}
\newglossaryentry{Vol}
{
  name={\ensuremath{\mathrm{Vol}}},
  description={Assigns a volume form to a continuous or discrete Lagrangian, see \eqref{eq:VolDef} or \eqref{eq:VolDefLd}, respectively},
  sort=vol
}
\newglossaryentry{g}
{
  name={\ensuremath{g}},
  description={In the setting of continuous Lagrangians, $g(L)$ is the acceleration field induced by a non-degenerate Lagrangian $L$, i.e.\ $g_{\overline{x}}(L) = g(L)(\overline x)$ is the solution of $\mathrm{EL}(L)(x,\dot x,\ddot x)$ for $\ddot x$.
  In the setting of discrete Lagrangians, $g(L_d)$ is the 2nd order dynamical evolution rule induced by a non-degenerate, discrete Lagrangian $L_d$, i.e.\ $g_{\overline{x}}(L_d) = g(L_d)(\overline x)$ is the solution of $\mathrm{DEL}(L_d)(\overline x,x_2)$ for $x_2$},
  sort=g
}
\newglossaryentry{gref}
{
  name={\ensuremath{g_\mathrm{ref}}},
  description={The acceleration field $g(L_{\mathrm{ref}})$ (continuous setting) or the 2nd order dynamical evolution rule $g(L^{\mathrm{ref}}_d)$ (discrete setting) induced by a non-degenerate discrete Lagrangian $L^{\mathrm{ref}}_d$},
  sort=g
}
\newglossaryentry{grefbar}
{
  name={\ensuremath{\overline{g_\mathrm{ref}}}},
  description={The dynamical evolution rule induced by a non-degenerate discrete Lagrangian $L_{\mathrm{ref}}$, i.e.\ $\overline{g_\mathrm{ref}}(x_0,x_1)=(x_1,g_{\mathrm{ref}}(x_0,x_1))$},
  sort=g
}
\newglossaryentry{TRd}
{
  name={\ensuremath{T\mathbb{R}^d}},
  description={Tangent bundle over $\mathbb{R}^d$},
  sort=tangentbundle
}
\newglossaryentry{K}
{
  name={\ensuremath{K}},
  description={kernel function},
  sort=K
}
\newacronym{RKHS}{RKHS}{reproducing kernel Hilbert space}
\newglossaryentry{U}
{
  name={\ensuremath{U}},
  description={Reproducing kernel Hilbert space induced by the kernel $K$},
  sort=U
}
\newglossaryentry{V}
{
  name={\ensuremath{V}},
  description={Topological vector space with topological dual $V^\ast$},
  sort=V
}
\newglossaryentry{B}
{
  name={\ensuremath{B}},
  description={Separable Banach space with quadratic norm $\| \cdot \|_B$ and linear, positive symmetric bijection $Q \colon B^\ast \to B$ such that $\|u\|_B=(Q^{-1}u)u$},
  sort=B
}
\newglossaryentry{Q}
{
  name={\ensuremath{Q}},
  description={Linear, positive symmetric bijection $Q \colon B^\ast \to B$ such that $\|u\|_B=(Q^{-1}u)u$ for a separable Banach space $B$ with quadratic norm $\| \cdot \|_B$},
  sort=Q
}
\newglossaryentry{T}
{
  name={\ensuremath{T}},
  description={Positive symmetric linear operator. $T$ occurs as covariance operators of Gaussian fields},
  sort=T
}
\newglossaryentry{MathcalKappa}
{
  name={\ensuremath{\mathcal{K}}},
  description={Positive symmetric linear operator.
  	Quadratic identification of the RKHS $U^\ast$ and $U$ induced by the kernel $K$, see \eqref{eq:MathcalKappa}. It is used as a covariance operator of Gaussian fields defined on the RKHS $U$},
  sort=K
}
\newglossaryentry{xi}
{
  name={\ensuremath{\xi}},
  description={Gaussian random field},
  sort=xi
}
\newglossaryentry{xiM}
{
  name={\ensuremath{\xi_M}},
  description={Gaussian random field conditioned on all $M$ data points and regularisation conditions},
  sort=xi
}
\newglossaryentry{Normal}
{
  name={\ensuremath{\mathcal{N}}},
  description={Gaussian distribution or Gaussian random field distribution. $\xi \sim \mathcal{N}(u,T)$ denotes a Gaussian random field with mean $u$ and covariance operator $T$},
  sort=N
}
\newglossaryentry{phi}
{
  name={\ensuremath{\phi}},
  description={Element in the topological dual of a vector space (typically dual of $U$ or $B$)},
  sort=phi
}
\newglossaryentry{Phi}
{
  name={\ensuremath{\Phi}},
  description={Element in $(U^\ast)^n$ or $(B^\ast)^n$ for some finite $n \in \mathbb{N}$},
  sort=phi
}
\newglossaryentry{Phibm}
{
  name={\ensuremath{\Phi_b^{(M)}}},
  description={Functional consisting of the evaluation functionals of the (discrete) Euler--Lagrange operator at observation points, evaluation of the (discrete) conjugate momentum and the evaluation functional at reference point $\overline {x}_b$, see \eqref{eq:PhibM} or \eqref{eq:PhibmDiscrete} for the discrete case},
  sort=phibm
}
\newglossaryentry{PhiN}
{
  name={\ensuremath{\Phi_N}},
  description={Functional that evaluates the conjugate momentum and value of a (discrete) Lagrangian at a reference point $\overline {x}_b$},
  sort=phin
}
\newglossaryentry{ybm}
{
  name={\ensuremath{y_b^{(M)}}},
  description={Vector to serve as the right hand side when conditioning a Gaussian random field on evaluations of $\Phi_b^{(M)}$, see \eqref{eq:ybM} or \eqref{eq:PhibmDiscrete} for the continuous or discrete Lagrangian case, respectively},
  sort=ybm
}
\newglossaryentry{Theta}
{
  name={\ensuremath{\Theta}},
  description={Matrix required for the computation of posterior mean and covariance of a Gaussian random field conditioned on finitely many linear observations. It occurs in the treatment for continuous Lagrangians \eqref{eq:ThetaMat}, discrete Lagrangians \eqref{eq:ThetaMatLd}, and in general treatments of Gaussian random fields (\cref{app:CondExpandVar})},
  sort=Theta
}
\newglossaryentry{psi}
{
  name={\ensuremath{\psi}},
  description={Element in the topological dual of a vector space (typically dual of $U$ or $B$)},
  sort=psi
}
\newglossaryentry{Rd}
{
  name={\ensuremath{\mathbb{R}^d}},
  description={Eucledian space of dimension $d$},
  sort=psi
}
\newglossaryentry{S}
{
  name={\ensuremath{S}},
  description={Action functional},
  sort=S
}
\newglossaryentry{Sd}
{
  name={\ensuremath{S_d}},
  description={Discrete action functional},
  sort=Sd
}
\newglossaryentry{M}
{
  name={\ensuremath{M}},
  description={Number of observed data points},
  sort=M
}
\newglossaryentry{hFill}
{
  name={\ensuremath{h_{\Omega_0}}},
  description={Fill distance of a finite subset $\Omega_0 \subset \overline{\Omega}$, see \cref{def:hFill}},
  sort=h
}
\newglossaryentry{WSobolev}
{
  name={\ensuremath{W^r(\Omega)}},
  description={Sobolev space $W^r(\Omega) = W^{r,2}(\Omega)$, see \cref{rem:AssumptionTrueForK}},
  sort=W
}
\newglossaryentry{Cm}
{
  name={\ensuremath{\mathcal{C}^m}},
  description={Space of $m$-times continuously differentiable functions. For a compact set $\overline{\Omega}$, where $\Omega$  is an open, bounded, non-empty domain in a Eucledian space, $\mathcal{C}^m(\overline{\Omega},\R^k)$ and $\mathcal{C}^m(\overline{\Omega})=\mathcal{C}^m(\overline{\Omega},\R^1)$ denote Banach spaces described in \eqref{eq:CmOmegabar}},
  sort=Cm
}
\newglossaryentry{Omega}
{
  name={\ensuremath{\Omega}},
  description={Domain, where data is collected or domain of definition of a (discrete) Lagrangian. Typically assumed to be open, bounded, non-empty. In the continuous Lagrangian setting, $(x,\dot x) \in \Omega \subset T\R^d \cong \R^d \times \R^d$. In the discrete setting, $(x_0,x_1),(x_1,x_2)\in \Omega \subset \R^d \times \R^d$},
  sort=Omega
}
\newglossaryentry{OmegaBar}
{
  name={\ensuremath{\overline{\Omega}}},
  description={Topological closure of the set $\Omega$. By the assumed properties of $\Omega$, $\overline{\Omega}$ is compact},
  sort=Omega
}
\newglossaryentry{Omega0}
{
  name={\ensuremath{\Omega_0}},
  description={Set of points in $\Omega$, for which observation data is available. Can be finite or countably infinite},
  sort=Omega
}
\newglossaryentry{Omega0hat}
{
  name={\ensuremath{\hat{\Omega}_0}},
  description={Set of observation data. (Only used in the context of discrete systems.)},
  sort=Omega
}
\definecolor{mod}{rgb}{0.0,0.0,0.0}     
\def\mod{\textcolor{mod}}
\definecolor{orcidlogocol}{HTML}{A6CE39}
\tikzset{
  orcidlogo/.pic={
    \fill[orcidlogocol] svg{M256,128c0,70.7-57.3,128-128,128C57.3,256,0,198.7,0,128C0,57.3,57.3,0,128,0C198.7,0,256,57.3,256,128z};
    \fill[white] svg{M86.3,186.2H70.9V79.1h15.4v48.4V186.2z}
                 svg{M108.9,79.1h41.6c39.6,0,57,28.3,57,53.6c0,27.5-21.5,53.6-56.8,53.6h-41.8V79.1z M124.3,172.4h24.5c34.9,0,42.9-26.5,42.9-39.7c0-21.5-13.7-39.7-43.7-39.7h-23.7V172.4z}
                 svg{M88.7,56.8c0,5.5-4.5,10.1-10.1,10.1c-5.6,0-10.1-4.6-10.1-10.1c0-5.6,4.5-10.1,10.1-10.1C84.2,46.7,88.7,51.3,88.7,56.8z};
  }
}
\newcommand\orcidicon[1]{\href{https://orcid.org/#1}{\mbox{\scalerel*{
\begin{tikzpicture}[yscale=-1,transform shape]
\pic{orcidlogo};
\end{tikzpicture}
}{|}}}}
\newtheorem{theorem}{Theorem}[section]
\newtheorem{lemma}[theorem]{Lemma}
\newtheorem{corollary}[theorem]{Corollary} 
\newtheorem{proposition}[theorem]{Proposition} 
\theoremstyle{definition}
\newtheorem{definition}[theorem]{Definition}
\newtheorem{example}[theorem]{Example}
\newtheorem{assumption}[theorem]{Assumption}
\theoremstyle{remark}
\newtheorem{remark}[theorem]{Remark}
\numberwithin{equation}{section}
\begin{document}

\def\d{\mathrm{d}}
\def\D{\mathrm{D}}
\def\p{\partial}
\def\R{\mathbb{R}}
\def\N{\mathbb{N}}
\def\Z{\mathbb{Z}}
\def\rf{{{\mathrm{ref}}}}
\def\DEL{{{\mathrm{DEL}}}}
\def\EL{{{\mathrm{EL}}}}
\def\dist{{{\mathrm{dist}}}}

\title[Learning of Lagrangians as Gaussian fields]{Machine learning of continuous and discrete variational ODEs with convergence guarantee and uncertainty quantification}


\author{Christian Offen ${\protect \orcidicon{0000-0002-5940-8057}}$}
\address{Paderborn University, Department of Mathematics,Warburger Str. 100, 33098 Paderborn, Germany}
\email{christian.offen@uni-paderborn.de}
\thanks{The author acknowledges the Ministry for Culture and Science of the State of North Rhine-Westphalia (MKW) in Germany and computing time provided by the Paderborn Center for Parallel Computing (PC2).}


\subjclass[2020]{34A55 (Primary) 35R30, 65L09, 37J99, 70F17 (Secondary)}

\date{}

\dedicatory{}

\begin{abstract}
	The article introduces a method to learn dynamical systems that are governed by Euler--Lagrange equations from data.
The method is based on Gaussian process regression and identifies continuous or discrete Lagrangians and is, therefore, structure preserving by design.
A rigorous proof of convergence as the distance between observation data points converges to zero \mod{and lower bounds for convergence rates are} provided.
Next to convergence guarantees, the method allows for quantification of model uncertainty, which can provide a basis of adaptive sampling techniques. We provide efficient uncertainty quantification of any observable that is linear in the Lagrangian, including of Hamiltonian functions (energy) and symplectic structures, which is of interest in the context of system identification.
The article overcomes major practical and theoretical difficulties related to the ill-posedness of the identification task of (discrete) Lagrangians through a careful design of geometric regularisation strategies and through an exploit of a relation to convex minimisation problems in reproducing kernel Hilbert spaces.

\end{abstract}

\maketitle

\section{Introduction}

The identification of models of dynamical systems from data is an important task in machine learning with applications in engineering, physics, and molecular biology. Data-driven models are required when explicit descriptions for the equations of motions of dynamical systems are either not known or analytic descriptions are too computationally complex for large scale simulations.
{This contribution focuses on structure-preserving machine learning of dynamical systems based on Gaussian process regression and Gaussian fields. The framework allows for a rigorous convergence analysis and numerically efficient uncertainty estimation. The proposed method is a Lagrangian-based data-driven model. Let us briefly contrast the approach to Hamiltonian data-driven models and other Lagrangian-based models.}


\subsection{Hamiltonian data-driven models}
Physics-based, data-driven modelling aims to exploit prior physical or geometric knowledge when developing data-driven surrogate models of dynamical systems. Recent activities have developed methods to learn Hamiltonian systems{, i.e.\ systems of the form}
\[
{\dot z = J^{-1}\nabla H(z),\;  J=\begin{pmatrix}
		0_{d \times d}&-1_{d \times d}\\1_{d \times d}&0_{d \times d}
\end{pmatrix} \quad H \colon \R^{2d}\to \R \, (\text{Hamiltonian})},
\]
or port-Hamiltonian systems from data by approximating the Hamiltonian, pseudo-{Hamiltonian structure}, or port-Hamiltonian structure by neural networks or Gaussian processes \cite{HNN, EIDNES2024112738,Bertalan2019, ortega2023learnability,symplecticShadowIntegrators,David2023}. Additionally, Lie group symmetries are identified in \cite{SymHNN}. Alternatively, the symplectic flow map of Hamiltonian systems can be approximated \cite{Rath2021,Tao2021,SympNets}.
The data-driven identification of interaction-based agent systems in \cite{Feng2003,Lu2019} or general Hamiltonian systems in \cite{DaiyingYin2024} employ similar statistical learning methods as in this article but in the context of Hamiltonian systems.
In contrast to the variational models considered in this article, Hamiltonian data-driven models mostly require prior knowledge of the symplectic phase space structure and observations of position and momenta, while the proposed Lagrangian-based methods only require observations of positions. \mod{However,} symplectic structures and Hamiltonians can be derived from a Lagrangian model in a post-processing step. Approaches based on identifying symplectic structures or canonical coordinates from data together with a Hamiltonian have been considered, for instance, in \cite{Bertalan2019,chen2023NeuralSymplecticForm}. However, these do not provide a systematic discussion of uncertainty quantification or regularisation of this ill-posed inverse problem \mod{or provide theoretical convergence guarantees.}

\subsection{Continuous Lagrangian data-driven models}
Similarly to Hamiltonian data-driven models, variational principles for dynamical systems have been identified from data by identifying a Lagrangian function of the system \cite{LNN,LagrangianShadowIntegrators,evangelisticdc2022,SymLNN}.
{We refer to \cite{Marsden1999,Arnold1989} for an introduction to Lagrangian mechanics.}
To recall briefly, a dynamical system is governed by a {\em variational principle} or a {\em least action principle}, if motions constitute critical points of an action functional. In case of an autonomous first-order time-dependent system, the action functional is of the form
\begin{equation}\label{eq:SContinuous}
	\gls{S}(x) = \int_{t_0}^{t_1} \gls{L}(x(t),\dot x(t)) \d t,
\end{equation}
where $x \colon [t_0,t_1] \to \gls{Rd}$ is a curve with derivative denoted by $\dot x$. The function $L$ is a {\em Lagrangian}.
A function $x \colon [t_0,t_1] \to \R^d$ is a solution or {\em motion} if the action $S$ is stationary at $x$ for all variations $\delta x \colon [t_0,t_1] \to \R^d$ that fix the endpoints $t_0,t_1$. Regularity assumptions on $L$ and $x$ provided, this is equivalent to the condition that $x$ fulfils the Euler-Lagrange equations
\begin{equation}\label{eq:ELODE}
\mathrm{EL}(L)(x(t),\dot x(t),\ddot x(t))=0, \qquad t \in (t_0,t_1)
\end{equation} with
\begin{equation}\label{eq:ELContinuous}
	\begin{split}
	\gls{EL}(L) = \frac{\d }{\d t} \left( \frac{\p L}{\p {\dot x}} \right) - \frac{\p L}{\p x}
	= {\frac{\p^2 L}{\p \dot x \p \dot x} \ddot{x} 
	+ \frac{\p^2 L}{\p \dot x \p x} \dot{x} }
	- \frac{\p L}{\p x}.
	\end{split}
\end{equation}
Here, {$\frac{\p^2 L}{\p \dot x \p \dot x} = \left( \frac{\p^2 L}{\p \dot x^k \p \dot x^l} \right)_{k,l=1}^d$, $\frac{\p^2 L}{\p \dot x \p  x} = \left( \frac{\p^2 L}{\p \dot x^k \p x^l} \right)_{k,l=1}^d$} refer to $d \times d$-dimensional blocks of the Hessian of $L$ and $\frac{\p L}{\p x}$ denotes the gradient.
Details may be found in \cite{gelfand2000calculus,RoubicekCalculusofVariations}, for instance.

In the data-driven context, $L$ is sought as a function of $\gls{xbar}=(x,\dot x)$ such that \eqref{eq:ELODE} is fulfilled at observed data points
$\left\{(x,\dot x, \ddot x)\right\}_{j=1}^{\gls{M}}$.
Once $L$ is known, \eqref{eq:ELODE} can be solved with a numerical method such as a variational integrator \cite{MarsdenWestVariationalIntegrators}.

\subsection{Discrete Lagrangian data-driven models}\label{Intro:DiscreteLagrangianModel}
Instead of learning continuous variational principles, in \cite{Qin2020} Qin proposes to learn discrete Lagrangian theories by approximating discrete Lagrangians. 
In discrete Lagrangian theories, motions $x(t)$ are described at discrete, equidistant times $t^0 < t^1 < \ldots < t^N$ by a sequence of snapshots ${\boldsymbol x} = (x_k)_{k =0}^N \subset \R^d$. The motions constitute stationary points of a discrete action functional
\[
\gls{Sd}({\boldsymbol x}) = \sum_{k=1}^N \gls{Ld}(x_{k-1},x_k)
\]
with respect to discrete variations of the interior points $x_1,\ldots,x_{N-1}$. In other words, ${\boldsymbol x}$ is a solution of the discrete field theory if $\frac{\p S_d}{\p x_k}({\boldsymbol x}) =0$ for all $1\le k <N$. This is equivalent to the discrete Euler--Lagrange equation \begin{equation}\label{eq:DELEQIntro}
	\mathrm{DEL}(L_d)(x_{k-1},x_k,x_{k+1}) =0, \quad 1 \le k <N
\end{equation} with
\begin{equation}\label{eq:DELIntro}
\gls{DEL}(L_d)(x_{k-1},x_k,x_{k+1})
= \gls{nabla2} L_d(x_{k-1},x_k) + \gls{nabla1} L_d (x_k,x_{k+1}).
\end{equation}
Here $\nabla_1 L_d$ and $\nabla_2 L_d$ denote the partial derivatives with respect to the first or second component of $L_d$, respectively. Details on discrete mechanics can be found in \cite{MarsdenWestVariationalIntegrators}.

For the identification of discrete Lagrangians from data, training data $\{x(t^{k})\}_k$ consists of snapshots of motions of the dynamical system at discrete time-steps $t^k$.
This needs to be contrasted to training of continuous Lagrangians which requires observations of first and second order derivatives of solutions, i.e.\ data of the form $\gls{xhat} = (x,\dot x, \ddot x)$.

The class of discrete Lagrangian systems is expressive enough to describe motions of continuous Lagrangian systems on bounded open subsets of $\R^d$ at the snapshot times $(t^k)_k$ exactly, i.e.\ without discretisation error, provided the step-size $\Delta t = t^{k+1}-t^k$ is small enough, see \cite[§1.6]{MarsdenWestVariationalIntegrators}. Thus, identifying $L_d$ instead of $L$ is fully justified from a modelling viewpoint.
In case a continuous Lagrangian is required for system identification tasks or highly accurate predictions of velocity data, in the article \cite{LagrangianShadowIntegrators} the author provides a method based on Vermeeren's variational backward error analysis \cite{Vermeeren2017} to recover continuous Lagrangians from data-driven discrete Lagrangians as a power series in the step-size \mod{$\Delta t$} of the time-grid.

\subsection{Ambiguity of Lagrangians}
The data-driven identification of a continuous or discrete Lagrangian density is an ill-defined inverse problem as many different Lagrangian densities can yield equations of motions with the same set of solutions.
This \mod{constitutes} a challenge in a machine learning context and can lead to badly conditioned identified models that amplify errors \cite{LagrangianShadowIntegrators}.
In \cite{DLNNPDE,DLNNDensity} the author develops regularisation strategies that optimise numerical conditioning of the learnt theory, when the Lagrangian density is modelled as a neural network. The present article relates to Gaussian fields to allow for efficient uncertainty quantification and a theoretical convergence analysis.

\subsection{Novelty}\label{sec:novelty}
The article
\begin{enumerate}
	\item introduces a method to learn continuous and discrete Lagrangians from data based on Gaussian process regression with 
	\begin{itemize}
		\item a rigorous convergence analysis as the distance between data points converges to zero
		\item \mod{and lower bounds on the convergence rates, depending on the smoothness of the dynamics and kernels.}
	\end{itemize}

\item Moreover, the article systematically discusses the ambiguity or Lagrangians and {regularisation} strategies for kernel-based learning methods for Lagrangians.

\item Furthermore, the article provides a statistical framework that allows for {efficient} uncertainty quantification of any linear observable of the dynamical system, such as Hamiltonian functions (energy) or symplectic structure, for instance.
The uncertainty quantification does not require sampling but only to solve linear systems of equations.

\end{enumerate}

This needs to be contrasted to aforementioned methods of the literature for learning Lagrangians, for which convergence guarantees are not provided or which do not provide uncertainty quantification of linear observables.
\mod{We will prove convergence of our inferred (discrete) Lagrangians to a limit that constitutes a true (discrete) Lagrangian of the underlying dynamical system in a reproducing kernel Hilbert space and in $\mathcal{C}^2$ ($\mathcal{C}^1$ in the discrete case) as the maximal distance between observed data points $h$ (fill distance) tends to zero.}
\mod{Moreover, lower bounds for convergence rates are proved:}
\mod{in case of continuous Lagrangian models, when the acceleration field of the underlying dynamics is at least $r$ times continuously differentiable, $r>2+d$ for $d$ the dimension of position data, and the model's kernel is sufficiently regular, then the learned dynamics (the Euler--Lagrange equations or the acceleration field away from non-degeneracies) converges to the true dynamics at least as fast as $h^r$. Similar results are proved for the discrete Lagrangian case.}

In the literature discussions on removing ambiguity of Lagrangians in data-driven identification are mostly absent: its necessity is sometimes avoided by assuming that torques are observed \cite{evangelisticdc2022}, an explicit mechanical ansatz is used \cite{Aoshima2021}. In other works regularisation is done implicitly without discussion \cite{LNN}, ad hoc as in the author's prior work \cite{LagrangianShadowIntegrators}, or relates to neural networks \cite{SymLNN,DLNNPDE,DLNNDensity} only.

Methodologically, the method of the present article stands in the context of meshless collocation methods \cite{SchabackWendland2006} for solving linear partial differential equations since it solves \eqref{eq:ELContinuous} for $L$.
It overcomes the major technical difficulty to prove convergence even though the Lagrangian density is {\em not} unique even after regularisation.
For this, the article exploits a relation between posterior means of Gaussian processes and constraint optimisation problems in reproducing kernel Hilbert spaces that was presented in a game theory context by Owhadi and Scovel in \cite{OwhadiScovel2019} and was employed to solve well-posed partial differential equations using Gaussian Processes in \cite{OwhadiLearningPDEGP}.
\mod{Moreover, interpolation and smoothening theory \cite{Arcangeli2007,Narcowich2005,Wendland2005} is applied to prove the aforementioned lower bounds for convergence rates.}


\subsection{Outline}
The article proceeds as follows: \Cref{sec:background} continues the review of continuous and discrete variational principles that was started in the introduction. Moreover, it presents symplectic structure and Hamiltonians as linear observables of Lagrangian systems and it reviews the ambiguity of Lagrangians. \Cref{sec:Normalisation} introduces methods to regularise the inverse problem of finding Lagrangian densities given dynamical data. In \cref{sec:MLSetting} we {briefly} review reproducing kernel Hilbert spaces and {aspects of} Gaussian {fields}. {A more detailed discussion of the underlying theoretical concepts is provided in \cref{app:GaussianField}. The section proceeds with an introduction of} our method to learn continuous and discrete Lagrangians and to provide uncertainty quantifications for linear observables.
\Cref{sec:Experiment} contains numerical experiments including identification of a Lagrangian and Hamiltonian for the coupled harmonic oscillator and convergence tests.
\Cref{sec:ConvergenceAnalysis} provides a theoretical convergence analysis of the method including a proof of the method's convergence. \mod{Moreover, lower bounds for} convergence rates are derived \mod{in \cref{sec:ConvergenceRates}}.
The article concludes with a summary in \cref{sec:Summary}.
\mod{A list of notation with reoccurring symbols is supplied at the end of the article as a glossary. 
}

\section{Background - Lagrangian dynamics}\label{sec:background}

\subsection{Continuous Lagrangian theories}
\subsubsection{Definition of associated Hamiltonian and symplectic structure}\label{sec:DefHamSymplMmVol}
Let us continue our review of Lagrangian dynamics to fix notations and to explain the ambiguity that is inherent in the inverse problem of identifying (discrete) Lagrangians to observed motions.
We postpone a provision of a more detailed functional analytic settings to the convergence analysis of \cref{sec:ConvergenceAnalysis} and refer to the literature on variational calculus \cite{gelfand2000calculus,RoubicekCalculusofVariations} for details.

We consider the Hamiltonian to a Lagrangian defined via
\begin{equation}\label{eq:HamOperator}
	\gls{Ham}(L)(x,\dot x) = \dot{x}^\top \frac{\p L}{\p \dot x}(x,\dot x) - L(x,\dot x).
\end{equation}
Here $\dot x^\top$ denotes the transpose of $\dot x \in \R^d$.
The Hamiltonian $\mathrm{Ham}(L)$ is conserved along solutions of \eqref{eq:ELODE}. Moreover, we consider the symplectic structure related to $L$ which is given as the closed differential 2-form
\begin{equation}\label{eq:SymplecticStructure}
	\gls{Sympl}(L) = \sum_{s=1}^d \d x^s \wedge \d \left(\frac{\p L}{\p \dot x^s}\right) 
	=\sum_{s,r=1}^d  \mod{\Bigg(}\frac{\p^2 L}{\p  x^r \p \dot x^s} \d x^s \wedge \d x^r 
	+   \frac{\p^2 L}{\p \dot x^r \p \dot x^s} \d  x^s \wedge \d \dot x^r\mod{\Bigg)}.
\end{equation}
When $\frac{\p^2 L}{\p \dot x \p \dot x}$ is invertible everywhere, then the differential form $\mathrm{Sympl}(L)$ is non-degenerate and, therefore, a symplectic form.\footnote{$\mathrm{Sympl}(L)$ is the pull-back of the canonical symplectic form $\sum_{s=1}^{d} \d q^s \wedge \d p_s$ under the Legendre transform $T\R^d \to T^\ast \R^d$, $(x,\dot x) \mapsto (q,p)=(x,\frac{\p L}{\p \dot x}(x,\dot x))$.}
As an aside, the motions \eqref{eq:ELODE} can be described as Hamiltonian motions to the Hamiltonian $\mathrm{Ham}(L)$ and symplectic structure $\mathrm{Sympl}(L)$.
Moreover, we consider the induced \mod{conjugate} momenta
\begin{equation}\label{eq:MmDef}
	\gls{Mm}(L)(x,\dot x) = \frac{\p L}{\p \dot x}(x,\dot x).
\end{equation}
Additionally, we consider the induced Liouville volume form given as the $d$th exterior power of $\mathrm{Sympl}(L)$
\begin{equation}\label{eq:VolDef}
	\gls{Vol}(L) = \frac{1}{d!} (\mathrm{Sympl}(L))^d  =  \det\left(\frac{\p^2 L}{\p \dot x^r \p \dot x^s} \right) \d x^1 \wedge \d \dot x^1 \wedge \ldots \wedge \d x^d \wedge \d \dot x^d.
\end{equation}

It will be of significance later that $\mathrm{EL}$, $\mathrm{Ham}$, $\mathrm{Sympl}$, $\mathrm{Mm}$ are linear in the Lagrangian $L$, while $\mathrm{Vol}$ is not.

{%
	\begin{example}\label{ex:MechanicalL}
	Consider a mechanical Lagrangian $L(x,\dot x) = \frac 12 \dot x^\top \mod{\Lambda} \dot x - V(x)$ for a continuously differentiable potential $V \colon \R^d \to \R$ and a symmetric, positive definite matrix $\mod{\Lambda}$ (mass matrix).
	The equations of motions are $0=\mathrm{EL}(L)(x,\dot x,\ddot x) = \mod{\Lambda}\ddot{x} + \nabla V(x)$, where $\nabla V = \frac{\p V}{\p x}$ denotes the gradient of $V$.
	The conjugate momentum is $p :=\mathrm{Mm}(L)(x,\dot x)=\mod{\Lambda} \dot x$. The Hamiltonian function is $H(x,p) = \mathrm{Ham}(L)(x,\mod{\Lambda}^{-1} p) = \frac 12 p^\top \mod{\Lambda}^{-1} p + V(x)$. The symplectic form is $\omega = \mathrm{Symp}(L) = \sum_{s=1}^d \d x^s \wedge \d p^s$. In the frame induced by the coordinates $(x,p)$ of the phase space the symplectic form is represented by the block matrix  \[J=\begin{pmatrix}
		0_{n \times n} & -1_{n \times n} \\
		1_{n \times n} & 0_{n \times n}
	\end{pmatrix}.\]
	Here $0_{n \times n}$ and $1_{n \times n}$ denote the zero and the identity matrix of size $n \times n$, respectively.
	In the coordinates $(x,p)$, the equations of motions are Hamilton's equations in their standard form
	\[
	\begin{pmatrix}
		\dot x \\ \dot p
	\end{pmatrix}	
	= J^{-1} \nabla H(x,p)
	= \begin{pmatrix}
		\mod{\Lambda}^{-1}p \\ - \nabla V(x)
	\end{pmatrix}	
	\]
	The volume form $\mathrm{Vol}(L) = \det(\mod{\Lambda}) \d x^1 \wedge \d \dot x^1 \wedge \ldots \wedge \d x^d \wedge \d \dot x^d
	= \d x^1 \wedge \d p^1 \wedge \ldots \wedge \d x^d \wedge \d p^d$ is the standard Euclidean volume form on the phase space.
	\end{example}%
}

\subsubsection{Ambiguity of Lagrangian densities}\label{sec:AmbiguityContinuous}

The ambiguity of Lagrangians in the description of variational dynamical systems has been the subject of various articles in theoretical physics including \cite{HENNEAUX198245,Marmo1987,MARMO1989389}.
Lagrangians can be ambiguous in two different ways: 
\begin{enumerate}
	
	\item\label{it:LEquivalent} Lagrangians $L$ and $\tilde L$ can yield the same Euler--Lagrange operator \eqref{eq:ELContinuous} up to rescaling, i.e.\ 
	\[
	\rho\mathrm{EL}(L) =    \mathrm{EL}(\tilde{L}), \quad \rho \in \R\setminus \{0\}
	\]
	and, therefore, the same Euler--Lagrange equations \eqref{eq:ELODE} up to rescaling. We call $L$ and $\tilde L$ {\em (gauge-) equivalent}.
	For equivalent Lagrangians $L$, $\tilde L$ there exists $\rho \in \R \setminus \{0\}$, $c \in \R$ such that $\tilde L - \rho  L -c$ is a total derivative
	\[
	\tilde L - \rho L -c = \d_t F
	\]
	for a continuously differentiable function $F\colon \R^d \to \R$, where
	\begin{equation}
		\d_t F(x,\dot x)
		= \dot x^\top \nabla F(x)
		= \sum_{s=1}^d \dot x^s \frac{\p F}{\p x^s}(x)
	\end{equation}
	(See, e.g.\ \cite{gelfand2000calculus}.)
	We have restricted ourselves to autonomous Lagrangians.
	
	
	\item More generally, two Lagrangians $L$ and $\tilde L$ can yield the same set of solutions $x$, i.e.
	\[
	\mathrm{EL}(L)(x(t),\dot x(t)),\ddot{x}(t)) =0
	\iff \mathrm{EL}(\tilde L)(x(t),\dot x(t)),\ddot{x}(t)) =0
	\]
	for all regular curves $x\colon [t_0,t_1] \to \R^d$ even when they are {\em not} equivalent in the sense of \cref{it:LEquivalent}. In such a case, $\tilde{L}$ is called an {\em alternative Lagrangian} to $L$.
	\begin{example}[Affine linear motions]\label{ex:LinearMotions}
		For any twice differentiable $g \colon \R^d \to \R$ with nowhere degenerate Hessian matrix $\mathrm{Hess}(g)$, the Lagrangian $L(x,\dot x) = g(\dot x)$ describes affine linear motions in $\R^d$:
		\[
		0 = \mathrm{EL}(L) = \mathrm{Hess}(g)(\dot x)\ddot x.
		\]
		\mod{For instance, for $d=2$, $g(\dot x) = \frac 12((\dot x^1)^2+(\dot x^2)^2)$ and $g(\dot x) = \frac 12((\dot x^1)^2-(\dot x^2)^2)$ yield non-equivalent alternative Lagrangians.}
	\end{example}

\end{enumerate}

In general, the existence of alternative Lagrangian densities is related to additional geometric structure and conserved quantities of the system \cite{HENNEAUX198245,Marmo1987,MARMO1989389,Carinena1983}. This article mainly considers ambiguities by equivalence, which are exhibited by all variational systems.


\begin{lemma}\label{lem:TrafoGeomQuantities}
	Let $L$ be a Lagrangian depending on $(x,\dot x)$. Consider a continuously differentiable $F \colon \R^d \to \R$, $\rho \in \R$, $c \in \R$, and $\tilde L = \rho L + \d_t F + c$. We have
	\begin{align*}
		\mathrm{EL}(\tilde L) &= \rho \mathrm{EL}(L)\\
		\mathrm{Mm}(\tilde L) &= \rho \mathrm{Mm}(L) + \nabla F\\
		\mathrm{Sympl}(\tilde L) &= \rho \mathrm{Sympl}(L)\\
		\mathrm{Vol}(\tilde L) &= \rho^d \mathrm{Vol}(L)\\
		\mathrm{Ham}(\tilde L) &= \rho \mathrm{Ham}(L)-c
	\end{align*}
	Here $\nabla F$ denotes the gradient of $F$. Moreover, if $\rho \not =0$ then
	\begin{equation}\label{eq:NonDegeneratePtsInvariant}
		\left\{(x,\dot x) : \det \left( \frac{\p^2 L}{\p \dot x \p \dot x} \right)(x,\dot x) \not =0 \right\} = 
		\left\{(x,\dot x) : \det \left( \frac{\p^2 \tilde L}{\p \dot x \p \dot x} \right)(x,\dot x) \not =0\right\}.
	\end{equation}
\end{lemma}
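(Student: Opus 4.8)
The plan is to exploit the linearity of $\mathrm{EL}$, $\mathrm{Mm}$, $\mathrm{Sympl}$, and $\mathrm{Ham}$ in the Lagrangian (observed just after \eqref{eq:VolDef}) to split the action of each operator on $\tilde L = \rho L + \d_t F + c$ into three independent contributions: one from $\rho L$, one from the total derivative $\d_t F$, and one from the constant $c$. By homogeneity the contribution from $\rho L$ equals $\rho$ times the operator applied to $L$, so the first, second, third, and fifth displayed identities reduce to computing the action of each linear operator on the two remaining pieces $\d_t F$ and $c$. The operator $\mathrm{Vol}$ is not linear and will be treated separately at the end.

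The single computation that drives every case is $\frac{\p}{\p \dot x}(\d_t F)(x,\dot x) = \frac{\p}{\p \dot x}\big(\dot x^\top \nabla F(x)\big) = \nabla F(x)$, which depends on $x$ alone. From this I would read off in turn: $\mathrm{Mm}(\d_t F) = \nabla F$ and $\mathrm{Mm}(c) = 0$ directly from \eqref{eq:MmDef}, giving the momentum identity; $\mathrm{Ham}(\d_t F) = \dot x^\top \nabla F - \d_t F = 0$ and $\mathrm{Ham}(c) = -c$ from \eqref{eq:HamOperator}, giving the Hamiltonian identity; and, using the expanded form \eqref{eq:ELContinuous}, $\mathrm{EL}(\d_t F) = 0$ because the block $\frac{\p^2 (\d_t F)}{\p \dot x \p \dot x}$ vanishes while the remaining terms $\frac{\p^2 (\d_t F)}{\p \dot x \p x}\dot x = \mathrm{Hess}(F)\dot x$ and $\frac{\p (\d_t F)}{\p x} = \mathrm{Hess}(F)\dot x$ cancel by symmetry of the Hessian; together with $\mathrm{EL}(c) = 0$ this gives the Euler--Lagrange identity. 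For the symplectic form I would note that $\d\big(\frac{\p F}{\p x^s}\big) = \sum_r \frac{\p^2 F}{\p x^r \p x^s}\,\d x^r$ carries no $\d \dot x$ component, so that $\mathrm{Sympl}(\d_t F) = \sum_{s,r} \frac{\p^2 F}{\p x^r \p x^s}\, \d x^s \wedge \d x^r = 0$ by antisymmetry of the wedge product against the symmetric Hessian, while $\mathrm{Sympl}(c) = 0$ is immediate.

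For the non-linear operator $\mathrm{Vol}$ I would argue directly from the determinant expression in \eqref{eq:VolDef}. Since $\frac{\p^2}{\p \dot x \p \dot x}(\d_t F) = 0$ and $\frac{\p^2}{\p \dot x \p \dot x}(c) = 0$, the velocity Hessian transforms homogeneously, $\frac{\p^2 \tilde L}{\p \dot x \p \dot x} = \rho\, \frac{\p^2 L}{\p \dot x \p \dot x}$. Taking determinants of these $d \times d$ blocks and using $\det(\rho A) = \rho^d \det(A)$ yields $\mathrm{Vol}(\tilde L) = \rho^d \mathrm{Vol}(L)$; for $\rho \neq 0$ the same relation gives $\det\big(\frac{\p^2 \tilde L}{\p \dot x \p \dot x}\big) \neq 0 \iff \det\big(\frac{\p^2 L}{\p \dot x \p \dot x}\big) \neq 0$, which is exactly \eqref{eq:NonDegeneratePtsInvariant}.

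Once the observation $\frac{\p}{\p \dot x}(\d_t F) = \nabla F(x)$ is in place, the proof is essentially bookkeeping and I do not expect a genuine obstacle. The only two points needing care are the antisymmetry cancellation establishing $\mathrm{Sympl}(\d_t F) = 0$ and the fact that $\mathrm{Vol}$ must be handled outside the linearity argument; both hinge on the vanishing of the mixed and second velocity derivatives of the total derivative $\d_t F$.
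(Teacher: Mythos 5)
Your proposal is correct and follows essentially the same route as the paper, whose proof simply invokes a direct computation for the transformation rules and obtains \eqref{eq:NonDegeneratePtsInvariant} from $\frac{\p^2 \tilde L}{\p \dot x \p \dot x} = \rho\,\frac{\p^2 L}{\p \dot x \p \dot x}$ (equivalently from the $\mathrm{Vol}$ rule) — exactly the computations you spell out, including the key observation $\frac{\p}{\p \dot x}(\d_t F) = \nabla F(x)$ and the Hessian-symmetry cancellations for $\mathrm{EL}$ and $\mathrm{Sympl}$. The only cosmetic caveat is that your cancellation arguments implicitly use $F \in \mathcal{C}^2$ rather than just $\mathcal{C}^1$, a regularity point the paper glosses over as well.
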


\begin{proof}
	The transformation rules of EL, Mm, Sympl, Vol, and Ham are obtained by a direct computation. 
	The assertion \eqref{eq:NonDegeneratePtsInvariant} follows from the transformation rule for $\mathrm{Vol}$ or directly by observing that $\frac{\p^2 \tilde L}{\p \dot x \p \dot x} = \rho \frac{\p^2 L}{\p \dot x \p \dot x}$.
\end{proof}

The following \namecref{cor:NonDegInvariant} is a restatement of \eqref{eq:NonDegeneratePtsInvariant}.

\begin{corollary}\label{cor:NonDegInvariant}
	The set where a Lagrangian $L$ is non-degenerate, i.e.\ where $\frac{\p^2 L}{\p \dot x \p \dot x}$ is invertible, is invariant under equivalence. 
\end{corollary}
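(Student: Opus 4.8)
The plan is to read the statement straight off \cref{lem:TrafoGeomQuantities}, since the corollary only repackages \eqref{eq:NonDegeneratePtsInvariant} in the vocabulary of equivalence. First I would unfold the notion of (gauge-)equivalence from \cref{sec:AmbiguityContinuous}: two Lagrangians $L$ and $\tilde L$ are equivalent precisely when there exist $\rho \in \R \setminus \{0\}$, $c \in \R$, and a continuously differentiable $F \colon \R^d \to \R$ with $\tilde L = \rho L + \d_t F + c$. The crucial feature of this definition is that the scaling factor $\rho$ is required to be nonzero, which is exactly the hypothesis under which the final assertion \eqref{eq:NonDegeneratePtsInvariant} of the lemma is available.

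The one computational point I would make explicit (it already sits inside the lemma) is why the velocity Hessian is insensitive to the total-derivative and constant terms. Since $\d_t F(x,\dot x) = \dot x^\top \nabla F(x)$ is affine-linear in $\dot x$, differentiating twice in $\dot x$ annihilates it, $\frac{\p^2}{\p \dot x \p \dot x}(\d_t F) \equiv 0$, and the additive constant $c$ likewise vanishes under two velocity derivatives. Hence the velocity Hessian transforms purely by the scalar $\rho$, namely $\frac{\p^2 \tilde L}{\p \dot x \p \dot x} = \rho\, \frac{\p^2 L}{\p \dot x \p \dot x}$, pointwise in $(x,\dot x)$.

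Finally, because $\det(\rho A) = \rho^d \det(A)$ and $\rho \neq 0$, the matrix $\frac{\p^2 \tilde L}{\p \dot x \p \dot x}(x,\dot x)$ is invertible if and only if $\frac{\p^2 L}{\p \dot x \p \dot x}(x,\dot x)$ is, at every point $(x,\dot x)$. This is exactly the equality of non-degeneracy loci recorded in \eqref{eq:NonDegeneratePtsInvariant}, and it expresses that the set where $L$ is non-degenerate is preserved upon passing to any equivalent Lagrangian. I anticipate essentially no obstacle here: all the analytic content already resides in \cref{lem:TrafoGeomQuantities}, and the sole item to verify is the bookkeeping that equivalence indeed supplies a nonzero $\rho$, so that the factor $\rho^d$ cannot make a nonzero determinant vanish.
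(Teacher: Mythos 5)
Your proof is correct and follows the same route as the paper: the paper presents the corollary as a restatement of \eqref{eq:NonDegeneratePtsInvariant} from \cref{lem:TrafoGeomQuantities}, whose proof likewise observes directly that $\frac{\p^2 \tilde L}{\p \dot x \p \dot x} = \rho\, \frac{\p^2 L}{\p \dot x \p \dot x}$ with $\rho \neq 0$. Your explicit bookkeeping (the velocity Hessian annihilates $\d_t F$ and $c$, and $\det(\rho A)=\rho^d\det(A)$) simply spells out what the paper leaves implicit.
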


\subsection{Discrete Lagrangian systems}

\subsubsection{Associated symplectic structure}

In analogy to the continuous case (\cref{sec:DefHamSymplMmVol}) we define associated data to a discrete Lagrangian density $L_d \colon \R^d \times \R^d \to \R$ following definitions in discrete variational calculus \cite{MarsdenWestVariationalIntegrators}.
The quantities
\begin{align}\label{eq:MmMinus}
	\gls{MmMinus}(L_d)(x_j,x_{j+1}) &= -\nabla_1 L_d (x_j,x_{j+1})\\ \label{eq:MmPlus}
	\gls{MmPlus}(L_d)(x_{j-1},x_j) &= \nabla_2 L_d (x_{j-1},x_j)
\end{align}
relate to discrete conjugate momenta at time $t_j$. On motions ${\boldsymbol x} =(x_k)_{k=0}^N$ that fulfil \eqref{eq:DELEQIntro}, $\mathrm{Mm}^-(x_k,x_{k+1})$ and $\mathrm{Mm}^+(x_{k-1},x_k)$ coincide for all $1\le k<N$.
Moreover, denoting the coordinate of the domain of definition $\R^d \times \R^d$ of $L_d$ by $(x_0,x_1)$ we define the 2-form
\begin{equation}\label{eq:SymplDiscrete}
	\begin{split}
		\gls{Sympl}(L_d)	&= \sum_{r,s=1}^{d}
		\frac{\p^2 L_d}{\p x_1^s \p x_0^r} \d x_1^s \wedge \d x_0^r
	\end{split}
\end{equation}
and its $d$th exterior power normalised by $\frac 1 {d!}$
\begin{equation}\label{eq:VolDefLd}
	\gls{Vol}(L_d) = \det \left(\frac{\p^2 L_d}{\p x_1 \p x_0}\right) \d x_1^1 \wedge \d x_0^1 \wedge \ldots \wedge \d x_1^d \wedge \d x_0^d.
\end{equation}

When $\frac{\p^2 L_d}{\p x_1 \p x_0}$ is non-degenerate everywhere, then $\mathrm{Sympl}(L_d)$ is a symplectic form
and $\mathrm{Vol}(L_d)$ its induced volume form on the discrete phase space $\R^d \times \R^d$.
$\mathrm{Sympl}(L_d)$ is called {\em discrete Lagrangian symplectic form} in \cite[§1.3.2]{MarsdenWestVariationalIntegrators}.
(For consistency with the continuous theory \cref{sec:DefHamSymplMmVol} our sign convention differs from \cite[§1.3.2]{MarsdenWestVariationalIntegrators}. A derivation can be found in \cref{app:SymplVolLd}.)

\subsection{Ambiguity of discrete Lagrangians}

In analogy to \cref{sec:AmbiguityContinuous},
if $L_d$ is a discrete Lagrangian and $\tilde L_d(x_0,x_1) = \rho L_d(x_0,x_1) + F(x_1)-F(x_0) + c$ for $c \in \R$, $\rho \in \R \setminus \{0\}$, and continuously differentiable $F$, then
\[
\rho \mathrm{DEL}(L_d) =  \mathrm{DEL}(\tilde L_d)
\]
and $L_d$ and $\tilde L_d$ are called {\em (gauge-) equivalent}. 
Non-equivalent discrete Lagrangians such that the discrete Euler--Lagrange equations \eqref{eq:DELEQIntro} have the same solutions are called {\em alternative {discrete} Lagrangians}.

The analogy of \cref{lem:TrafoGeomQuantities} for discrete Lagrangians is as follows.

\begin{lemma}\label{lem:TrafoGeomQuantitiesDiscrete}
	Let $L_d$ be a {discrete} Lagrangian depending on $(x_0,x_1)$. Consider a continuously differentiable $F \colon \R^d \to \R$, $\rho \in \R $, $c \in \R$, and $\tilde L_d = \rho L_d + \Delta_t F + c$ with $\Delta_t F(x_0,x_1) = F(x_1)-F(x_0)$. We have
	\begin{align*}
		\mathrm{DEL}(\tilde L_d) &= \rho \mathrm{DEL}(L_d)\\
		\mathrm{Mm}^-(\tilde L_d)(x_0,x_1) &= \rho \mathrm{Mm}^-(L_d)(x_0,x_1) + \nabla F(x_0)\\
		\mathrm{Mm}^+(\tilde L_d)(x_0,x_1) &= \rho \mathrm{Mm}^+(L_d)(x_0,x_1) + \nabla F(x_1)\\
		\mathrm{Sympl}(\tilde L_d) &= \rho \mathrm{Sympl}(L_d)\\
		\mathrm{Vol}(\tilde L_d) &= \rho^d \mathrm{Vol}(L_d)
	\end{align*}
	Here $\nabla F$ denotes the gradient of $F$. Moreover, if $\rho \not =0$ then
	\[
	\left\{(x_0,x_1) : \det \left( \frac{\p^2 L_d}{\p x_0 \p x_1} \right)(x_0,x_1) \not =0 \right\}
	= 
	\left\{(x_0,x_1) : \det \left( \frac{\p^2 \tilde L_d}{\p x_0 \p x_1} \right)(x_0,x_1) \not =0 \right\}.
	\]
\end{lemma}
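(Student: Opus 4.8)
The plan is to prove each of the five transformation rules by direct computation, exploiting the linearity of the operators $\mathrm{DEL}$, $\mathrm{Mm}^-$, $\mathrm{Mm}^+$, and $\mathrm{Sympl}$ in the discrete Lagrangian, together with an explicit evaluation of how each operator acts on the three additive pieces of $\tilde L_d = \rho L_d + \Delta_t F + c$. Since $\Delta_t F(x_0,x_1) = F(x_1) - F(x_0)$ separates into a function of $x_1$ minus a function of $x_0$, its mixed partials and the resulting discrete momenta are completely transparent, and the constant $c$ contributes nothing to any derivative-based operator. The main point is therefore to show that the gauge term $\Delta_t F$ lies in the kernel of $\mathrm{DEL}$ (so it drops out) but contributes a pure gradient shift to the momentum expressions, exactly as in the continuous \cref{lem:TrafoGeomQuantities}.

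First I would handle $\mathrm{DEL}$. Applying \eqref{eq:DELIntro} to $\Delta_t F$, one computes
\[
\mathrm{DEL}(\Delta_t F)(x_0,x_1,x_2) = \nabla_2\bigl(F(x_1)-F(x_0)\bigr) + \nabla_1\bigl(F(x_2)-F(x_1)\bigr) = \nabla F(x_1) - \nabla F(x_1) = 0,
\]
and $\mathrm{DEL}(c)=0$ trivially, so by linearity $\mathrm{DEL}(\tilde L_d) = \rho\,\mathrm{DEL}(L_d)$. Next I would treat the two momenta using \eqref{eq:MmMinus} and \eqref{eq:MmPlus}: since $\mathrm{Mm}^-(L_d)(x_0,x_1) = -\nabla_1 L_d(x_0,x_1)$ and the $x_0$-dependence of $\Delta_t F$ is only through $-F(x_0)$, we get $\mathrm{Mm}^-(\Delta_t F)(x_0,x_1) = -\nabla_1(F(x_1)-F(x_0)) = \nabla F(x_0)$; symmetrically $\mathrm{Mm}^+(\Delta_t F)(x_0,x_1) = \nabla_2(F(x_1)-F(x_0)) = \nabla F(x_1)$. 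Linearity then yields the stated shifts $\rho\,\mathrm{Mm}^\mp(L_d) + \nabla F(x_0)$ and $\rho\,\mathrm{Mm}^+(L_d) + \nabla F(x_1)$. For $\mathrm{Sympl}$, I would use \eqref{eq:SymplDiscrete}: the mixed Hessian $\frac{\p^2}{\p x_1^s \p x_0^r}$ annihilates any separable term $F(x_1)-F(x_0)$ and the constant $c$, so $\mathrm{Sympl}(\Delta_t F + c)=0$ and $\mathrm{Sympl}(\tilde L_d)=\rho\,\mathrm{Sympl}(L_d)$.

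For the volume form I would note that $\mathrm{Vol}$ is nonlinear, so I cannot simply add contributions; instead, from $\frac{\p^2 \tilde L_d}{\p x_1 \p x_0} = \rho\,\frac{\p^2 L_d}{\p x_1 \p x_0}$ (the gauge and constant terms again dropping out of the mixed Hessian), the determinant in \eqref{eq:VolDefLd} scales by $\rho^d$, giving $\mathrm{Vol}(\tilde L_d)=\rho^d\,\mathrm{Vol}(L_d)$. Finally, the non-degeneracy-set equality for $\rho\neq 0$ is immediate from the same identity $\frac{\p^2 \tilde L_d}{\p x_0 \p x_1}=\rho\,\frac{\p^2 L_d}{\p x_0 \p x_1}$, since multiplying a matrix by a nonzero scalar preserves invertibility. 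I do not anticipate a genuine obstacle here, as every step is an elementary differentiation exploiting the separable structure of $\Delta_t F$; the only point requiring care is the bookkeeping of signs and of which argument each partial derivative acts on in \eqref{eq:MmMinus}--\eqref{eq:MmPlus} and \eqref{eq:SymplDiscrete}, which is precisely where the discrete case differs cosmetically from the continuous \cref{lem:TrafoGeomQuantities}.
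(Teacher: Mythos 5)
Your proposal is correct and matches the paper's proof, which simply states that the transformation rules follow by direct computation and that the non-degeneracy statement follows from the rule for $\mathrm{Vol}$ --- precisely the scaling identity $\frac{\p^2 \tilde L_d}{\p x_0 \p x_1} = \rho\,\frac{\p^2 L_d}{\p x_0 \p x_1}$ you use. Your write-up just makes the elementary differentiations explicit (with one harmless slip of notation, $\mathrm{Mm}^\mp$ where $\mathrm{Mm}^-$ is meant), so there is nothing substantive to add.
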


\begin{proof}
	The transformation rules of EL, Mm$^\pm$, Sympl, Vol are obtained by a direct computation. 
	The assertion about invariance of non-degenerate points follows from the transformation rule of Vol.
\end{proof}




\section{{Regularisation}}\label{sec:Normalisation}

In the machine learning framework that we will introduce in \cref{sec:MLSetting},
we will employ {regularisation} conditions to safeguard us from finding degenerate solutions to the inverse problem of identifying a Lagrangian to given motions. Extreme instances of degenerate solutions are Null-Lagrangians, for which $\mathrm{EL}(L) \equiv 0$. These are consistent with any dynamics but cannot discriminate curves that are not motions.

The following section serves two goals:

\begin{itemize}
	\item We justify that the employed {regularisation} conditions are covered by the ambiguities presented in \cref{sec:background}. {Therefore, imposing these on $L$ does not restrict the generality of the ansatz. We will also refer to these as {\em normalisation conditions} as we will impose that these are fulfilled exactly by the data-driven model.} 
	
	\item {The normalisation conditions (together with the system's motions) do {\em not} determine the Lagrangian uniquely. However, they guarantee that the sought Lagrangian is non-degenerate, provided that there are no true degenerate Lagrangians.
	Furthermore, we show that the normalisation conditions determine the symplectic structure $\mathrm{Sym}(L)$, the Hamiltonian $\mathrm{Ham}(L)$, and the Euler--Lagrange operator $\mathrm{EL}(L)$ of the system uniquely, provided that no true alternative Lagrangians exist. In the context of uncertainty quantification, this implies that any ambiguity in the representation of the model $L$ does not contribute to uncertainty in the Hamiltonian, the symplectic structure, or the equations of motions. This justifies the approach towards uncertainty quantification in the article.}
	
\end{itemize}

A reader mostly interested in the machine learning setting can skip ahead to \cref{sec:MLSetting}.



\subsection{{Preparation of the regularisation strategy}}\label{sec:PrepareRegularisationStrategy}

\begin{proposition}\label{prop:NormaliseL}
	Let $\gls{xb} = (x_b,\dot x_b) \in T\R^d \cong \R^d \times \R^d$, 
	{$\mathring L$ a Lagrangian, and } $\hat x_\tau = (x_\tau,\dot x_\tau, \ddot x_\tau) \in (\R^d)^3$ with $\mathrm{EL}(\mathring{L})(\hat x_\tau) \not =0$.\footnote{{This means that $\hat x_\tau = (x_\tau,\dot x_\tau, \ddot x_\tau)$ is any point that does not correspond to a motion of the dynamical system described by $\mathring{L}$. For instance, when $(x_\tau,\dot x_\tau)$ is an equilibrium point of the dynamics then we can chose any $\ddot x_\tau \not =0$. The assumption excludes trivial Lagrangians such as $\mathring{L}\equiv 0$.}}
	Let $c_b \in \R$, $p_b \in \R^d$, $c_\tau \not =0$.
Then there exists a Lagrangian $L$ such that $L$ is equivalent to $\mathring{L}$ and
\begin{equation}\label{eq:normLCond}
L(\overline{x}_b)=c_b,
\quad 
\mathrm{Mm}(L)(\overline{x}_b) = \frac{\p L}{\p \dot x}(\overline{x}_b) = p_b,
\quad
(\mathrm{EL}(L)(\hat{x}_\tau))_k = c_\tau,
\end{equation}
where $1\le k\le d$ is any index for which the $k$th component of  $\mathrm{EL}(\mathring{L})(\hat x_\tau)$ is not zero.	
\end{proposition}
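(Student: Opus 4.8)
The plan is to look for $L$ within the equivalence class of $\mathring{L}$, i.e.\ of the form $L = \rho \mathring{L} + \d_t F + c$ with $\rho \in \R \setminus \{0\}$, $c \in \R$, and $F \colon \R^d \to \R$ continuously differentiable, and to exploit the transformation rules of \cref{lem:TrafoGeomQuantities} to turn the three requirements in \eqref{eq:normLCond} into explicit equations for the free parameters $\rho$, $F$, and $c$. The key observation making the argument clean is that these conditions decouple triangularly: the Euler--Lagrange condition fixes $\rho$, the momentum condition then fixes $\nabla F(x_b)$, and the value condition finally fixes $c$.

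First I would treat the Euler--Lagrange condition. By \cref{lem:TrafoGeomQuantities}, $\mathrm{EL}(L) = \rho \, \mathrm{EL}(\mathring{L})$, hence $(\mathrm{EL}(L)(\hat{x}_\tau))_k = \rho\,(\mathrm{EL}(\mathring{L})(\hat{x}_\tau))_k$. Since the index $k$ is chosen so that $(\mathrm{EL}(\mathring{L})(\hat{x}_\tau))_k \neq 0$, I set
\[
\rho = \frac{c_\tau}{(\mathrm{EL}(\mathring{L})(\hat{x}_\tau))_k}.
\]
Because $c_\tau \neq 0$ by hypothesis, this yields $\rho \neq 0$, which is precisely what is required for $L$ to be (gauge-)equivalent to $\mathring{L}$. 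This is the single step where both assumptions $\mathrm{EL}(\mathring{L})(\hat{x}_\tau) \neq 0$ and $c_\tau \neq 0$ are genuinely used.

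With $\rho$ fixed, I would turn to the momentum and value conditions. Again by \cref{lem:TrafoGeomQuantities}, $\mathrm{Mm}(L)(\overline{x}_b) = \rho\,\mathrm{Mm}(\mathring{L})(\overline{x}_b) + \nabla F(x_b)$, so requiring $\mathrm{Mm}(L)(\overline{x}_b) = p_b$ amounts to prescribing the gradient of $F$ at $x_b$, namely $\nabla F(x_b) = p_b - \rho\,\mathrm{Mm}(\mathring{L})(\overline{x}_b)$; concretely one may take the linear function $F(x) = \big(p_b - \rho\,\mathrm{Mm}(\mathring{L})(\overline{x}_b)\big)^\top x$. Since $\d_t F(\overline{x}_b) = \dot{x}_b^\top \nabla F(x_b)$ depends only on this already-determined gradient, the value condition $L(\overline{x}_b) = \rho \mathring{L}(\overline{x}_b) + \dot{x}_b^\top \nabla F(x_b) + c = c_b$ simply determines $c = c_b - \rho\,\mathring{L}(\overline{x}_b) - \dot{x}_b^\top \nabla F(x_b)$. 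The resulting $L = \rho \mathring{L} + \d_t F + c$ is then equivalent to $\mathring{L}$ and satisfies all three conditions by construction. There is no real analytic obstacle; the only points to watch are the triangular dependence of the equations and the role of $c_\tau \neq 0$ in securing $\rho \neq 0$, so that equivalence is preserved rather than the Lagrangian collapsing.
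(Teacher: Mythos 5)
Your proposal is correct and follows essentially the same route as the paper's own proof: you choose $\rho = c_\tau/(\mathrm{EL}(\mathring{L})(\hat{x}_\tau))_k$, the linear gauge term $F(x) = \big(p_b - \rho\,\mathrm{Mm}(\mathring{L})(\overline{x}_b)\big)^\top x$, and the constant $c = c_b - \rho\,\mathring{L}(\overline{x}_b) - \dot{x}_b^\top\nabla F(x_b)$, which are exactly the paper's choices. Your explicit remark on the triangular decoupling of the three conditions and on where $c_\tau \neq 0$ secures $\rho \neq 0$ merely makes the paper's implicit ordering transparent.
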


\begin{proof}
Let
$\mathring{c_b} = \mathring L(\overline{x}_b)$,
$\mathring{p_b}=\mathrm{Mm}(\mathring L)(\overline{x}_b)$,
$\mathring{c_\tau} = (\mathrm{EL}(\mathring L)(\hat{x}_\tau))_k$ ($k$ th component).
We set
\[
\rho = \frac{c_\tau}{\mathring{c_\tau}},
\quad
F(x) = x^\top (p_b- \rho \mathring{p_b}),
\quad 
c = c_b - \dot{x}_b^\top (p_b- \rho \mathring{p_b})-\rho \mathring{c_b}.
\]
Now the Lagrangian
\begin{align*}
	L\mod{(x,\dot x)} &= \rho \mathring{L}\mod{(x,\dot x)} + \d_t F\mod{(x,\dot x)} + c\\
	&= \mod{\rho \mathring{L}(x,\dot x)
	+(\dot x - \dot x_b)^\top (p_b-\rho \mathring{p}_b)+c_b - \rho \mathring{c}_b}
\end{align*}
is equivalent to $\mathring{L}$ and fulfils \eqref{eq:normLCond}.
\end{proof}

While the equivalent Lagrangian $L$ constructed in \cref{prop:NormaliseL} is always non-degenerate if $\mathring{L}$ is non-degenerate (by \cref{lem:TrafoGeomQuantities}), this is not necessarily true for all Lagrangians governing the motions even when restricting to those that fulfil \eqref{eq:normLCond}: indeed, in \cref{ex:LinearMotions} of affine linear motions governed by $\mathring L(x,\dot x) = \dot x^2$, we can choose $g$ such that $L(x,\dot x) = g(\dot x)$ has degenerate points at any points. However, when we exclude systems with alternative Lagrangians, then we have the following \namecref{prop:NOmegaImpliesNonDeg}.

\begin{proposition}\label{prop:NOmegaImpliesNonDeg}
	Let $\mathring{L}$ be a Lagrangian that is non-degenerate on some non-empty, connected set $\mathcal{O} \subset T\R^d \cong \R^d \times \R^d$.
	When no alternative Lagrangian to $\mathring{L}$ exists, then any Lagrangian $L$ with the property
	\[
	\mathrm{EL}(\mathring L)(x(t),\dot{x}(t),\ddot{x}(t)) =0  \implies \mathrm{EL}(L)(x(t),\dot{x}(t),\ddot{x}(t)) =0
	\]
	on $\mathcal{O}\times \R^d$ is either a null-Lagrangian (i.e.\ $\mathrm{EL}(L) \equiv 0$) or is non-degenerate on $\mathcal{O}$.
\end{proposition}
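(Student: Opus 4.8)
The plan is to exploit that the Euler--Lagrange expression \eqref{eq:ELContinuous} is affine in $\ddot x$ and to reduce the entire statement to the single algebraic claim that $\frac{\p^2 L}{\p \dot x \p \dot x}$ is a \emph{constant} scalar multiple of $\frac{\p^2 \mathring L}{\p \dot x \p \dot x}$ on $\mathcal O$. First, since $\mathring L$ is non-degenerate on $\mathcal O$, for every $(x,\dot x)\in\mathcal O$ the equation $\mathrm{EL}(\mathring L)(x,\dot x,\ddot x)=0$ has the unique solution $\ddot x = g_{\mathring L}(x,\dot x)$, the acceleration field of $\mathring L$. Writing $A_{\mathring L}:=\frac{\p^2\mathring L}{\p\dot x\p\dot x}$ and $A:=\frac{\p^2 L}{\p\dot x\p\dot x}$ and using that by \eqref{eq:ELContinuous} both $\mathrm{EL}(\mathring L)$ and $\mathrm{EL}(L)$ are affine in $\ddot x$ with leading coefficients $A_{\mathring L}$ and $A$, I obtain $\mathrm{EL}(\mathring L)(x,\dot x,\ddot x)=A_{\mathring L}(x,\dot x)\,(\ddot x-g_{\mathring L}(x,\dot x))$. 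The hypothesis says $\mathrm{EL}(L)$ vanishes whenever $\ddot x=g_{\mathring L}(x,\dot x)$; since it is affine in $\ddot x$ with leading term $A\,\ddot x$, this forces $\mathrm{EL}(L)(x,\dot x,\ddot x)=A(x,\dot x)\,(\ddot x-g_{\mathring L}(x,\dot x))$ on $\mathcal O\times\R^d$.

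Next I introduce a perturbation to bring in the no-alternative hypothesis. By linearity of $\mathrm{EL}$ in the Lagrangian, for $t\in\R$ the Lagrangian $\mathring L+tL$ satisfies $\mathrm{EL}(\mathring L+tL)=(A_{\mathring L}+tA)\,(\ddot x-g_{\mathring L})$ on $\mathcal O\times\R^d$. For $t$ small enough that $A_{\mathring L}+tA$ is invertible on $\mathcal O$, this matrix being invertible shows that $\mathring L+tL$ is non-degenerate on $\mathcal O$ and that its Euler--Lagrange equations have \emph{exactly} the solutions $\ddot x=g_{\mathring L}$, i.e.\ precisely the motions of $\mathring L$. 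As no alternative Lagrangian to $\mathring L$ exists, $\mathring L+tL$ must therefore be gauge-equivalent to $\mathring L$, so by \cref{lem:TrafoGeomQuantities} there is a constant $\rho\neq0$ with $\mathrm{EL}(\mathring L+tL)=\rho\,\mathrm{EL}(\mathring L)$. Comparing the coefficients of $\ddot x$ on $\mathcal O$ yields $A_{\mathring L}+tA=\rho A_{\mathring L}$, hence $A=\lambda A_{\mathring L}$ with the constant $\lambda=(\rho-1)/t$.

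The conclusion then falls out: if $\lambda=0$ then $A\equiv0$ on $\mathcal O$, so $\mathrm{EL}(L)\equiv0$ and $L$ is a null-Lagrangian; if $\lambda\neq0$ then $A=\lambda A_{\mathring L}$ is invertible at every point of $\mathcal O$ because $A_{\mathring L}$ is, so $L$ is non-degenerate on $\mathcal O$. The step that needs the most care — and the main obstacle — is securing a \emph{single} $t\neq0$ for which $A_{\mathring L}+tA$ is invertible on all of $\mathcal O$ simultaneously: the no-alternative hypothesis is global, so the perturbed system must share the motions of $\mathring L$ on the whole of $\mathcal O$, not merely locally, and only then does one obtain a single global constant $\rho$. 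This uniform choice is available because, at each point, $\det(A_{\mathring L}+tA)$ is a polynomial in $t$ equal to $\det A_{\mathring L}\neq0$ at $t=0$ whose smallest root is controlled by $\|A_{\mathring L}^{-1}A\|$, which is bounded on the relatively compact domain $\overline{\mathcal O}$ assumed throughout the article; absent boundedness one would argue on an exhaustion of $\mathcal O$ by relatively compact connected subsets. A secondary point to verify is the bookkeeping of the gauge-equivalence definition, ensuring that the proportionality constant $\rho$ is indeed nonzero so that $\lambda$ is well defined.
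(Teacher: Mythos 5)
Your algebraic core is correct and takes a genuinely different route from the paper: writing $\mathrm{EL}(\mathring L)=A_{\mathring L}\,(\ddot x-g_{\mathring L})$, deducing $\mathrm{EL}(L)=A\,(\ddot x-g_{\mathring L})$ from affinity in $\ddot x$, and comparing $\ddot x$-coefficients after a gauge relation does yield $A=\lambda A_{\mathring L}$ with a single constant $\lambda$, from which both alternatives of the proposition follow. However, the step you yourself flag as the main obstacle is a genuine gap, and your proposed justifications do not close it. The proposition assumes only that $\mathcal{O}\subset T\R^d$ is non-empty and connected and that $\mathring L$ is non-degenerate \emph{on $\mathcal O$}; it does not assume $\mathcal O$ bounded, and even if $\overline{\mathcal O}$ were compact (it is not ``assumed throughout the article'' for this statement, which sits outside the machine-learning setting), non-degeneracy is not assumed on $\overline{\mathcal O}$, so $\|A_{\mathring L}^{-1}A\|$ can blow up as $A_{\mathring L}$ degenerates towards $\partial\mathcal O$ and no single $t\neq 0$ with $A_{\mathring L}+tA$ invertible on all of $\mathcal O$ need exist a priori (already for $d=1$ the pointwise bad values $t=-A_{\mathring L}(\overline x)/A(\overline x)$ can sweep out all of $\R\setminus\{0\}$ before one knows $A=\lambda A_{\mathring L}$). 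The exhaustion fallback fails for a structural reason: with $t_n$ adapted to a relatively compact $\mathcal O_n$, the perturbed Lagrangian $\mathring L+t_nL$ reproduces the motions of $\mathring L$ only on $\mathcal O_n$, while on $\mathcal O\setminus\mathcal O_n$, wherever $A_{\mathring L}+t_nA$ is singular, the equation $(A_{\mathring L}+t_nA)(\ddot x-g_{\mathring L})=0$ admits solutions $\ddot x\neq g_{\mathring L}$; hence $\mathring L+t_nL$ need not have the \emph{same} set of solutions as $\mathring L$, and the hypothesis ``no alternative Lagrangian exists'' --- which by the paper's definition concerns precisely Lagrangians with the same solution set --- cannot be invoked. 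Nor does absence of alternatives localize: it is not inherited by the subsets $\mathcal O_n$.

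For comparison, the paper's proof avoids the perturbation entirely: it applies the no-alternatives hypothesis directly to $L$, writing $L=\rho\mathring L+\mathrm{d}_tF+c$ on $\mathcal O$ with $\rho\in\R$ (the value $\rho=0$ covering the null-Lagrangians); if $L$ is not null, one picks $\hat x\in\mathcal O\times\R^d$ and a component $k$ with $(\mathrm{EL}(L)(\hat x))_k\neq 0$, and \cref{lem:TrafoGeomQuantities} gives $(\mathrm{EL}(L)(\hat x))_k=\rho\,(\mathrm{EL}(\mathring L)(\hat x))_k$, forcing $\rho\neq 0$, after which non-degeneracy on $\mathcal O$ follows from $\mathrm{Vol}(L)=\rho^d\,\mathrm{Vol}(\mathring L)$. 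Your construction was evidently designed to apply the no-alternatives hypothesis only to a manifestly non-degenerate Lagrangian with exactly the same solutions --- a more literal reading of the definition than the paper's --- but it is exactly this extra fidelity that creates the uniform-$t$ requirement your hypotheses cannot deliver. To salvage your route you would need an additional assumption such as $\sup_{\mathcal O}\|A_{\mathring L}(\cdot)^{-1}A(\cdot)\|<\infty$ (e.g.\ $\overline{\mathcal O}$ compact with non-degeneracy up to the boundary); alternatively, under the paper's reading of the hypothesis the perturbation is unnecessary and the argument collapses to the paper's.
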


\begin{proof}
	As no alternative Lagrangian exists, there must be $\rho,c \in \R$ and $F\colon \R^d \to \R$ such that on $\mathcal{O}$
	\[
	L = \rho \mathring L + \d_t F + c.
	\]
	If $L$ is not a null-Lagrangian on $\mathcal{O}$, there must be $\hat{x} \in \mathcal{O} \times \R^d$ with $\mathrm{EL}(L)(\hat x) \not =0$. Let $1\le k\le d$ such that $(\mathrm{EL}(L)(\hat x))_k \not =0$.
	By \cref{lem:TrafoGeomQuantities}
	\[
	0 \not = (\mathrm{EL}(L)(\hat x))_k
	= \rho (\mathrm{EL}(\mathring L)(\hat x))_k.
	\]
	Thus $\rho \not =0$. Non-degeneracy on $\mathcal O$ follows from $\mathrm{Vol}(L)=\rho^d\mathrm{Vol}(\mathring L)$.
\end{proof}

\begin{remark}
	Under genericity assumptions on the dynamics with $d \ge 2$, no alternative Lagrangians exist \cite{HENNEAUX198245}. If a generic dynamical system is governed by a non-degenerate Lagrangian, then any Lagrangian $L$ with $\mathrm{EL}(L)=0$ on all motions that is non-degenerate anywhere, is non-degenerate everywhere.
\end{remark}

Refer to \cref{prop:NormaliseLNonlinear} of \cref{app:SymplVolNormalise} for an alternative normalisation strategy for Lagrangians based on normalising symplectic volume. 
It is comparable to techniques developed in \cite{DLNNPDE} for neural network models of Lagrangians.

The following \namecref{prop:HSymWelldefined} implies that {the Euler--Lagrange operator (and thus the representation of the equation of motions) and the} Hamiltonian and symplectic structure are uniquely determined when the normalisation condition \eqref{eq:normLCond} is fulfilled, provided that no alternative Lagrangians exist.

\begin{proposition}\label{prop:HSymWelldefined}
Let $\mathring L$ be a Lagrangian on $\gls{TRd}$ with \eqref{eq:normLCond} for some $\gls{xb}=(x_b,\dot x_b) \in T\R^d$, $1\le k \le d$, $\gls{cb}\in \R$, $\gls{pb} \in \R^d$, $c_\tau \in \R \setminus \{0\}$.
Then for any Lagrangian $L$ {with \eqref{eq:normLCond} that is} equivalent to $\mathring L$ we have
\[
{\mathrm{EL}(L) = \mathrm{EL}(\mathring{L})}, \quad
\mathrm{Ham}(L) = \mathrm{Ham}(\mathring{L}), \quad
\mathrm{Sym\mod{pl}}(L) = \mathrm{Sym\mod{pl}}(\mathring L).
\]
\end{proposition}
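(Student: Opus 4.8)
The plan is to exploit the equivalence hypothesis to write $L = \rho \mathring L + \d_t F + c$ for some $\rho \in \R \setminus \{0\}$, $c \in \R$, and continuously differentiable $F \colon \R^d \to \R$, and then to use the three normalisation conditions in \eqref{eq:normLCond}---which hold for \emph{both} $L$ and $\mathring L$ with the same data $(c_b, p_b, c_\tau, k)$---to force $\rho = 1$ and $c = 0$. Once these are established, \cref{lem:TrafoGeomQuantities} delivers all three claimed identities directly, since $\mathrm{EL}$ and $\mathrm{Sympl}$ depend only on $\rho$ (they do not see the gauge term $\d_t F$ nor the constant $c$), and $\mathrm{Ham}$ depends only on $\rho$ and $c$.

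First I would pin down $\rho$ via the Euler--Lagrange condition. By \cref{lem:TrafoGeomQuantities} we have $\mathrm{EL}(L) = \rho\, \mathrm{EL}(\mathring L)$; evaluating at $\hat x_\tau$ and taking the $k$th component gives $c_\tau = (\mathrm{EL}(L)(\hat x_\tau))_k = \rho\,(\mathrm{EL}(\mathring L)(\hat x_\tau))_k = \rho\, c_\tau$. Because $c_\tau \neq 0$, this forces $\rho = 1$. With $\rho = 1$, the transformation rules $\mathrm{EL}(L) = \rho\, \mathrm{EL}(\mathring L)$ and $\mathrm{Sympl}(L) = \rho\, \mathrm{Sympl}(\mathring L)$ of \cref{lem:TrafoGeomQuantities} immediately yield $\mathrm{EL}(L) = \mathrm{EL}(\mathring L)$ and $\mathrm{Sympl}(L) = \mathrm{Sympl}(\mathring L)$.

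It remains to treat the Hamiltonian, for which I need $c = 0$, and here the momentum condition enters as an intermediate step. From $\mathrm{Mm}(L) = \mathrm{Mm}(\mathring L) + \nabla F$ (using $\rho = 1$) and the common value $\mathrm{Mm}(L)(\overline x_b) = p_b = \mathrm{Mm}(\mathring L)(\overline x_b)$, evaluation at $\overline x_b$ gives $\nabla F(x_b) = 0$. The one point requiring care is that this constrains $\nabla F$ only at the single base point $x_b$ and \emph{not} globally; however, the gauge term has the specific form $\d_t F(x,\dot x) = \dot x^\top \nabla F(x)$, so its value at $\overline x_b = (x_b,\dot x_b)$ is $\dot x_b^\top \nabla F(x_b) = 0$. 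Evaluating the defining relation $L = \mathring L + \d_t F + c$ at $\overline x_b$ and using $L(\overline x_b) = c_b = \mathring L(\overline x_b)$ then gives $c_b = c_b + 0 + c$, whence $c = 0$. Finally, $\mathrm{Ham}(L) = \rho\, \mathrm{Ham}(\mathring L) - c = \mathrm{Ham}(\mathring L)$ completes the argument.
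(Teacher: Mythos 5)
Your proof is correct and takes essentially the same approach as the paper: both pin down $\rho=1$ from the third condition of \eqref{eq:normLCond} using $c_\tau\neq 0$, read off $\mathrm{EL}(L)=\mathrm{EL}(\mathring L)$ and $\mathrm{Sympl}(L)=\mathrm{Sympl}(\mathring L)$ from \cref{lem:TrafoGeomQuantities}, and use the first two conditions to settle the Hamiltonian. The only cosmetic difference is that you make $c=0$ explicit via $\nabla F(x_b)=0$, whereas the paper equivalently computes $\mathrm{Ham}(L)(\overline{x}_b)=\dot x_b^\top p_b - c_b=\mathrm{Ham}(\mathring L)(\overline{x}_b)$ and concludes from the lemma's rule $\mathrm{Ham}(L)=\mathrm{Ham}(\mathring L)-c$ that the constant discrepancy vanishes.
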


\begin{proof}
$L$ is of the form $L=\rho {\mathring L} + \d_t F + c$. The last condition of \eqref{eq:normLCond} implies $\rho=1$. Thus {$\mathrm{EL}(L) = \mathrm{EL}(\mathring{L})$ and} $\mathrm{Sym\mod{pl}}(L) = \mathrm{Sym\mod{pl}}(\mathring L)$ by \cref{lem:TrafoGeomQuantities}. With $\rho=1$ and the first two conditions \eqref{eq:normLCond} we have
\[
\mathrm{Ham}(L)(\overline{x}_b) = \dot x_b^\top p_b - c_b = \mathrm{Ham}(\mathring{L})(\overline{x}_b).
\]
Then $\mathrm{Ham}(L)= \mathrm{Ham}(\mathring{L})$ follows by \cref{lem:TrafoGeomQuantities}.
\end{proof}

For discrete Lagrangians, we have the following analogy to \cref{prop:NormaliseL}.

\begin{proposition}\label{prop:NormaliseLd}
	Let $\gls{xb} = (x_{0b}, x_{1b}) \in ( \R^d)^2$, $\hat{x}_\tau = (x_{0\tau}, x_{1\tau},x_{2\tau}) \in ( \R^d)^3$ and $\mathring L_d$ a discrete Lagrangian with $\mathrm{DEL}(L_d)(\hat{x}_b) \not =0$.
	Let $\gls{cb} \in \R$, $\gls{pb} \in \R^d$, $c_\tau \in \R\setminus\{0\}$. There exists a discrete Lagrangian $L_d$ such that $L_d$ is equivalent to $\mathring{L}_d$ and
\begin{equation}\label{eq:normLdCond}
	L_d(\overline{x}_b)=c_b,
	\quad 
	\mathrm{Mm}^+(L_d)(\overline{x}_b) = p_b,
	\quad
	(\mathrm{DEL}(L_d)(\hat{x}_\tau))_k = c_\tau,
\end{equation}
where $1\le k\le d$ can be chosen as any index for which the component of $\mathrm{DEL}(\hat{x}_b)$ is not zero.
\end{proposition}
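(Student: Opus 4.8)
The plan is to mirror exactly the construction used in the proof of \cref{prop:NormaliseL}, adapting it to the discrete transformation rules recorded in \cref{lem:TrafoGeomQuantitiesDiscrete}. I would begin by recording the three geometric quantities attached to the given discrete Lagrangian $\mathring L_d$ at the relevant points, namely
\begin{equation*}
\mathring c_b = \mathring L_d(\overline x_b), \quad
\mathring p_b = \mathrm{Mm}^+(\mathring L_d)(\overline x_b), \quad
\mathring c_\tau = (\mathrm{DEL}(\mathring L_d)(\hat x_\tau))_k,
\end{equation*}
where $k$ is an index at which $\mathrm{DEL}(\mathring L_d)(\hat x_\tau)$ has a nonzero component; such an index exists precisely because of the hypothesis $\mathrm{DEL}(\mathring L_d)(\hat x_\tau)\neq 0$ (the excerpt's statement writes $\hat x_b$ here, but the intended nondegeneracy condition is at $\hat x_\tau$).

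Next I would define the three free parameters of an equivalence transformation so as to hit the three prescribed values. The rescaling $\rho$ must be chosen to normalise the $\mathrm{DEL}$ component, the gradient of $F$ must correct the discrete right momentum $\mathrm{Mm}^+$, and the additive constant $c$ must fix the value at $\overline x_b$. Concretely, set
\begin{equation*}
\rho = \frac{c_\tau}{\mathring c_\tau}, \quad
F(x) = x^\top (p_b - \rho\, \mathring p_b), \quad
c = c_b - x_{1b}^\top (p_b - \rho\, \mathring p_b) - \rho\, \mathring c_b,
\end{equation*}
and form the candidate $L_d = \rho\, \mathring L_d + \Delta_t F + c$ with $\Delta_t F(x_0,x_1) = F(x_1)-F(x_0)$. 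By construction $L_d$ is gauge-equivalent to $\mathring L_d$, and since $c_\tau \neq 0$ we have $\rho\neq 0$ as required for equivalence. Because $F$ is linear, its gradient is the constant $\nabla F \equiv p_b - \rho\,\mathring p_b$, which is what makes the momentum correction exactly hit $p_b$.

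It then remains to verify \eqref{eq:normLdCond}. Using the transformation rule $\mathrm{DEL}(L_d)=\rho\,\mathrm{DEL}(\mathring L_d)$ from \cref{lem:TrafoGeomQuantitiesDiscrete}, the $k$th component evaluates to $\rho\,\mathring c_\tau = c_\tau$. Using $\mathrm{Mm}^+(L_d)(x_0,x_1) = \rho\,\mathrm{Mm}^+(\mathring L_d)(x_0,x_1) + \nabla F(x_1)$ and evaluating at $\overline x_b=(x_{0b},x_{1b})$ gives $\rho\,\mathring p_b + (p_b - \rho\,\mathring p_b) = p_b$. Finally, since $\Delta_t F(\overline x_b) = F(x_{1b}) - F(x_{0b}) = x_{1b}^\top(p_b-\rho\,\mathring p_b) - x_{0b}^\top(p_b-\rho\,\mathring p_b)$, a direct substitution into $L_d(\overline x_b) = \rho\,\mathring c_b + \Delta_t F(\overline x_b) + c$ should collapse to $c_b$; this is the one spot where I would take care, since the value condition involves the discrete difference $F(x_{1b})-F(x_{0b})$ rather than a single evaluation as in the continuous case. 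I expect the main (and only genuine) obstacle to be bookkeeping in this last verification: one must confirm that the choice of the additive constant $c$ correctly absorbs both the $x_{1b}$ and the $x_{0b}$ contributions of $\Delta_t F$, which may require adjusting $c$ relative to the continuous analogue to account for the $-x_{0b}^\top(p_b-\rho\,\mathring p_b)$ term that has no counterpart in \cref{prop:NormaliseL}.
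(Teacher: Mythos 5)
Your construction is precisely the paper's: the same $\rho = c_\tau/\mathring{c}_\tau$, the same linear gauge function $F(x) = x^\top(p_b - \rho\,\mathring{p}_b)$, and the same verification via the transformation rules of \cref{lem:TrafoGeomQuantitiesDiscrete}. Your self-flagged worry about the constant is justified: as displayed, your $c = c_b - x_{1b}^\top(p_b-\rho\,\mathring{p}_b) - \rho\,\mathring{c}_b$ yields $L_d(\overline{x}_b) = c_b - x_{0b}^\top(p_b - \rho\,\mathring{p}_b)$, so the value condition in \eqref{eq:normLdCond} fails in general; the paper instead takes $c = c_b - \rho\,\mathring{c}_b - (x_{1b}-x_{0b})^\top(p_b-\rho\,\mathring{p}_b)$, which absorbs both endpoint contributions of $\Delta_t F(\overline{x}_b) = (x_{1b}-x_{0b})^\top(p_b-\rho\,\mathring{p}_b)$ exactly as you anticipated. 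With that single substitution your argument coincides with the paper's proof; you are also correct that the nondegeneracy hypothesis should read $\mathrm{DEL}(\mathring{L}_d)(\hat{x}_\tau)\neq 0$ rather than being evaluated at $\hat{x}_b$ (a slip the paper's own proof repeats when defining $\mathring{c}_\tau$).
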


\begin{proof}
	Let $\mathring{c_b} = \mathring L_d(\overline{x}_b)$, $\mathring{p_b}=\mathrm{Mm}^+(\mathring L_d)(\overline{x}_b)$, $\mathring{c_\tau} = (\mathrm{DEL}(\mathring L_d)(\hat{x}_b))_k$.
	We set
\[
\rho = \frac{c_\tau}{\mathring{c_\tau}},
\quad
	F(x) = x^\top (p_b- \rho \mathring{p_b}),
	\quad 
	c = c_b -\rho \mathring{c_b}  - (x_{1b}-x_{0b})^\top (p_b- \rho \mathring{p_b}).
	\]
Now the Lagrangian $L_d = \rho \mathring{L}_d + \Delta_t F + c$ is equivalent to $L_d$ and fulfils \eqref{eq:normLdCond}.
\end{proof}

\begin{remark}
A statement similar to \cref{prop:NormaliseLd} holds true with $\mathrm{Mm}^-$ replacing $\mathrm{Mm}^+$. Moreover, a statement in analogy to \cref{prop:NOmegaImpliesNonDeg} can be obtained with discrete quantities replacing their continuous counterparts. The details shall not be spelled out in this context.
Moreover, an alternative normalisation strategy based on regularising the discrete symplectic volume is provided in \cref{prop:NormaliseLdNonlinear} in \cref{app:SymplVolNormalise}, where it is also compared to regularisation strategies in the neural network context of \cite{DLNNPDE}.
\end{remark}

\subsection{{Utilisation in a data-driven context}}\label{sec:UtiliseNormalisation}

In the following section, we will consider the inverse problem of inferring a Lagrangian or discrete Lagrangian from motion data. For this, we will augment the inverse problem by normalisation conditions \eqref{eq:normLCond} or \eqref{eq:normLdCond}, respectively, for values of $c_b\in \R$, $p_b \in \R^d$, and $c_\tau \in \R\setminus\{0\}$. \Cref{prop:NormaliseL} or \cref{prop:NormaliseLd} show that this augmentation does not restrict the generality of the ansatz.

Although the conditions together with the true dynamics do not determine the (discrete) Lagrangian uniquely, they do determine the Euler--Lagrange operator $\mathrm{EL}(L)$ as well as the Hamiltonian and symplectic structure, provided that the true dynamical system does not admit alternative Lagrangians (\mod{\cref{prop:HSymWelldefined}}).
When only limited data is observed, there is some uncertainty in the equations of motions $\mathrm{EL}(L)=0$, the Hamiltonian, symplectic structure, or any linear observable in $L$ that we want to quantify. The normalisation conditions eliminate any artificial uncertainty stemming from an ambiguous representation of the model.

{Moreover, when all true Lagrangians are non-degenerate, so is the sought Lagrangian in the augmented inverse problem (\cref{prop:NOmegaImpliesNonDeg}). Thus, the normalisation conditions safeguard us from inferring degenerate Lagrangians that are consistent with the observed motion data but fail to discriminate non-motions.}

\section{Data-driven method}\label{sec:MLSetting}

\subsection{Bayesian learning of continuous Lagrangians}\label{sec:MLContinuous}

In the following, we present a framework for learning a continuous Lagrangian from observations of a dynamical system.

Let $\gls{Omega} \subset T\R^d \cong \R^d \times \R^d$ be an open, bounded subset. Our goal is to identify a Lagrangian $L \colon \Omega \to \R$ based on observations $ \hat x = (\overline{x},\ddot x)  = (x,\dot x,\ddot x) \in  \Omega \times \R^d$ for which $\mathrm{EL}(L)(\hat{x})=0$ on all observations $\hat{x}$ such that the dynamics \eqref{eq:ELODE} to $L$ approximate the dynamics of an unknown true Lagrangian $\gls{LRef} \colon \Omega \to \R$.
{We interpret this task as seeking a solution to the Euler--Lagrange equation \eqref{eq:ELODE} that we interpret as a partial differential equation for $L$. We follow a Bayesian approach proposed in \cite{OwhadiLearningPDEGP} and assume a Gaussian field (see \cref{app:GaussianField} for definitions) as a prior for $L$ that we condition on fulfilling the Euler--Lagrange equation \eqref{eq:ELODE} on the data points and on regularisation conditions to obtain a posterior distribution for $L$. Even though in contrast to \cite{OwhadiLearningPDEGP} our partial differential equation is highly ill-posed, we prove in \cref{sec:ConvergenceAnalysis} that the posterior mean converges against a true Lagrangian of the motions in the infinite data limit.}


\subsubsection{\Gls{RKHS} set-up and Gaussian {fields}}\label{sec:RKHSSetup}
We consider the following set-up that makes use of the theory of reproducing kernel Hilbert spaces (RKHS). Refer to \cite{ChristmannSteinwart2008RKHS,OwhadiScovel2019} for background material.

Consider a symmetric function $\gls{K} \colon \Omega \times \Omega \to \R$. Assume that $K$ is positive definite, i.e.\ for all finite subsets $\{\overline{x}^{(j)}\}_{j=1}^{\gls{M}} \subset \Omega$ the matrix $(K(\overline{x}^{(i)},\overline{x}^{(j)}))_{i,j=1}^{\gls{M}}$ is positive definite. $\gls{K}$ is called {\em kernel}.

Consider the reproducing kernel Hilbert space (\gls{RKHS}) $\gls{U}$ to $\gls{K}$, i.e.\ 
consider the inner product space
\[
\mathring{U} =
\left\{ L = \sum_{j=1}^{n} \alpha_j K(\overline{x}^{(j)},\cdot ) \; | \; \alpha_j \in \R, n \in \N_0, \overline{x}^{(j)} \in \Omega \right\}
\]
with inner product defined as the linear extension of
\[
\langle K(\overline{x}, \cdot ) , K(\overline{y}, \cdot ) \rangle = K(\overline{x},\overline{y}).
\]
Then the Hilbert space $U$ is obtained as the topological closure of $\mathring{U}$ with respect to $\langle \cdot , \cdot \rangle$.
%
We denote the \mod{topological} dual space of $U$ by $U^\ast$. We define the map
\begin{equation}\label{eq:MathcalKappa}
	\gls{MathcalKappa}\colon U^\ast \to U, \quad \mod{\gls{phi}} \mapsto \mathcal{K}(\mod{\phi}) \text{ with } \mathcal{K}(\mod{\phi})(x) = \mod{\phi}(K(x,\cdot)).
\end{equation}
The map $\mathcal{K} \colon U^\ast \to U$ is linear, bijective, and symmetric, i.e.\ $\mod{\psi}(\mathcal{K}(\mod{\phi}))=\mod{\phi}(\mathcal{K}(\mod{\psi}))$ for $\mod{\gls{phi}},\mod{\gls{psi}} \in U^\ast$, and positive, i.e.\ $\mod{\phi}(\mathcal{K}(\mod{\phi})) > 0$ for $\mod{\phi} \in U^\ast \setminus \{0\}$.



Consider the {\em canonical Gaussian {field}}\footnote{\label{ft:GPvsGaussianFields}{The notion of a {\em Gaussian field} differs from the notion of a {\em Gaussian process} \cite[Def.3]{DaCosta2024}. See \cite[§3.5-§4, paragraph~1]{pfoertner2024} for further explanation. However, the literature refers to methods that solve pdes using the concept of Gaussian fields as {\em Gaussian processes based} methods (e.g.~\cite{OwhadiLearningPDEGP,Owhadi2023ErrorAnalysisPDEGP}).
		}}
$\gls{xi} \mod{\sim} \gls{Normal}(0,\mathcal K)$ on $U$, which is a weak random variable with the following properties:
\begin{itemize}
	\item {For all $\phi \in U^\ast$, $\phi(\xi) \sim \mathcal{N}(0,\phi(\mathcal{K}(\phi)))$ is a centred Gaussian random variable.}
	\item
	{Moreover,} for any finite collection $\gls{Phi}=(\mod{\phi}_1,\ldots,\mod{\phi}_n)$ with $\mod{\phi}_j \in U^\ast$ for $1\le j \le n$, the random variable $\Phi(\xi){=(\mod{\phi}_1(\xi),\ldots,\mod{\phi}_n(\xi))}$ is multivariate-normally distributed $\Phi(\xi) \in \mathcal{N}(0,\kappa)$
	with covariance matrix given as $\kappa=(\mod{\phi}_i(\mathcal{K}(\mod{\phi}_j)))_{i,j=1}^n$.
\end{itemize}

{See \cref{app:GaussianField} for a formal definition of Gaussian fields and existence statements recalled from \cite{OwhadiScovel2019}.	}

\subsubsection{Data}
Assume we observe distinct data points $\hat x^{(j)} = (\overline{x}^{(j)},\ddot x^{(j)}) = (x^{(j)},\dot x^{(j)}, \ddot x^{(j)}) \in \Omega \times \R^d$, $j=1,\ldots,\gls{M}$ of Lagrangian motions.
Define $\mathrm{EL}_{\hat{x}^{(j)}} \colon U \to \R^d$ as
\begin{equation}\label{eq:ELxhat}
	\gls{ELxhatj}(L)
	=\mathrm{EL}(L)(\hat{x}^{(j)})
	=\frac{\p^2 L(\overline{x}^{(j)})}{\p x \p \dot x} \ddot{x}^{(j)} 
	+ \frac{\p^2 L(\overline{x}^{(j)})}{\p x \p x} \dot{x}^{(j)}
	- \frac{\p L(\overline{x}^{(j)})}{\p x}
\end{equation}
for $1 \le j \le M$.
Furthermore, let $\gls{xb} = (x_b,\dot x_b) \in \Omega$ and consider $\mathrm{Mm}_{\overline{x}_b} \colon U \to \R^d$ defined as
\begin{align}\label{eq:MmEvaluate}
	\gls{Mm}_{\overline{x}_b}(L) 
	= \mathrm{Mm}(L)(\overline{x}_b)
	= \frac{\p L}{\p \dot x}(\overline{x}_b).
\end{align}
Moreover, let $\mathrm{ev}_{\overline{x}_b} \colon U \to \R$ with
\begin{equation}\label{eq:evalFunc}
\gls{evxb}(L) = L(\overline{x}_b)
\end{equation}
denote the evaluation functional. Collect these functionals in a linear map $\Phi_b^{\mod{(}M\mod{)}} \colon U \to (\R^d)^M \times \R^d \times \R$
\begin{equation}\label{eq:PhibM}
	\gls{Phibm} = (\mathrm{EL}_{\hat{x}^{(1)}},\ldots,\mathrm{EL}_{\hat{x}^{(M)}},\mathrm{Mm}_{\overline{x}_b},\mathrm{ev}_{\overline{x}_b}).
\end{equation}
For constants $c_b \in \R$, $p_b \in \R^d$
let
\begin{equation}\label{eq:ybM}
\gls{ybm} = (\underbrace{0,\ldots,0}_{M \text{ times } \mod{0\in \R^d}},p_b,c_b) \in (\R^d)^M   \times \R^d \times \R.
\end{equation}

{\em Interpretation:} When $\Phi_b^{\mod{(}M\mod{)}}(L) = y_b^{\mod{(}M\mod{)}}$ for some $L \in U$, then $L$ is consistent with the dynamical data and fulfils the normalisation conditions $\mathrm{Mm}(L)(\overline{x}_b) = p_b,L(\overline{x}_b)=c_b$. The condition $(\mathrm{EL}(L)(\overline x_b))_k=c_\tau$ of \cref{prop:NormaliseL} is left out due to practical considerations that will be discussed later -- see \cref{rem:MoreNormalisation}.

\subsubsection{Lagrangian as a conditional mean of Gaussian {fields}}\label{sec:LasCondGP}
Let us introduce the formulas required to infer a Lagrangian from data and predict uncertainty in the identified equations of motions and other linear observables such as Hamiltonian or symplectic structure. We postpone to \cref{sec:ConvergenceAnalysis} a more detailed derivation and a justification of applicability of the theory of Gaussian fields, such as the boundedness of certain operators. The following considers the noise-free case.

We will make use of the following assumptions that are fulfilled when the observed system is governed by the Euler--Lagrange equations to a non-degenerate Lagrangian $L \in \mathcal{C}^2(\overline{\Omega})$ and when $K$ is the square exponential kernel $K(\overline{x},\overline{y}) = \exp(-\|\mod{\overline{x}}-\mod{\overline{y}}\|^2/l)$, $l>0$ and $\Omega$ is an \mod{open, bounded and} locally Lipschitz domain (\cref{rem:AssumptionTrueForK}). \mod{Here $\mathcal{C}^2(\overline{\Omega})$ denotes the space of twice continuously differentiable functions on $\Omega$ for which all derivatives extend continuously to the topological closure $\overline{\Omega}$.}
\begin{assumption}\label{assumption:MLSetupAssumption}
Assume that
\[
\{
L \in \mathcal{C}^2(\overline{\Omega}) \, | \, \Phi_b^{\mod{(}M\mod{)}}(L) = y_b^{\mod{(}M\mod{)}}\} \cap U \not =\emptyset
\]
and that the RKHS $U$ to kernel $K$ embeds continuously into $\mathcal{C}^2(\overline{\Omega})$. Let $K$ be four times continuously differentiable.
\end{assumption}

By general theory {recalled in \cref{app:GaussianField}}, the posterior distribution \mod{$\xi_M$} of the canonical Gaussian {field} $\xi$ conditioned on the {bounded}\footnote{\label{foot:KernelAssumptions}{$\Phi_b^{\mod{(}M\mod{)}} \colon C^2(\overline{\Omega}) \to \R^{(M+1)d+1}$ is bounded (\cref{sec:FormalSettingCont}). 
} }
linear constraint $\Phi_b^{\mod{(}M\mod{)}}(\mod{\xi}) = y_b^{\mod{(}M\mod{)}}$ is again a Gaussian field $\gls{xiM}=\mathcal{N}(L_{\mod{(M)}},\mathcal{K}_{\Phi_b^{\mod{(}M\mod{)}}})$. It is characterised by the conditional mean $L_{\mod{(M)}}$ and the conditional covariance operator $\mathcal{K}_{\Phi_b^{\mod{(}M\mod{)}}}$.
To compute $L_{\mod{(M)}}$ and $\mathcal{K}_{\Phi_b^{\mod{(}M\mod{)}}}$, define the symmetric matrix
\[
\Theta \in \R^{((M+1)d+1) \times ((M+1)d+1)}, \quad \Theta_{k,l} = (\Phi_b^{\mod{(}M\mod{)}})_k\mathcal{K}(\Phi_b^{\mod{(}M\mod{)}})_l,
\quad 1 \le k,l \le (M+1)d+1,
\]
where $(\Phi_b^{\mod{(}M\mod{)}})_k$, $(\Phi_b^{\mod{(}M\mod{)}})_l$ refer to the $k$th or $l$th component of $\Phi_b^{\mod{(}M\mod{)}}$, respectively. In block matrix form, $\Theta$ can be written as
\begin{equation}\label{eq:ThetaMat}
	\gls{Theta} = \begin{pmatrix}
		(\gls{ELxhatj1}\gls{ELxhatj2}K)_{ij}
		&(\mathrm{EL}^1_{\hat{x}^{(j)}}\gls{Mm}^2_{\overline{x}_b}K)_{j}
		&(\mathrm{EL}^1_{\hat{x}^{(j)}} \mathrm{ev}^2_{\overline{x}_b}K)_j\\
		(\gls{Mm}^1_{\overline{x}_b}\mathrm{EL}^2_{\hat{x}^{(i)}}K)_{i}
		&\mathrm{Mm}^1_{\overline{x}_b}\mathrm{Mm}^2_{\overline{x}_b}K
		&\mathrm{Mm}^1_{\overline{x}_b} \mathrm{ev}^2_{\overline{x}_b}K\\
		(\mathrm{ev}^1_{\overline{x}_b}\mathrm{EL}^2_{\hat{x}^{(i)}}K)_{i}
		&\mathrm{ev}^1_{\overline{x}_b}\mathrm{Mm}^2_{\overline{x}_b}K
		&K(\overline{x}_b,\overline{x}_b).
	\end{pmatrix}
\end{equation}
The upper indices $1,2$ of the operator indicate their action on the first or second component of the kernel $K \colon \Omega \times \Omega \to \R$, i.e.\
\begin{equation}\label{eq:EL12}
\mathrm{EL}^1_{\hat{x}^{(j)}}\mathrm{EL}^2_{\hat{x}^{(i)}}K
=\mathrm{EL}_{\hat{x}^{(j)}}\big(\overline x\mapsto\mathrm{EL}_{\hat{x}^{(i)}}(\overline y \mapsto  K(\overline{x},\overline{y}))\big) \quad \in \R
\end{equation}
with analogous conventions for $\mathrm{Mm}$ and $\mathrm{ev}$.
Furthermore, we use the convention that when an operator $\mathrm{EL}$, $\mathrm{Mm}$, or $\mathrm{ev}$ is applied to functions with several components their application \mod{is} understood component-wise. With
\[
\mathcal{K}\Phi_b^{\mod{(}M\mod{)}}(\overline{x}) = \begin{pmatrix}
	\mathrm{EL}_{\hat{x}^{(1)}}K(\cdot,\overline{x}),
	& \ldots\;\mod{,} &
	\mathrm{EL}_{\hat{x}^{(M)}}K(\cdot,\overline{x}),
	&
	\mathrm{Mm}_{\overline{x}_b}K(\cdot,\overline{x}),
	& K(\overline{x}_b,\overline{x})
\end{pmatrix}^\top
\]
the conditional mean $\mod{\gls{LM}}$ of the posterior process $\xi_M$ is given as
\begin{equation}\label{eq:LPosterior}
	\mod{\gls{LM}} = {y_b^{\mod{(}M\mod{)}}}^\top {\Theta^{\dagger}} \mathcal{K}\Phi_b^{\mod{(}M\mod{)}},
\end{equation}
{where $\Theta^{\dagger}$ denotes the pseudo-inverse of $\Theta$.}
The conditional covariance operator $\mathcal{K}_{\Phi_b^{\mod{(}M\mod{)}}} \colon {U}^\ast \to U$ is given by
\begin{align}\label{eq:CovPosterior}
	&\psi\mathcal{K}_{\Phi_b^{\mod{(}M\mod{)}}}\phi
	= \psi\mathcal{K}\phi
	- (\psi\mathcal{K}{\Phi_b^{\mod{(}M\mod{)}}}^\top) {\Theta^{\dagger}} (\Phi_b^{\mod{(}M\mod{)}}\mathcal{K}\phi)
\end{align}
for any $\psi,\phi \in U^\ast$. Here
\begin{align*}
\psi\mathcal{K}_{\Phi_b^{\mod{(}M\mod{)}}}\phi &=\psi^1 \phi^2 K\\
\psi\mathcal{K}{\Phi_b^{\mod{(}M\mod{)}}}^\top &=	\begin{pmatrix}
		\psi^1\mathrm{EL}^2_{\hat{x}^{(2)}}K,
		& \ldots &
		\psi^1\mathrm{EL}^2_{\hat{x}^{(n)}}K,
		&
		\psi^1\mathrm{Mm}^2_{\overline{x}_b}K,
		& \psi^1 K(\cdot ,\overline{x}_b)
	\end{pmatrix}\\
\Phi_b^{\mod{(}M\mod{)}}\mathcal{K}\phi &=
	\begin{pmatrix}
		\mathrm{EL}^1_{\hat{x}^{(2)}}\phi^2K, &
		\ldots &
		\mathrm{EL}^1_{\hat{x}^{(n)}}\phi^2K, &
		\mathrm{Mm}^1_{\overline{x}_b}\phi^2K & 
		\phi^2 K(\overline{x}_b,\cdot )
	\end{pmatrix}^\top.
\end{align*}
Again, the upper indices $1,2$ of the linear functionals $\phi, \psi \in U^\ast$ denote actions on the first or second component of $K$, respectively.

The expressions ${y_b^{\mod{(}M\mod{)}}}^\top \Theta^{\dagger}$ and $\Theta^{\dagger} (\Phi_b^{\mod{(}M\mod{)}}\mathcal{K}\phi)$ in \eqref{eq:LPosterior} and \eqref{eq:CovPosterior}, respectively, are least-square solutions to the linear systems
\begin{equation}\label{eq:LinSysTheta}
\Theta z = y_b^{\mod{(}M\mod{)}} \quad \text{and} \quad 
\Theta Z = \Phi_b^{\mod{(}M\mod{)}} \mathcal{K}\phi
\end{equation}
for $z$ and $Z$. It is argued in \cref{app:CondExpandVar} and \cref{sec:ApplyPropCondDist} that these systems are solvable and that \eqref{eq:LPosterior} \mod{is} valid. Moreover, $\Theta^{\dagger} (\Phi_b^{\mod{(}M\mod{)}}\mathcal{K}\phi)$ and $\Theta^{\dagger} (\Phi_b^{\mod{(}M\mod{)}}\mathcal{K}\phi)$ can be substituted by any solution to the linear systems above without changing $L$ in \eqref{eq:LPosterior} or $\psi\mathcal{K}_{\Phi_b^{\mod{(}M\mod{)}}}\phi$ in \eqref{eq:CovPosterior}.

\begin{remark}[Computational {aspects}]\label{rem:CompAspects}
	{The size of the linear systems \eqref{eq:LinSysTheta} scales linearly with the number of data points and the dimension of the state-space. Thus the numerical complexity of solving the linear systems scales approximately cubically, when a direct method is used. The growth in computational complexity is typical for Gaussian process or kernel-based methods \cite{RasmussenWilliams2005}. 
	To tackle this, various approaches exist such as using kernels of finite band-width to promote sparsity of $\Theta$, importance sampling, and sparse Gaussian processes which are based on identifying inducing variables \cite{Titsias2009,QuinRasmussen2005}. An efficient method to approximate Cholesky factors of covariance matrices was presented in \cite{Schaefer2021}. Moreover, a diagonal regularisation technique involving an adaptive nudging term can be found in \cite[Appendix A]{OwhadiLearningPDEGP} in the context of solving pdes with Gaussian processes. A more specialised approach is \cite{Schaefer2021b}.
	In our numerical experiments (\cref{sec:Experiment}) we do not employ any specialised algorithm but use the command {\ttfamily factorize} of the package Julia/LinearAlgebra \cite{bezanson2017julia} on $\Theta$. Depending on the degeneracy of the symmetric matrix $\Theta$, {\ttfamily factorize} computes a Cholesky decomposition or a factorisation based on the Bunch-Kaufman algorithm \cite{BarwellGeorge1976,BunchKaufmann1977}. The factors are then stored and used whenever solving linear systems involving $\Theta$.
	}
\end{remark}

\begin{remark}[Equivalent minimisation problem]\label{rem:GPisMinProblem}
	{T}he conditional mean $L_{\mod{(M)}}$ of \eqref{eq:LPosterior} can alternatively be characterised as the minimiser of the following convex optimisation problem
	\begin{equation}\label{eq:LasConvexOpt}
	L_{\mod{(M)}}=\arg \min_{\mod{L} \in U, \Phi_b^{\mod{(}M\mod{)}}( \mod{L})=y_b^{\mod{(}M\mod{)}}} \|  \mod{L} \|_U,
	\end{equation}
	where $\| \mod{\cdot}  \|_U$ denotes the reproducing kernel Hilbert space norm. ({See \cref{thm:Extremizer} in \cref{app:GaussianField}.)} This will play an important role in the convergence proof in \cref{sec:ConvergenceAnalysis}. Besides the exploit for convergence proofs, formulation \eqref{eq:LasConvexOpt} could be used for the computation of the conditional stochastic processes for non-linear observations and normalisation conditions such as in the alternative regularisation of \cref{app:SymplVolNormalise} using techniques of \cite{OwhadiLearningPDEGP}.
\end{remark}

\begin{remark}[Further normalisation]\label{rem:MoreNormalisation}
	For consistency with \cref{prop:NormaliseL}, one may add
	$c_\tau \in \R \setminus \{0\}$ to $y_b^{\mod{(}M\mod{)}}$ and 
	the normalising condition $(\mathrm{EL}_{(\hat{x}_\tau)})_k$ to $\Phi^{\mod{(}M\mod{)}}_b$ for $\hat{x}_\tau = (x_\tau,\dot x_\tau, \ddot x_\tau)$ that is not a motion and $k \in \{1,\ldots,d\}$.
	While it is realistic to assume knowledge of a data point $\hat{x}_\tau$ that is not a motion (e.g.\ $\hat x = (\overline x^{(1)},\ddot x^{(1)}+1)$ in systems with non-degenerate true Lagrangian), fixing an index $k$ a priori may cause a restriction as to which Lagrangians can be approximated or cause poor scaling of the posterior process.
	Thus, we propose to leave out this condition in the definition of the posterior process.
	One may rather verify $c_\tau \not =0$ a posteriori to check validity of the assumptions of \cref{prop:NormaliseL}.
	Moreover, \cref{app:SymplVolNormalise} discusses an alternative normalisation based on symplectic volume forms. It can be compared to approaches to learn Lagrangians with neural networks \cite{DLNNPDE}.
\end{remark}

\subsubsection{Application}
The conditional mean $L_{\mod{(M)}}$ \eqref{eq:LPosterior} of the posterior Gaussian process $\xi_{\mod{M}}$
serves as an approximation to a true Lagrangian, from which approximations of geometric structures such as symplectic structure and Hamiltonians can be derived. Moreover, uncertainties of linear observables $\psi \in U^\ast$ can be quantified as the variance of $\psi(\xi_{\mod{M}})$, which can be computed as $\psi\mathcal{K}_{\Phi_b^{\mod{(}M\mod{)}}}\psi$ using \eqref{eq:CovPosterior}.
In the numerical experiments, standard deviations will be computed for the random variables $\mathrm{Ham}(\xi_{\mod{M}})(\overline{x})$ for $\overline{x} \in \Omega$
and for $\mathrm{EL}(\xi_{\mod{M}})(\hat{x})$, where $\hat{x} = (x,\dot x,\ddot x)$ is a motion of the system to $L_{\mod{(M)}}$.

\subsection{Gaussian fields for discrete Lagrangians}\label{dec:GPLd}

The data-driven framework for learning of discrete Lagrangians is in close analogy to the presented framework for continuous Lagrangians.
\mod{Recall that the use of discrete Lagrangian models does not cause any discretisation error as true discrete Lagrangian models exist for motions that are governed by a continuous variational principle for a first order Lagrangian, see \cref{Intro:DiscreteLagrangianModel}.}
Instead of repeating the discussion \mod{of \cref{sec:MLContinuous}}, we explain the required modifications and reinterpretations in the following.
A rigorous discussion and justification of the applicability of the theory of Gaussian fields is postponed to \cref{sec:ConvergenceAnalysisLd}.

In the setting of discrete Lagrangians, $\gls{Omega} \subset \R^d \times \R^d$ is an open, bounded subset containing elements denoted by $\gls{xbar} = (x_0,x_1)$. Observed data corresponds to a collection of $M$ triples of snapshots $\gls{xhat}^{(j)} = (x_0^{(j)},x_1^{(j)},x_2^{(j)})$ of motions of a variational dynamical system, where $(x_0^{(j)},x_1^{(j)}) \in \Omega$ and $(x_1^{(j)},x_2^{(j)}) \in \Omega$ for all $j$. The snapshot time (discretisation parameter) $\mod{\Delta t}>0$ is constant (also see \cref{fig:DataOscillatorDiscrete}).
The goal is to identify a discrete Lagrangian $L_{d,\mod{(M)}} \colon \Omega \to \R$ such that discrete motions that fulfil the discrete Euler-Lagrange equations $\mathrm{DEL}(L_{d,\mod{(M)}}) =0$ approximate true motions.

\mod{In analogy to \eqref{eq:PhibM}, let $\overline{x}_b \in \Omega$ and consider the functional $\gls{Phibm}\colon \mathcal{C}^1(\overline{\Omega}) \to (\R^d)^M \times \R^d \times \R$ defined as}
\begin{equation}\label{eq:PhibmDiscrete}
	\gls{Phibm} = (\gls{DELxhatj},\ldots,\mathrm{DEL}_{\hat{x}^{(M)}},\mathrm{Mm^-}_{\overline{x}_b},\mathrm{ev}_{\overline{x}_b}).
\end{equation}
\mod{Moreover, to $p_b \in \R^d$, $c_b \in \R$ let}
\begin{equation}\label{eq:ybmDiscrete}
	\mod{\gls{ybm} = (\underbrace{0,\ldots,0}_{M \text{ times }0\in \R^d},p_b,c_b).}
\end{equation}

Consider a twice continuously differentiable kernel $K \colon \Omega\times \Omega \to \R$ with RKHS $U$.
{We consider the following assumptions that are fulfilled when the observed system is governed by the Euler--Lagrange equations to a non-degenerate Lagrangian $L_{\mod{d}} \in \mathcal{C}^1(\overline{\Omega})$ and when $K$ is the square exponential kernel $K(\overline{x},\overline{y}) = \exp(-\|\mod{\overline x}-\mod{\overline y}\|^2/l)$, $l>0$ and $\Omega$ is an \mod{open, bounded, and} locally Lipschitz domain:
	\begin{assumption}\label{assumption:MLSetupAssumptionLd}
		Assume that
		\[
		\{
		L_d \in \mathcal{C}^1(\overline{\Omega}) \, | \, \Phi_b^{\mod{(}M\mod{)}}(L_{\mod{d}}) = y_b^{\mod{(}M\mod{)}}\} \cap U \not =\emptyset
		\]
		and that the RKHS $U$ to kernel $K$ embeds continuously into $\mathcal{C}^1(\overline{\Omega})$. Let $K$ be twice continuously differentiable.
	\end{assumption}
}

With the reinterpretation of $\Omega$ and of training data points $\hat x^{(j)}$ we can follow the framework for continuous Lagrangians replacing $\mathrm{EL}$ by $\mathrm{DEL}$ and $\mathrm{Mm}$ by $\mathrm{Mm}^-$ (or $\mathrm{Mm}^+$). In particular, this leads to


\begin{equation}\label{eq:ThetaMatLd}
	\gls{Theta} = \begin{pmatrix}
		(\gls{DELxhatj1}\gls{DELxhatj2}K)_{ij}
		&(\mathrm{DEL}^1_{\hat{x}^{(j)}}\gls{MmMinus}^2_{\overline{x}_b}K)_{j}
		&(\mathrm{DEL}^1_{\hat{x}^{(j)}} \mathrm{ev}^2_{\overline{x}_b}K)_j\\
		(\gls{MmMinus}^1_{\overline{x}_b}\mathrm{DEL}^2_{\hat{x}^{(i)}}K)_{i}
		&\mathrm{Mm^-}^1_{\overline{x}_b}\mathrm{Mm^-}^2_{\overline{x}_b}K
		&\mathrm{Mm^-}^1_{\overline{x}_b} \mathrm{ev}^2_{\overline{x}_b}K\\
		(\mathrm{ev}^1_{\overline{x}_b}\mathrm{DEL}^2_{\hat{x}^{(i)}}K)_{i}
		&\mathrm{ev}^1_{\overline{x}_b}\mathrm{Mm^-}^2_{\overline{x}_b}K
		&K(\overline{x}_b,\overline{x}_b).
	\end{pmatrix}
\end{equation}
(cf.~\eqref{eq:ThetaMat})
and a conditioned process that is a Gaussian process $\mathcal{N}(\mod{\gls{LdM}},\mathcal{K}_{\Phi_b^{\mod{(}M\mod{)}}})$ with posterior mean
\begin{equation}\label{eq:LdPosterior}
	\mod{\gls{LdM}} = {y_b^{\mod{(}M\mod{)}}}^\top {\Theta}^{\dagger} \mathcal{K}\Phi_b^{\mod{(}M\mod{)}}
\end{equation}
(cf.~\eqref{eq:LPosterior}).
Again, the upper index $1,2$ of the operators $\mathrm{DEL}$, $\mathrm{Mm}^-$, $\mathrm{ev}$ denote on which input element of $K$ they act. 
The conditional covariance operator $\mathcal{K}_{\Phi_b^{\mod{(}M\mod{)}}} \colon U^\ast \to U$ is defined for any $\psi,\phi \in U^\ast$ by
\begin{align}\label{eq:CovPosteriorLd}
	&\psi\mathcal{K}_{\Phi_b^{\mod{(}M\mod{)}}}\phi
	= \psi\mathcal{K}\phi
	- (\psi\mathcal{K}{\Phi_b^{\mod{(}M\mod{)}}}^\top) {\Theta}^{\dagger} (\Phi_b^{\mod{(}M\mod{)}}\mathcal{K}\phi).
\end{align}
Here
\begin{align*}
\psi\mathcal{K}_{\Phi_b^{\mod{(}M\mod{)}}}\phi	&=\psi^1 \phi^2 K\\
	\psi\mathcal{K}{\Phi_b^{\mod{(}M\mod{)}}}^\top &= \begin{pmatrix}
		\psi^1\mathrm{DEL}^2_{\hat{x}^{(2)}}K,
		& \ldots &
		\psi^1\mathrm{DEL}^2_{\hat{x}^{(n)}}K,
		&
		\psi^1\mathrm{Mm^-}^2_{\overline{x}_b}K,
		& \psi^1 K(\cdot ,\overline{x})
	\end{pmatrix}\\
	\Phi_b^{\mod{(}M\mod{)}}\mathcal{K}\phi &=
	\begin{pmatrix}
		\mathrm{DEL}^1_{\hat{x}^{(2)}}\phi^2K &
		\ldots &
		\mathrm{DEL}^1_{\hat{x}^{(n)}}\phi^2K &
		\mathrm{Mm^-}^1_{\overline{x}_b}\phi^2K &
		\phi^2 K(\overline{x},\cdot )
	\end{pmatrix}^\top.
\end{align*}

{To obtain \eqref{eq:LdPosterior} and \eqref{eq:CovPosteriorLd} we have (as in the continuous case) applied general theory as recalled in \cref{prop:CondDistrGeneral} in \cref{app:CondExpandVar}. Indeed, conditions for the applicability of \cref{prop:CondDistrGeneral} are verified in \cref{prop:CondDistPropLdCheck} (\cref{app:CondExpandVar}). }

\section{Numerical experiments}\label{sec:Experiment}


\subsection{Continuous Lagrangians}
Consider dynamical data $\hat x^{(j)}=(x^{(j)},\dot x^{(j)},\ddot x^{(j)})$, $j=1,\ldots,M$ of the coupled harmonic oscillator $L_\rf \colon T\R^2 \to \R$ with
\begin{equation}\label{eq:LOscillator}
L_\rf (x,\dot x) = \frac 12 \|\dot x\|^2 - \frac 12 \|x\|^2 + \alpha x^0x^1, \quad x=(x^0,x^1) \in \R^2, (x,\dot x) \in T\R^2
\end{equation}
with coupling constant $\alpha =0.1$. Here $\overline x^{(j)}=(x^{(j)},\dot x^{(j)})$, $j=1,\ldots,M$ are the first $M$ elements of a Halton sequence in the hypercube $\Omega = \mod{(}-1,1\mod{)}^4 \subset T\R^2$.
We use \mod{the square exponential kernel} $K(\overline{x},\overline{y}) = \exp\left(-\mod{\|}\overline{x}-\overline{y}\mod{\|}^2\mod{/2}\right)$ as a kernel function in all experiments. For $M \in \N$ we obtain a posteriori Gaussian processes denoted by $\xi_M \in \mathcal{N}(L_{\mod{(M)}},\mathcal{K}_M)$ modelling Lagrangians for the dynamical system. We present experiments with $M \in \{80,300\}$. In the following $\gls{var}$ refers to the variance of a random variable (applied component wise when the random variable is $\R^d$-valued). Moreover, $\mod{\gls{g}}_{\overline{x}}(L_{\mod{(M)}})$ refers to the solution of $\mathrm{EL}(L_{\mod{(M)}})(\overline{x},\ddot x)=0$ for $\ddot x \in \R^2$.

\Cref{fig:DataOscillator} displays the location of training data in $\Omega$ projected to the $(x^0,x^1)$-plane.
\Cref{fig:VariancesOscillator} compares the variances of $\mathrm{EL}_{\hat x}(\xi_{M})$ for $M=80,300$ for points of the form $\hat x = (\overline{x},\ddot x)$ with $\overline{x}=(x^0,x^1,0,0) \in \Omega$ and $\overline x = (x^0,0,\dot x^0,0)\in \Omega$ with $\ddot x = \mod{g}_{\overline{x}}(L_{\mod{(M)}})$.
One observes that the variance decreases as more data points are used.
This experiments suggests that the method can be used in combination with an adaptive sampling technique to sample new data points in regions of high model uncertainty.

\begin{figure}
	\centering
	\includegraphics[width=0.4\linewidth]{"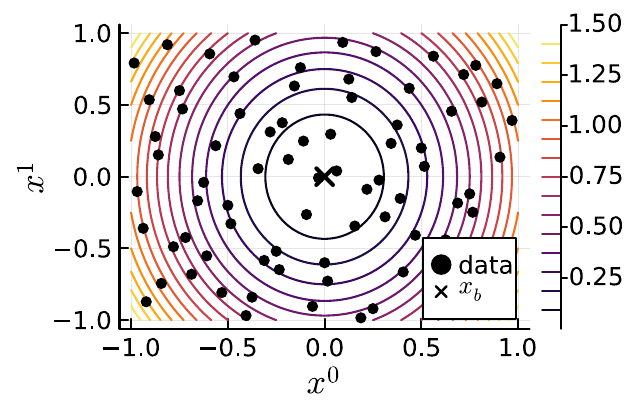"}
	\includegraphics[width=0.4\linewidth]{"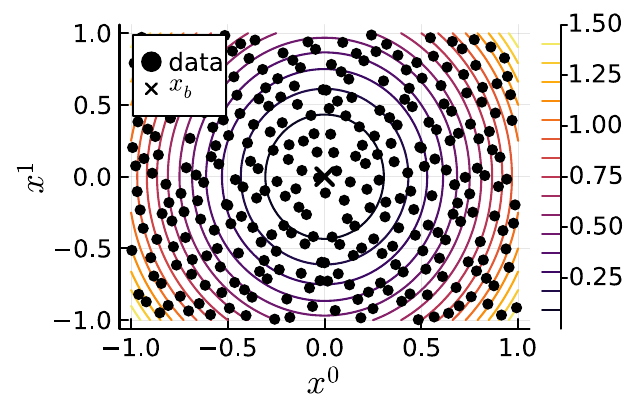"}
	\caption{Training data points projected to the $(x^0,x^1)$-plane of $\xi_{80}$ (left) and $\xi_{300}$ (right).}\label{fig:DataOscillator}
\end{figure}

\begin{figure}
	\centering
	\includegraphics[width=0.24\linewidth]{"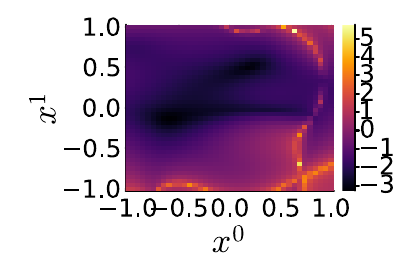"}
	\includegraphics[width=0.24\linewidth]{"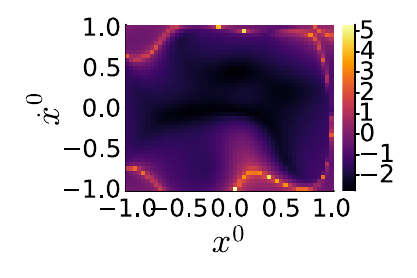"}
	\includegraphics[width=0.24\linewidth]{"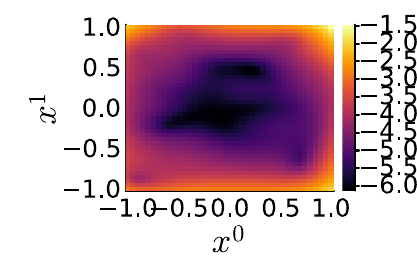"}
	\includegraphics[width=0.24\linewidth]{"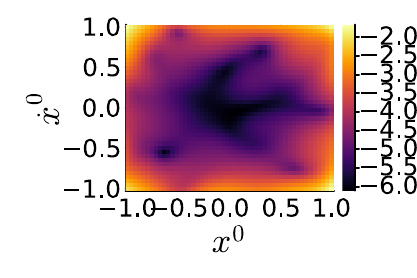"}
	\caption{Plots of variances $\log_{10}(\|\mathrm{var}(\mathrm{EL}(\xi_{M}))\|)$ for $M=80$ (two left plots) and $M=300$ (two right plots) over $(x^0,x^1,0,0)$-plane and $(x^0,0,\dot x^0,0)$-plane. (Ranges of colourbars vary.)}\label{fig:VariancesOscillator}
\end{figure}

\Cref{fig:MotionsOscillator} shows a motion computed by solving\footnote{Computations were performed using DifferentialEquations.jl\cite{rackauckas2017differentialequations}. Comparison with a trajectory computed using the variational midpoint rule \cite{MarsdenWestVariationalIntegrators} (step-size $\mod{\Delta t}=0.01$) shows a maximal difference in the $x$-component smaller than $3.5 \times 10^{-4}$ ($M=300$) along the trajectory.} $\mathrm{EL}(L_{\mod{(M)}})=0$ with initial data $\overline{x} = (0.2, 0.1, 0, 0)$ on the time interval $[0,100]$.
In the plots of the first row, colours indicate the norm of the variance of $\mathrm{EL}(\xi_M)$ along the computed trajectories.
For $M=300$ the trajectory is close to the reference solution while largely different for $M=80$. This is consistent with the lower variance for $M=300$ compared to the experiment with $M=80$.
The plots of the dynamics of $L_{\mod{(}300\mod{)}}$ (bottom row of \cref{fig:MotionsOscillator}) show divergence of the computed motion from the reference solution towards the end of the time interval building up to a difference in $x^0$ component of about $0.1$ at $t=100$. (We will see later that a discrete model model performs better in this experiment.) However, the qualitative features of the motion are captured.

\begin{figure}
	\centering
	\includegraphics[width=0.45\linewidth]{"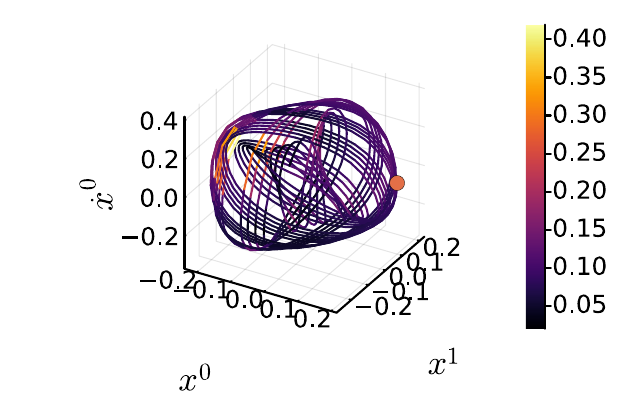"}
	\includegraphics[width=0.45\linewidth]{"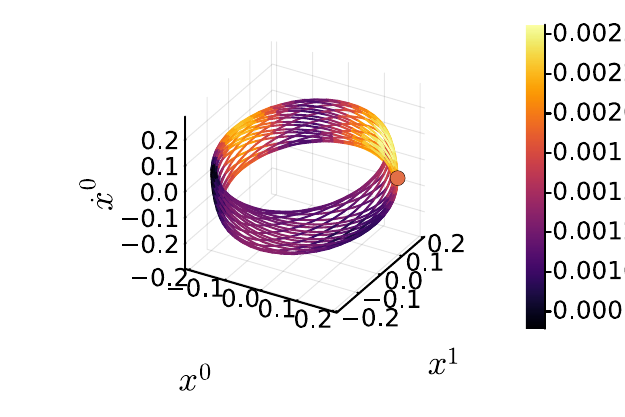"}
	\includegraphics[width=0.45\linewidth]{"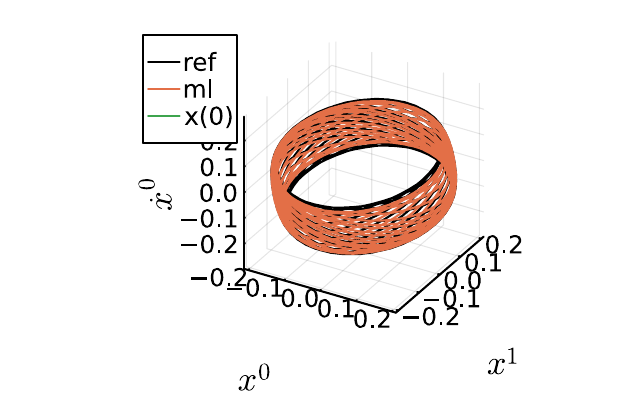"}
	\includegraphics[width=0.45\linewidth]{"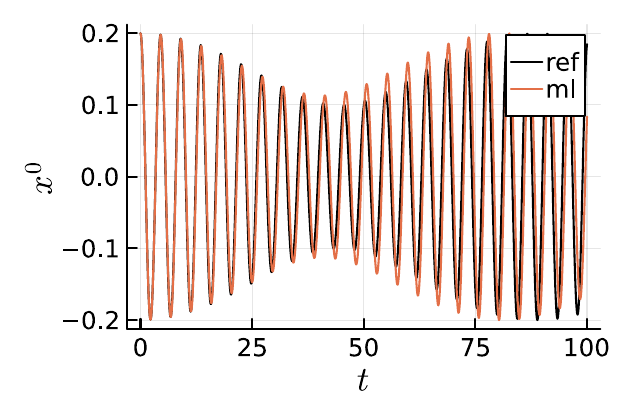"}
	\caption{ Top row: motion of $\xi_{80}$ (left) and $\xi_{300}$ (right) with variance $\|\mathrm{var}(\mathrm{EL}(\xi_{M}))\|$ encoded as colours (ranges of colourbars vary). Bottom row: 
		motions of $\xi_{300}$ compared to reference. 
	}\label{fig:MotionsOscillator}
\end{figure}

\Cref{fig:HamiltonianOscillator} shows the Hamiltonian $H_M=\mathrm{Ham}(L_{\mod{(M)}})$ as well as $H_M \pm 0.2 \sigma_{H_M}$. Here $\sigma_{H_M}$ denotes the standard deviation $\sqrt{\mathrm{var}\mathrm{Ham}(\xi_M)}$. We observe a clear decrease of the standard deviation as $M$ increases from 80 to 300.

\begin{figure}
	\includegraphics[width=0.45\linewidth]{"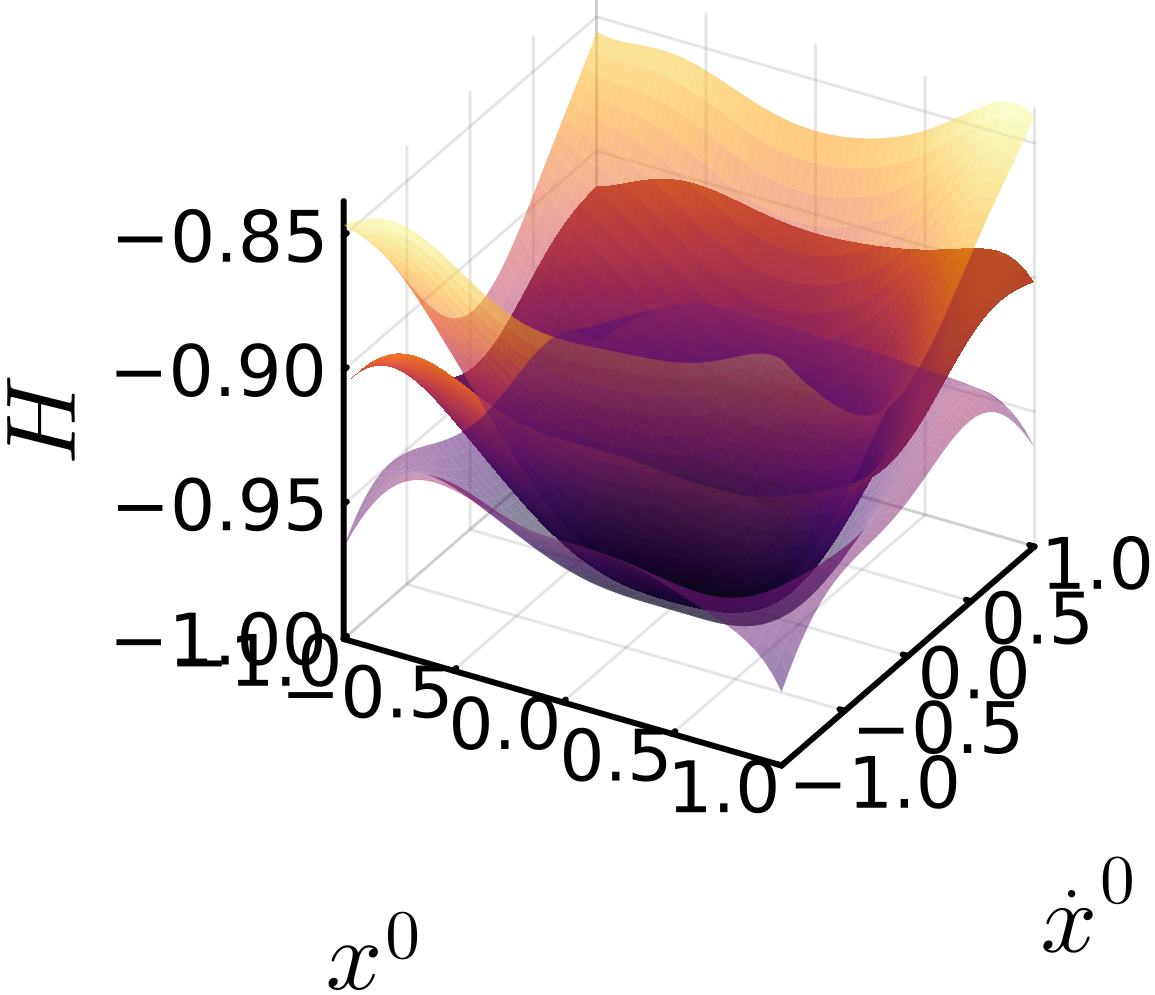"}
	\includegraphics[width=0.45\linewidth]{"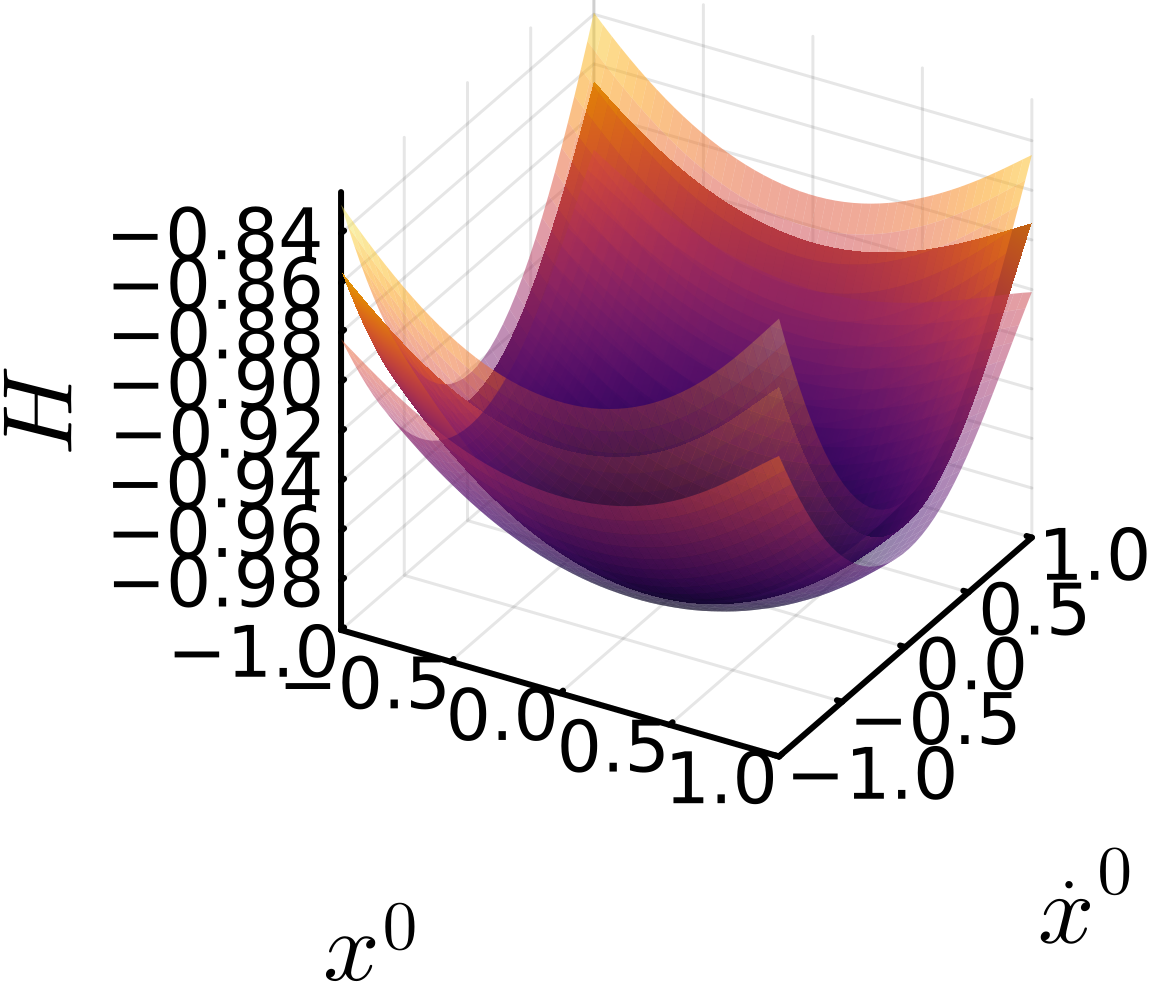"}
	\caption{Mean of Hamiltonian $\mathrm{Ham}(\xi_{80})$, $\mathrm{Ham}(\xi_{300})$ over $(x^0,0,\dot x^0,0)$ plus/minus 20\% standard deviation.}\label{fig:HamiltonianOscillator}
\end{figure}

\Cref{fig:AccOscillator} displays the error in the prediction of $\ddot{x}$ for points $\overline{x}=(x^0,x^1,0,0) \in \Omega$ and $\overline x = (x^0,0,\dot x^0,0)\in \Omega$.
As the magnitudes of errors vary widely, $\log_{10}$ is applied before plotting, i.e.\ we show the quantity
\[
\log_{10}\|\mod{g}_{\overline{x}}(L_{\mod{(M)}})-\mod{g}_{\overline{x}}(L_\rf)\|_{\R^2}.
\]
One sees a clear decrease in error as $M$ is increased from 80 to 300.

\begin{figure}
	\centering
	\includegraphics[width=0.4\linewidth]{"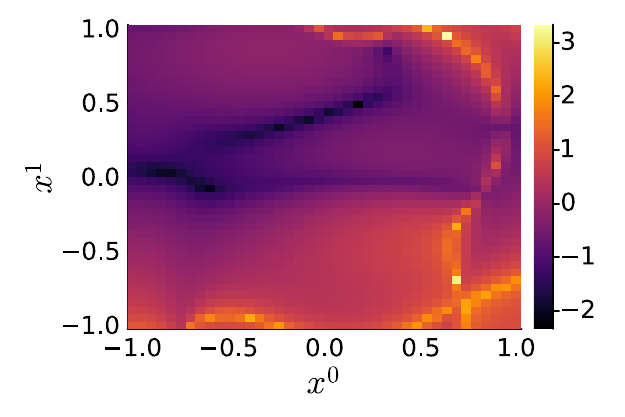"}
	\includegraphics[width=0.4\linewidth]{"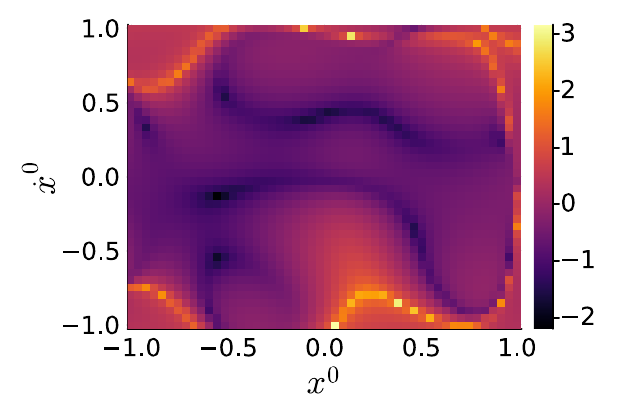"}
	\includegraphics[width=0.4\linewidth]{"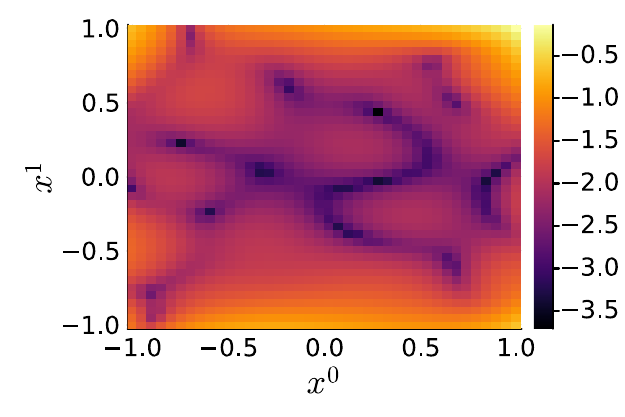"}
	\includegraphics[width=0.4\linewidth]{"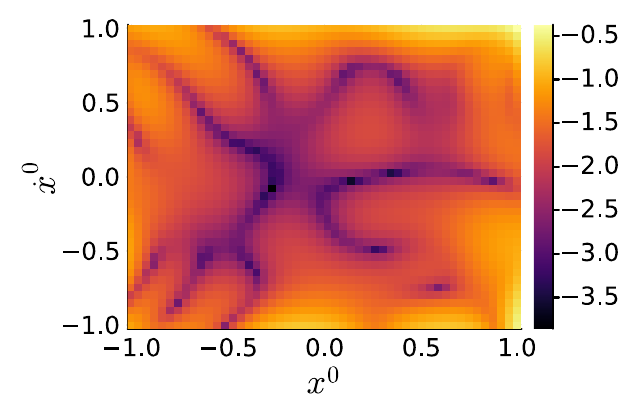"}
	\caption{$\log_{10}$ norm of error of predicted acceleration $\ddot x$ for $\mod{g}(\xi_{M})$ over $x^0,x^1$ plane and $x^0,\dot x^0$ plane for $M=80$ (\mod{top}) and $M=300$ (\mod{bottom}). (The ranges of colourbars vary.)}\label{fig:AccOscillator}
\end{figure}

\mod{Another numerical experiments confirming the convergence of the method and indicating unbounded convergence rates for smooth dynamical systems will be presented in \cref{fig:Convergence} in the context of theoretical convergence results in \cref{sec:ConvRatesContLagrangian}.}

\subsection{Discrete Lagrangian}

Now we consider dynamical data $\hat{x}^{(j)} = (x^{(j)}_0, x^{(j)}_1, x^{(j)}_2)$ where $x^{(j)}_0$, $x^{(j)}_1$, $x^{(j)}_2$ are snapshots of true trajectories at times $t$, $t+\mod{\Delta t}$, $t+2\mod{\Delta t}$, respectively, with $j=1,\ldots,M$. Here $\mod{\Delta t}=0.1$ and, again, $M\in \{80,300\}$.
For data generation, we consider data $(x,p) \in [-1,1]^4 \subset T^\ast \R^2$ from a Halton sequence from where we integrate $L_\rf$ from $[0,3\mod{\Delta t}]$ using the 2nd order accurate variational midpoint rule \cite{MarsdenWestVariationalIntegrators} with step-size $\mod{\Delta t}_{\mathrm{internal}} = \mod{\Delta t}/10$. These dynamics are considered as true for the purpose of this experiment. Training data is visualised in \cref{fig:DataOscillatorDiscrete}.
\begin{figure}
	\centering
	\includegraphics[width=0.49\linewidth]{"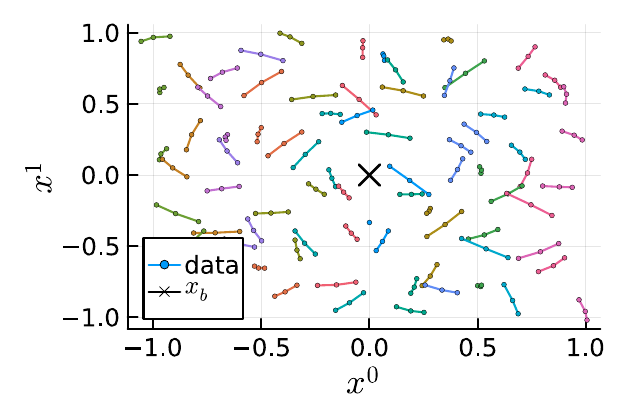"}
	\includegraphics[width=0.49\linewidth]{"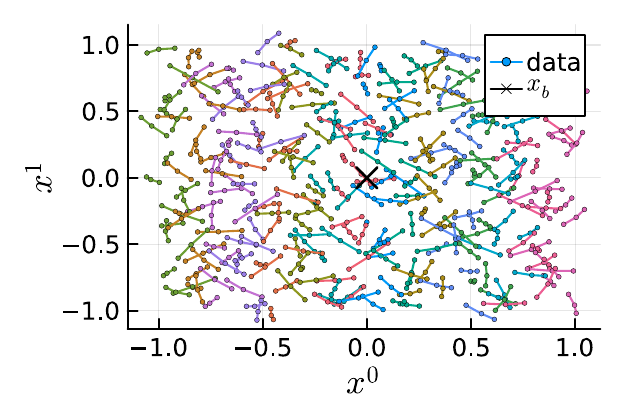"}
	\caption{Training data. Each line connects snapshots points that constitute a training data point $\hat x$. Left: $M=80$, right: $M=300$.}\label{fig:DataOscillatorDiscrete}
\end{figure}

\Cref{fig:VariancesOscillatorDiscrete} (in analogy to \cref{fig:VariancesOscillator}) shows how variance decreases as more data points become available. For the plots, $(x_0,p_0) \in T^\ast\R^2$ are used to compute $\hat x=(x_0,x_1,x_2)$ using $L_\rf$. Here $p$ refers to the conjugate momentum of $L_\rf$. The plots display heatmaps of $\log_{10}(\|\mathrm{var}(\mathrm{DEL}_{\hat x}(\xi_{M}))\|)$.
\begin{figure}
	\centering
	\includegraphics[width=0.4\linewidth]{"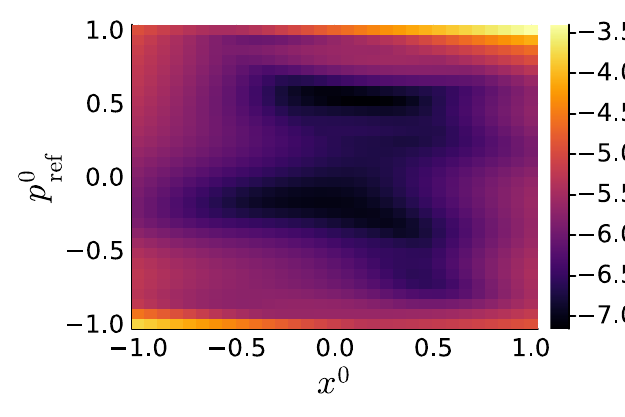"}
	\includegraphics[width=0.4\linewidth]{"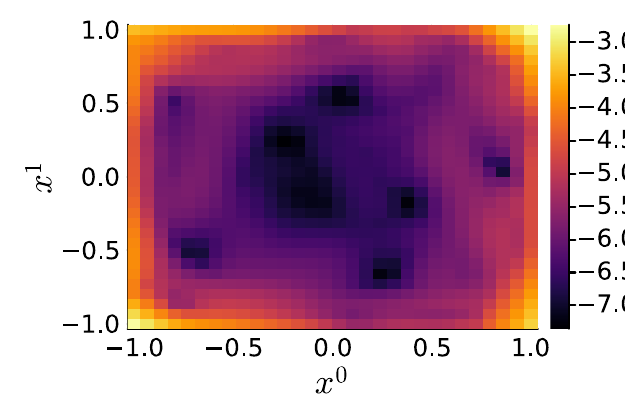"}
	\includegraphics[width=0.4\linewidth]{"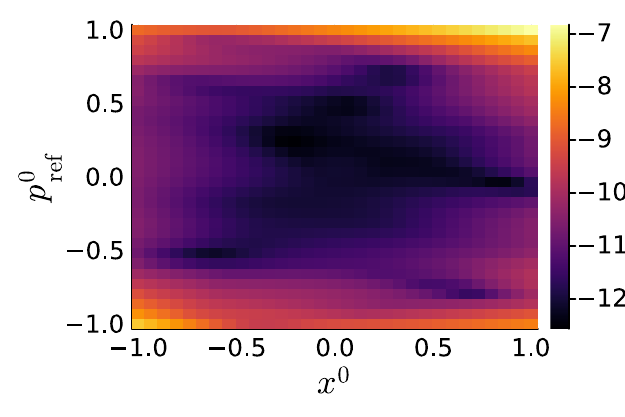"}
	\includegraphics[width=0.4\linewidth]{"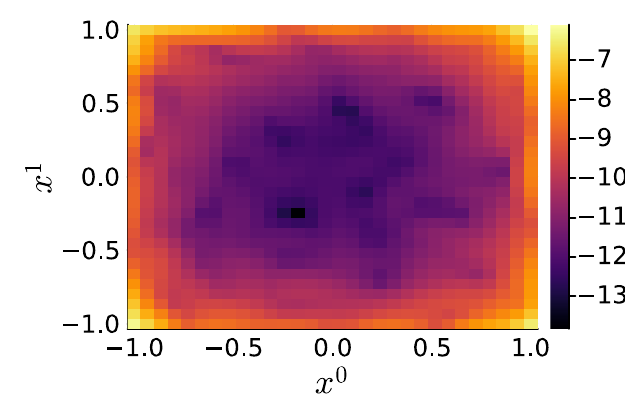"}
	\caption{Plots of variances $\log_{10}(\|\mathrm{var}(\mathrm{EL}(\xi_{M}))\|)$ for $M=80$ (\mod{top}) and $M=300$ (\mod{bottom}) over $(x,p_\rf)=(x^0,x^1,0,0)$-plane and $(x,p_\rf)=(x^0,0,p_\rf^0,0)$-plane. (Ranges of colourbars vary.)}\label{fig:VariancesOscillatorDiscrete}
\end{figure}

\Cref{fig:MotionsOscillatorDiscrete} shows a motion for $t \in [0,100]$ of $\xi_{300}$ with same initial data as in \cref{fig:MotionsOscillator}.
With a maximal error in absolute norm smaller than $0.00043$ it is visually indistinguishable from the true motion.
In the plot to the left, data for $\dot x^0$ was approximated to second order accuracy in $\mod{\Delta t}$ with the central finite differences method.

Comparing \cref{fig:MotionsOscillatorDiscrete} and \cref{fig:MotionsOscillator}, it is interesting to observe that with the same amount of data the discrete model performs better than the continuous model for predicting motions.
\begin{figure}
	\centering
	\includegraphics[width=0.49\linewidth]{"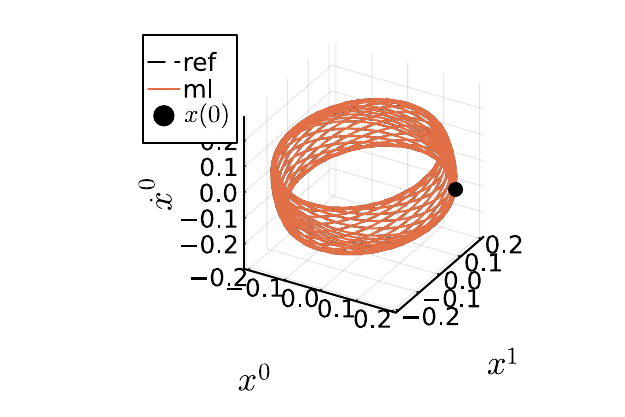"}
	\includegraphics[width=0.49\linewidth]{"d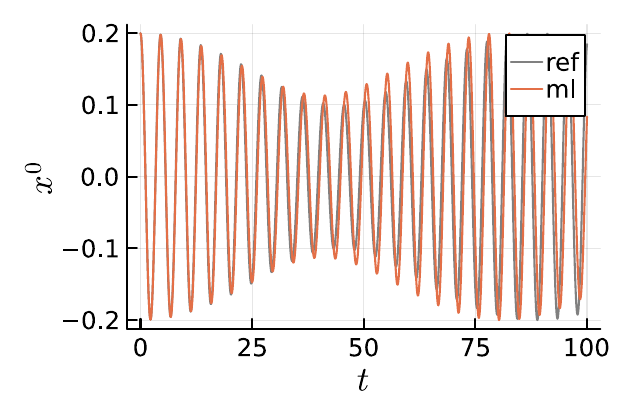"}
	\caption{The motion of $\xi_{300}$ and the true motion are indistinguishable.}\label{fig:MotionsOscillatorDiscrete}
\end{figure}

\paragraph{Reproducibility}
Source code of the experiments can be found at
\url{https://github.com/Christian-Offen/Lagrangian_GP}.

\section{Convergence Analysis}\label{sec:ConvergenceAnalysis}

This section contains {a theoretical convergence analysis of the considered methods. In \cref{sec:ConvergenceAnalysisL,sec:ConvergenceAnalysisLd}}
convergence theorems for regular continuous Lagrangians (\cref{thm:ConvergenceThm}) and discrete Lagrangians (\cref{thm:ConvergenceThmLdTemporalNonDegenerate}) in the infinite-data limit {are provided} as observations become topologically dense, i.e.\ as the maximal distance between data points converges to zero.
{Moreover, the convergence rates of continuous and discrete Lagrangian models are analysed in \cref{sec:ConvergenceRates}.}
 

\subsection{{Convergence of} continuous Lagrangian {models}}\label{sec:ConvergenceAnalysisL}

\subsubsection{Convergence theorem (continuous, temporal evolution)}

\begin{theorem}\label{thm:ConvergenceThm}
	Let $\Omega \subset T\R^d \cong \R^d \times \R^d$ be an open, bounded non-empty domain.
	Consider a sequence of observations $\{(x^{(j)},\dot{x}^{(j)},\ddot{x}^{(j)})\}_j \subset \Omega \times \R^d$ of a dynamical system governed by the Euler--Lagrange equation of an (unknown) non-degenerate Lagrangian $L_\rf \in \mathcal{C}^2(\gls{OmegaBar})$ (definition of $\mathcal{C}^2(\overline{\Omega})$ below).
	Assume that $\{(x^{(j)},\dot{x}^{(j)})\}_j \subset \Omega $ is topologically dense.
	Let $K$ be a 4-times continuously differentiable kernel on $\Omega$, $\overline{x_b} \in \Omega$, $\mod{c}_b \in \R$, $p_b \in \R$ and assume that $L_\rf$ is contained in the reproducing kernel Hilbert space $(U, \| \cdot \|_U)$ to $K$ and fulfils the normalisation condition
	\begin{equation}\label{eq:NormalisationThm}
		\gls{PhiN}(L_\rf) = (p_b,\mod{c}_b) \quad \text{with} \quad
		\Phi_N(L)= \left( \frac{\p L}{\p \dot x}(\overline{x}_b),L(\overline{x}_b) \right).
	\end{equation}
	Assume that $U$ embeds continuously into $\mathcal{C}^2(\overline{\Omega})$.
	Let $\xi \in \mathcal{N}(0,\mathcal K)$ be a canonical Gaussian \mod{field} on $U$ (see \cref{sec:RKHSSetup} \mod{or \cref{app:GaussianField}}). Then the sequence of conditional means $\gls{Lj}$ of $\xi$ conditioned on the first $j$ observations and on the normalisation conditions
	\begin{equation}
		\mathrm{EL}(\xi)(x^{(i)},\dot{x}^{(i)},\ddot{x}^{(i)}) =0\, (\forall i\le j),\quad
		\Phi_N(\xi) = (p_b,\mod{c}_b)
	\end{equation}
	converges in $\| \cdot \|_U$ and in $\| \cdot \|_{\mathcal{C}^2(\overline{\Omega})}$ to a Lagrangian $\gls{LInf} \in U$ that is
	\begin{itemize}
	 \item consistent with the normalisation $\Phi_N(L_{(\infty)}) = (p_b,\mod{c}_b)$
	 \item consistent with the dynamics, i.e.\ $\mathrm{EL}(L_{(\infty)})(\hat x) \mod{=0}$ for all $\hat x = (x,\dot{x},\ddot{x})$ with $(x,\dot{x}) \in \Omega$ and $\mathrm{EL}(L_\rf)(\hat x)=0$.
	 \item Moreover, $L_{(\infty)}$ is the unique minimiser of $\| \cdot \|_U$ among all Lagrangians \mod{in $U$} with these properties.
	\end{itemize}
\end{theorem}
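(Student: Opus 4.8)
The plan is to exploit the variational characterisation of the conditional mean recorded in \cref{rem:GPisMinProblem}: each $L_{(j)}$ is the unique element of minimal $\|\cdot\|_U$-norm in the closed affine subspace
\[
C_j = \{L \in U : \mathrm{EL}(L)(\hat x^{(i)}) = 0 \ (\forall i \le j), \ \Phi_N(L) = (p_b,c_b)\}.
\]
First I would record that each defining functional is bounded on $U$; this is exactly where the continuous embedding $U \hookrightarrow \mathcal{C}^2(\overline\Omega)$ and the smoothness of $K$ enter, since $\mathrm{EL}_{\hat x^{(i)}}$ involves second derivatives. Boundedness makes $C_j$ genuinely closed and convex, so the minimiser exists and is unique. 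Writing $V_j$ for the associated homogeneous space (the kernel of the constraint map), feasibility of the true Lagrangian --- $L_\rf \in C_j$ for every $j$, as it reproduces the data and satisfies the normalisation --- yields $C_j = L_\rf + V_j$ and the projection formula $L_{(j)} = (I - P_{V_j})L_\rf = P_{V_j^\perp} L_\rf$. In particular $\|L_{(j)}\|_U \le \|L_\rf\|_U$ uniformly.

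Next I would deduce convergence in $U$ from the nestedness $V_1 \supseteq V_2 \supseteq \cdots$, equivalently $V_1^\perp \subseteq V_2^\perp \subseteq \cdots$. By the standard Hilbert-space fact that orthogonal projections onto an increasing chain of closed subspaces converge strongly to the projection onto the closure of the union, and since $\overline{\bigcup_j V_j^\perp} = (\bigcap_j V_j)^\perp =: V_\infty^\perp$, the sequence $L_{(j)} = P_{V_j^\perp} L_\rf$ converges in $\|\cdot\|_U$ to $L_{(\infty)} := P_{V_\infty^\perp} L_\rf = L_\rf - P_{V_\infty} L_\rf$. (The Cauchy property follows from $\|L_{(m)} - L_{(n)}\|_U^2 = \|L_{(m)}\|_U^2 - \|L_{(n)}\|_U^2$ for $m > n$, whose right-hand side is a convergent monotone sequence.) Convergence in $\mathcal{C}^2(\overline\Omega)$ is then immediate from the embedding.

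For the properties of the limit I would treat feasibility and minimality in turn. Since $P_{V_\infty} L_\rf \in V_\infty$ while $L_\rf$ satisfies the normalisation and annihilates $\mathrm{EL}$ at every observation, we get $L_{(\infty)} \in L_\rf + V_\infty =: C_\infty$, so $L_{(\infty)}$ meets $\Phi_N(L_{(\infty)}) = (p_b,c_b)$ and vanishes under $\mathrm{EL}$ at every data point. The decisive step --- and the one I expect to be the main obstacle --- is upgrading consistency on the \emph{countable} data set to consistency on the entire continuum of motions. Here I would use non-degeneracy of $L_\rf$ to describe the motions as the graph $\{(x,\dot x, g_\rf(x,\dot x)) : (x,\dot x) \in \Omega\}$ of the continuous acceleration field $g_\rf$, note that $(x,\dot x) \mapsto \mathrm{EL}(L_{(\infty)})(x,\dot x, g_\rf(x,\dot x))$ is continuous on $\Omega$ because $L_{(\infty)} \in \mathcal{C}^2(\overline\Omega)$, and observe that it vanishes on the dense set $\{(x^{(i)},\dot x^{(i)})\}$; topological density forces it to vanish on all of $\Omega$. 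This establishes consistency with the dynamics (and in fact shows $C_\infty$ already equals the full motion-constraint set $D_\infty$ of all $L \in U$ consistent with the normalisation and with $\mathrm{EL}(L)=0$ on every motion).

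Finally, minimality transfers: $L_{(\infty)}$ minimises $\|\cdot\|_U$ over $C_\infty$ by the same projection argument applied to $C_\infty = L_\rf + V_\infty$, and since $L_{(\infty)}$ lies in the smaller set $D_\infty \subseteq C_\infty$, it is a fortiori the minimiser over $D_\infty$; uniqueness follows because $D_\infty$ is a nonempty closed convex subset of a Hilbert space. The only genuinely delicate points are thus the boundedness of the second-order observation functionals (needed for closedness of the constraint sets, supplied by the regularity and embedding hypotheses) and the density-to-continuum passage just described.
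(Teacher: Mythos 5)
Your proposal is correct, and it reaches the conclusion by a genuinely different route than the paper. The paper's \cref{prop:ConvergenceInB} runs a direct-method argument: the norms $\|L_{(j)}\|_U$ increase monotonically and are bounded by $\|L_{(\infty)}\|_U$ (the minimiser over the continuum constraint set $A^{(\infty)}$, nonempty because $L_\rf\in U$), reflexivity of $U$ (Banach--Alaoglu and Eberlein--\v{S}mulian) extracts a weakly convergent subsequence, weak lower semicontinuity of the norm plus the density argument of \cref{lem:LdaggerinA} --- which requires a careful double-limit exchange, since weak convergence must be passed through the evaluation functionals before the dense-set limit is taken --- identifies every weak subsequential limit with $L_{(\infty)}$, and norm convergence together with a subsequence-of-subsequences argument upgrades this to strong convergence of the whole sequence. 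You instead exploit the affine structure that the paper leaves implicit: since $L_\rf$ is feasible for every $j$, you get the explicit representation $L_{(j)}=P_{V_j^\perp}L_\rf$ with nested closed subspaces $V_j$, the Pythagorean identity $\|L_{(m)}-L_{(n)}\|_U^2=\|L_{(m)}\|_U^2-\|L_{(n)}\|_U^2$ makes the sequence Cauchy outright, and the strong limit is $P_{V_\infty^\perp}L_\rf$ with $V_\infty=\bigcap_j V_j=\bigl(\overline{\bigcup_j V_j^\perp}\,\bigr)^\perp$; your density-to-continuum step is then applied to a \emph{strong} ($\mathcal{C}^2$) limit, where only continuity of $\overline{x}\mapsto\mathrm{EL}(L_{(\infty)})(\overline{x},g_\rf(\overline{x}))$ is needed, which is simpler than the paper's version for the weak limit, and it simultaneously establishes $\bigcap_j A^{(j)}=A^{(\infty)}$, so your limit coincides with the paper's $L_{(\infty)}$ and the minimality and uniqueness claims transfer as you describe. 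What your route buys: no weak topologies at all, the uniform a priori bound $\|L_{(j)}\|_U\le\|L_\rf\|_U$, and a quantitative Cauchy estimate. What the paper's route buys: it uses only that the constraint sets are closed, convex, nested and that $A^{(\infty)}$ is nonempty --- never that they are translates of subspaces by a \emph{common} feasible point --- so it is the more robust template (e.g.\ for convex but non-affine constraints such as the alternative regularisation of \cref{app:SymplVolNormalise}). Both arguments rest on the same two external inputs, which you correctly isolate: boundedness of the second-order evaluation functionals via $U\hookrightarrow\mathcal{C}^2(\overline{\Omega})$, and the identification of conditional means with minimal-norm interpolants (\cref{thm:Extremizer}, \cref{rem:GPisMinProblem}; strictly, one should also note that the range condition on $y_b^{(M)}$ in \cref{thm:Extremizer} holds precisely because $L_\rf$ is feasible, cf.\ \cref{prop:CondDistPropLCheck}).
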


\begin{remark}\label{rem:RP}
If $\mod{c}_b=0$ and $p_b=0$, then the sequence $\gls{Lj}$ is constantly zero with limit $\gls{LInf} \equiv0$. It is necessary to set $(\mod{c}_b,p_b) \not = (0,0)$ to approximate a non-degenerate Lagrangian.
\end{remark}

\begin{remark}\label{rem:RegularityKernel}
The regularity assumptions of the kernel (four times continuously differentiable) is required for the interpretation of $L_{(j)}$ as a conditional mean of a Gaussian process and for its convenient computation. 
{It can be relaxed to the condition that $\frac{\p^{|\alpha|+|\beta|} K(x,y)}{(\p x^1)^{\alpha_1} \ldots (\p x^d)^{\alpha_d} \p (y^1)^{\beta_1} \ldots (\p y^d)^{\beta_d} }$ for $|\alpha|,|\beta| \le 2$, $x,y \in \Omega$ exists and is continuous on $\overline{\Omega}$. Here $|\alpha| = \alpha_1+\ldots+\alpha_d$, $|\beta| = \beta_1+\ldots+\beta_d$. 
}
\end{remark}

\subsubsection{Formal setting and proof (continuous, temporal evolution)}\label{sec:FormalSettingCont}

Let $\Omega \subset T\R^d$ be an open, bounded, non-empty domain.
We consider the space of $m$-times continuously differentiable functions that extend to the topological closure $\gls{OmegaBar}$ 
\begin{equation}\label{eq:CmOmegabar}
\gls{Cm}(\overline{\Omega},\R^k) = \{ f \in \mathcal{C}^m(\Omega,\R^k) \; | \; \p^\alpha f \textrm{ extends continuously to }\overline\Omega \; \forall |\alpha| \le m\}, \quad m \in \N_0.
\end{equation}
Here
$\p^\alpha f = \frac{\p^{|\alpha|} f}{(\p x^1)^{\alpha_1} \ldots (\p x^d)^{\alpha_d} \p (\dot x^1)^{\dot \alpha_1} \ldots (\p \dot x^d)^{\dot \alpha_d} }$
denotes the partial derivative with respect to coordinates $\overline x = (x,\dot x) = (x^1,\ldots,x^d,\dot x^1,\ldots,\dot x^d)$ for a multi-index $\alpha = (\alpha_1,\ldots,\alpha_d,\dot \alpha_1,\ldots,\dot \alpha_d)$ with $|\alpha| = \alpha_1 + \ldots +\alpha_d+\dot \alpha_1+\ldots+\dot \alpha_d$. 
The space is equipped with the norm
\begin{equation}
\| f\|_{\mathcal{C}^m(\overline{\Omega},\R^k)} = \max_{0 \le |\alpha | \le m} \sup_{\overline{x} \in \Omega} \|\p^\alpha f(\overline{x}) \|.´
\end{equation}
Here $\|\p^\alpha f(\overline{x}) \|$ denotes the Euclidean norm on $\R^k$ for $|\alpha|=1$ or an induced operator norm for $|\alpha|>1$.
The space $\mathcal{C}^m(\overline{\Omega},\R^k)$ is a Banach space \cite[§ 4]{SobolevSpacesAdams2003}.
We will use the shorthand $\mathcal{C}^m(\overline{\Omega})=\mathcal{C}^m(\overline{\Omega},\R^1)$.

Assume that on a dense, countable subset $\gls{Omega0} = \{ \overline{x}^{(j)} = ({x}^{(j)},\dot{x}^{(j)})\}_{j=1}^\infty \subset \Omega$ we have observations of acceleration data $\ddot{x}^{(j)}$ of a dynamical system generated by an (a priori unknown) Lagrangian $L_\rf \in \mathcal{C}^2(\overline \Omega)$, which is non-degenerate, i.e.\ for all $(x,\dot x) \in \overline{\Omega}$ the matrix $\frac{\p^2 L_\rf}{\p \dot x \p \dot x} (x,\dot x)$ is invertible, and the induced function $g_\rf \in \mathcal{C}^0(\overline{\Omega},\R^d)$ with
\begin{equation}\label{eq:AccLref}
	\gls{gref}(x,\dot{x}) = \left( \frac{\p^2 L_\rf}{\p \dot x \p \dot x} (x,\dot x)\right)^{-1}	
	\left(
	\frac{\p L_\rf}{ \p  x} (x,\dot{x}) - \frac{\p^2 L_\rf}{\p x \p \dot x} (x,\dot{x}) \cdot \dot x
	\right)
\end{equation}
recovers $\ddot{x}^{(j)} = g_\rf( \overline{x}^{(j)}) = g_\rf({x}^{(j)},\dot{x}^{(j)})$.

\begin{lemma}\label{lem:PhiInfBounded}
The linear functional $\Phi^{(\infty)} \colon \mathcal{C}^2(\overline{\Omega}) \to \mathcal{C}^0(\overline\Omega,\R^d)$
with
\begin{equation}
\begin{split}
\Phi^{(\infty)} (L) (x,\dot{x})
&= \mathrm{EL}(L)(x,\dot{x},g_\rf(x,\dot{x}))\\
&=\frac{\p^2 L}{\p \dot x \p \dot x} (x,\dot{x}) \cdot g_\rf(x,\dot x)
+ \frac{\p^2 L}{\p x \p \dot x} (x,\dot{x}) \cdot \dot x
-\frac{\p L}{ \p  x} (x,\dot{x}) 
\end{split}
\end{equation}
is bounded. 
\end{lemma}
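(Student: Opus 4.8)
The plan is to exhibit a single constant $C>0$, independent of $L$, such that $\|\Phi^{(\infty)}(L)\|_{\mathcal{C}^0(\overline{\Omega},\R^d)} \le C\,\|L\|_{\mathcal{C}^2(\overline{\Omega})}$ for every $L \in \mathcal{C}^2(\overline{\Omega})$. Linearity of $\Phi^{(\infty)}$ is immediate from its defining formula, so boundedness of the operator norm is all that remains to verify. The starting observation is that $\overline{\Omega}$ is compact, since $\Omega$ is open, bounded and non-empty; consequently every continuous function on $\overline{\Omega}$ attains a finite supremum.

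First I would record the two coefficient bounds that drive the estimate. By hypothesis $g_\rf \in \mathcal{C}^0(\overline{\Omega},\R^d)$, so it is bounded: set $G := \|g_\rf\|_{\mathcal{C}^0(\overline{\Omega},\R^d)} < \infty$. Likewise the coordinate map $(x,\dot x)\mapsto \dot x$ is continuous on the compact set $\overline{\Omega}$, so $R := \sup_{(x,\dot x)\in\overline{\Omega}}\|\dot x\| < \infty$. These are the only multipliers appearing in the three terms of $\Phi^{(\infty)}(L)$, and crucially neither depends on $L$.

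Next, fix $L \in \mathcal{C}^2(\overline{\Omega})$ and estimate $\Phi^{(\infty)}(L)$ pointwise. For each $(x,\dot x)\in\overline{\Omega}$ the triangle inequality together with submultiplicativity of the induced operator norm gives
\[
\|\Phi^{(\infty)}(L)(x,\dot x)\|
\le \left\|\frac{\p^2 L}{\p\dot x\,\p\dot x}\right\|\,G
+ \left\|\frac{\p^2 L}{\p x\,\p\dot x}\right\|\,R
+ \left\|\frac{\p L}{\p x}\right\|.
\]
Each of the three $L$-dependent factors involves only partial derivatives of $L$ of order $\le 2$: the entries of each Hessian block are individual second partials whose suprema over $\overline{\Omega}$ are controlled by $\|L\|_{\mathcal{C}^2(\overline{\Omega})}$, and passing from these entries to the block's operator norm costs only a fixed factor depending on the dimension $d$. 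Hence every factor is dominated by a dimensional constant times $\|L\|_{\mathcal{C}^2(\overline{\Omega})}$. Taking the supremum over $(x,\dot x)\in\overline{\Omega}$ then yields the claim with a constant $C$ built from $G$, $R$, and $d$.

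Finally I would confirm that $\Phi^{(\infty)}(L)$ genuinely lands in $\mathcal{C}^0(\overline{\Omega},\R^d)$: the second-order partials of $L$ extend continuously to $\overline{\Omega}$ by the definition of $\mathcal{C}^2(\overline{\Omega})$, the coefficient $g_\rf$ and the coordinate map are continuous on $\overline{\Omega}$, and sums and products of continuous maps are continuous. I do not expect a genuine obstacle: the proof is essentially norm bookkeeping. The only point demanding care is relating the operator norms of the Hessian blocks to the $\mathcal{C}^2$-norm, and the observation that $g_\rf$ is assumed \emph{merely} continuous — which is precisely why the target space is $\mathcal{C}^0$ and why no derivative of $g_\rf$ is ever needed.
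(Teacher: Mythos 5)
Your proposal is correct and follows essentially the same route as the paper, whose proof is a one-line application of the triangle inequality yielding $\|\Phi^{(\infty)}(L)\|_{\mathcal{C}^0(\overline\Omega,\R^d)} \le \bigl(\|g_\rf\|_{\mathcal{C}^0(\overline\Omega,\R^d)} + \sup_{(x,\dot x)\in\Omega}\|\dot x\| + 1\bigr)\|L\|_{\mathcal{C}^2(\overline\Omega)}$, using exactly your two coefficient bounds on $g_\rf$ and on $\dot x$ over the bounded set $\Omega$. Your additional bookkeeping (the dimensional constant relating Hessian-block norms to the $\mathcal{C}^2$-norm, and the check that the image lies in $\mathcal{C}^0(\overline\Omega,\R^d)$) merely makes explicit what the paper leaves implicit.
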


\begin{proof}
A direct application of the triangle inequality shows
\[\|\Phi^{(\infty)} (L)\|_{\mathcal{C}^0(\overline\Omega,\R^d)} \le \left(\| g_\rf\|_{\mathcal{C}^0(\overline\Omega,\R^d)} + \sup_{(x,\dot x) \in \Omega}\|\dot x\|+1\right)\| L\|_{_{\mathcal{C}^2(\overline\Omega)}}.\]
\end{proof}

Since for each $\overline{x}$ the evaluation functional $\mathrm{ev}_{\overline{x}} \colon f \mapsto f(\overline{x})$ on $\mathcal{C}^0(\overline{\Omega},\R^d)$ is bounded, the following functions constitute bounded linear functionals for $j \in \N$:
\begin{align*}
\Phi_j &\colon \mathcal{C}^2(\overline\Omega) \to \R^d, \quad \quad\Phi_j(L)=\Phi^{(\infty)} (L)(\overline{x}^{(j)})\\
\Phi^{(j)} &\colon \mathcal{C}^2(\overline\Omega) \to (\R^d)^j, \quad \Phi^{(j)} =(\Phi_1,\ldots,\Phi_j).
\end{align*}

For a reference point $\overline{x}_b \in \Omega$ and for $p_b \in \R^d$, $\mod{c}_b \in \R$ we define the bounded linear functional
\begin{align}\label{eq:PhiNContinuous}
\gls{PhiN} &\colon \mathcal{C}^2(\overline\Omega) \to \R^{d+1}, \quad \quad \Phi_N(L)= \left( \frac{\p L}{\p \dot x}(\overline{x}_b),L(\overline{x}_b) \right),
\end{align}
related to our normalisation condition, the shorthands $\Phi_b^{(k)} = (\Phi_1,\ldots,\Phi_k,\Phi_N)$ and $\Phi_b^{(\infty)} = (\Phi^{(\infty)} ,\Phi_N)$, and the data
\begin{align*}
y^{(k)}_{\mod{b}} &= (0,\ldots,0,p_b,\mod{c}_b) \in (\R^d)^k \times \R^d\times \R\\
y^{(\infty)}_{\mod{b}} &= (0,p_b,\mod{c}_b) \in \mathcal{C}^0(\overline\Omega,\R^d) \times \R^d\times \R.
\end{align*}

\begin{assumption}\label{ass:LinB}
Assume that there is a Hilbert space $U$ with continuous embedding $U \hookrightarrow \mathcal{C}^2(\overline\Omega)$ such that
\[\{L \in \mathcal{C}^2(\overline\Omega) \, | \, \Phi_b^{(\infty)}(L) = y^{(\infty)}_{\mod{b}} \} \cap U \not = \emptyset{.}\]
In other words, $U$ is assumed to contain a Lagrangian consistent  with the normalisation and underlying dynamics.
\end{assumption}

The affine linear subspace{s}
\begin{align*}
A^{(j)} &= \{ L \in U \, | \, \Phi_b^{(j)}(L) = y^{(j)}_{\mod{b}} \} \quad (j \in \N)\\
A^{(\infty)} &= \{ L \in U \, | \, \Phi_b^{(\infty)}(L) = y^{(\infty)}_{\mod{b}} \}
\end{align*}
are closed, \mod{convex,} and non empty in $U$ by \cref{ass:LinB} and by the boundedness of $\Phi_b^{(j)}$ and $\Phi_b^{(\infty)}$ on $U \hookrightarrow \mathcal{C}^2(\overline{\Omega})$.
\mod{By the Hilbert projection theorem \cite[§12.3]{Rudin1991}, the following minimisers exist and are uniquely defined:}
\begin{equation}\label{eq:MinProblems}
	\begin{split}
L_{(j)} &\mod{:=} \arg \min_{L \in A^{(j)}} \| L \|_U \\
L_{(\infty)} &\mod{:=} \arg \min_{L \in A^{(\infty)}} \| L \|_U{.}
\end{split}
\end{equation}
Here $\| \cdot \|_U $ denotes the norm in $U$.

\begin{remark}\label{rem:AssumptionTrueForK}
{%
	To an open, non-empty set $\mod{\Omega} \subset \R^{\mod{d'}}$, $m\in \N \cup \{0\}$ denote by $W^{m,2}(\mod{\Omega})=W^m(\mod{\Omega})$ the Sobolev space
	\[
	W^m(\mod{\Omega})=\{ u \in L^2(\mod{\Omega}) \, | \, \forall \alpha \in \N^{\mod{d'}}, |\alpha| \le m, \p^\alpha u \in L^2(\mod{\Omega})\},
	\]
	with Sobolev norm
	\[
	 \| u \|_{W^m} = \sqrt{\sum_{|\alpha|\le m} \int_\Omega (\p^\alpha u (x) )^2 \d x }
	\]
	where $L^2(\mod{\Omega})$ denotes the space of square integrable functions on $\mod{\Omega}$. Here the derivative $\p^\alpha u$ is meant in a distributional sense \cite{SobolevSpacesAdams2003}.} %
In the machine learning setting, $U$ is the reproducing kernel Hilbert space related to a kernel $K \colon \Omega \times \Omega \to \R$. Assume the domain of $\Omega\mod{\subset \R^{d'}=\R^{2d}}$ is locally Lipschitz. When $K$ is the squared exponential kernel, for instance, its reproducing kernel Hilbert space embeds into any Sobolev space $W^m(\Omega)$ ($m>1$) \cite[Thm.4.48]{ChristmannSteinwart2008RKHS}. In particular , it embeds into $W^m(\Omega)$ with $m>2+\mod{d}$, which is embedded into $\mathcal{C}^2(\overline{\Omega})$ by the Sobolev embedding theorem \cite[§4]{SobolevSpacesAdams2003}.
\mod{By \cref{thm:Extremizer} of \cref{app:CondExpandVar} (also see  \cref{rem:GPisMinProblem})} the element $L_{(j)}$ \mod{($j \in \mathbb{N}$)} from \eqref{eq:MinProblems} coincides with the conditional mean of the Gaussian process $\xi$ conditioned on $\Phi_b^{(j)}(\xi) = y^{(j)}$.
\end{remark}

\begin{proposition}\label{prop:ConvergenceInB}
The minima $L_{(j)}$ converge to $L_{(\infty)}$ in the norm $\| \cdot \|_U$ and, thus, in $\| \cdot \|_{\mathcal C^2(\overline{\Omega})}$.
\end{proposition}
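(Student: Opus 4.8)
The plan is to discard the probabilistic language entirely and treat the statement as a pure Hilbert-space fact about projections of the origin onto a nested family of closed convex sets, using the variational characterisation \eqref{eq:MinProblems} of the conditional means. First I would record the nesting structure. Each additional observation appends a linear constraint, so the feasible sets satisfy $A^{(1)} \supseteq A^{(2)} \supseteq \cdots$; and since every observation point $\overline{x}^{(j)}$ lies in $\Omega$, any $L \in A^{(\infty)}$ satisfies $\Phi_j(L) = \Phi^{(\infty)}(L)(\overline{x}^{(j)}) = 0$, giving $A^{(\infty)} \subseteq A^{(j)}$ for all $j$. Consequently $\|L_{(j)}\|_U$ is non-decreasing in $j$ (minimising over a shrinking set) and bounded above by $\|L_{(\infty)}\|_U$ (since $L_{(\infty)}$ is feasible for every finite problem), hence $\|L_{(j)}\|_U$ converges.

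The crucial step, and the one place where the topological density of $\Omega_0 = \{\overline{x}^{(j)}\}_j$ is genuinely used, is the identity $\bigcap_{j} A^{(j)} = A^{(\infty)}$. The inclusion $\supseteq$ is the nesting just noted. For $\subseteq$, suppose $L \in \bigcap_j A^{(j)}$, so that $\Phi^{(\infty)}(L)(\overline{x}^{(j)}) = 0$ for every $j$. By \cref{lem:PhiInfBounded} the function $\Phi^{(\infty)}(L)$ belongs to $\mathcal{C}^0(\overline{\Omega}, \R^d)$ and is therefore continuous; vanishing on the dense set $\Omega_0$ forces $\Phi^{(\infty)}(L) \equiv 0$ on $\overline{\Omega}$, i.e.\ $L \in A^{(\infty)}$. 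I expect this to be the main obstacle to phrase cleanly, as it is exactly here that the finitely many pointwise Euler--Lagrange constraints are upgraded to the global constraint defining $A^{(\infty)}$; the boundedness established in \cref{lem:PhiInfBounded} is what licenses the passage to the limit.

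With the intersection identified, convergence follows from a parallelogram-law argument. For $j < k$ we have $L_{(k)} \in A^{(k)} \subseteq A^{(j)}$, so the midpoint $\tfrac{1}{2}(L_{(j)} + L_{(k)})$ lies in the convex set $A^{(j)}$ and hence has norm at least $\|L_{(j)}\|_U$. Combined with the parallelogram identity this yields
\begin{equation}
\left\| \tfrac{1}{2}\left(L_{(j)} - L_{(k)}\right) \right\|_U^2 \le \tfrac{1}{2}\left( \|L_{(k)}\|_U^2 - \|L_{(j)}\|_U^2 \right),
\end{equation}
whose right-hand side tends to zero as $j,k \to \infty$ because $\|L_{(j)}\|_U^2$ is a convergent, hence Cauchy, real sequence. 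Thus $(L_{(j)})_j$ is Cauchy in the complete space $U$ and converges to some $L_\ast \in U$.

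Finally I would identify $L_\ast$ with $L_{(\infty)}$. Each $A^{(j)}$ is closed and contains the tail $(L_{(k)})_{k \ge j}$, so $L_\ast \in A^{(j)}$ for every $j$, whence $L_\ast \in \bigcap_j A^{(j)} = A^{(\infty)}$ by the density argument above. Feasibility of $L_\ast$ for the limiting problem then gives $\|L_{(\infty)}\|_U \le \|L_\ast\|_U = \lim_j \|L_{(j)}\|_U \le \|L_{(\infty)}\|_U$, so that $\|L_\ast\|_U = \|L_{(\infty)}\|_U$; uniqueness of the minimiser of $\|\cdot\|_U$ over $A^{(\infty)}$ forces $L_\ast = L_{(\infty)}$. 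Convergence in $\|\cdot\|_U$ transfers to convergence in $\|\cdot\|_{\mathcal{C}^2(\overline{\Omega})}$ via the assumed continuous embedding $U \hookrightarrow \mathcal{C}^2(\overline{\Omega})$, completing the argument.
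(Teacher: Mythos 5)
Your proof is correct, but it takes a genuinely different route from the paper's. The paper argues by weak compactness: it uses reflexivity of $U$ (via Banach--Alaoglu and Eberlein--\v{S}mulian) to extract a weakly convergent subsequence of $(L_{(j)})_j$, shows through a double-limit argument (\cref{lem:LdaggerinA}) that the weak limit lies in $A^{(\infty)}$, identifies it with $L_{(\infty)}$ using weak lower semi-continuity of the norm and uniqueness of the minimiser, upgrades weak to strong convergence via norm convergence, and finally passes from subsequences to the full sequence by a subsequence-of-subsequences argument. You instead give a direct, quantitative Cauchy argument: since $L_{(k)} \in A^{(j)}$ for $k \ge j$ and $A^{(j)}$ is convex with minimiser $L_{(j)}$, the parallelogram law yields $\bigl\| \tfrac{1}{2}(L_{(j)}-L_{(k)})\bigr\|_U^2 \le \tfrac{1}{2}\bigl(\|L_{(k)}\|_U^2 - \|L_{(j)}\|_U^2\bigr)$, and monotone boundedness of the norms makes the sequence Cauchy, entirely avoiding weak topologies. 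This also leads you to prove the cleaner set identity $\bigcap_j A^{(j)} = A^{(\infty)}$ (by continuity of $\Phi^{(\infty)}(L)$ on $\overline{\Omega}$ and density of $\Omega_0$), whereas the paper only establishes the inclusion $A^{(\infty)} \subseteq \bigcap_j A^{(j)}$ and verifies feasibility of the weak limit separately; your strong limit lands in each closed set $A^{(j)}$ trivially, so the identification step is simpler. One pedantic point: membership in $A^{(\infty)}$ also requires the normalisation constraint $\Phi_N(L)=(p_b,c_b)$, which you use implicitly (it holds since $L \in A^{(1)}$ already); worth a half-sentence. Trade-offs: your argument exploits the Hilbert-space parallelogram law and would not survive in a general reflexive Banach space, where the paper's compactness argument still works; in the RKHS setting at hand, however, yours is more elementary and even gives an explicit bound on $\|L_{(j)}-L_{(k)}\|_U$ in terms of the norm gap $\|L_{(k)}\|_U^2-\|L_{(j)}\|_U^2$, which the paper's qualitative argument does not provide.
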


\begin{proof}
The sequence of affine spaces $A^{(1)} \supseteq A^{(2)} \supseteq A^{(3)} \supseteq \ldots$ is monotonously decreasing and $A^{(\infty)} \subseteq \bigcap_{j=1}^\infty A^{(j)}$.
Therefore, the sequence $L_{(j)}$ is monotonously increasing and its norm $\|L_{(j)}\|_U$ is bounded from above by $\|L_{(\infty)}\|_U$.
Since $U$ is reflexive, there exists a subsequence $(L_{(j_i)})_{i \in \N}$ that weakly converges to some $L^\dagger_{(\infty)} \in U$.
(This follows from the Banach-Alaoglu theorem and the Eberlein-\v{S}mulian theorem \cite{Diestel1984}.)
By the weak lower semi-continuity of the norm, we obtain
\begin{equation}\label{eq:WeakLimitUpperBound}
\| L^\dagger_{(\infty)} \|_U \le \liminf_{i \to \infty} \| L_{(j_i)} \|_U \le \| L_{(\infty)} \|_U.
\end{equation}

\begin{lemma}\label{lem:LdaggerinA}
The weak limit $L^\dagger_{(\infty)}$ of $(L_{(j_i)})_{i \in \N}$ is contained in $A^{(\infty)}$.
\end{lemma}

Before providing the proof of \cref{lem:LdaggerinA}, we show how this allows us to complete the proof of \cref{prop:ConvergenceInB}.

As $L^\dagger_{(\infty)} \in A^{(\infty)}$, we have $\| L_{(\infty)} \|_U \le \| L^\dagger_{(\infty)} \|_U$ since $L_{(\infty)}$ is the global minimiser of the minimisation problem of \eqref{eq:MinProblems}.
Together with \eqref{eq:WeakLimitUpperBound} we conclude $\| L^\dagger_{(\infty)} \|_U = \| L_{(\infty)} \|_U$ and, by the uniqueness of the minimiser $L_{(\infty)}$, the equality $ L^\dagger_{(\infty)}  =  L_{(\infty)}$. Thus, we have proved weak convergence $L_{(j_i)} \rightharpoonup  L_{(\infty)}$.

Together with the lower semi-continuity of the norm, and since $L_{(j_i)}$ is monotonously increasing and bounded by $\| L_{(\infty)} \|_U$, we have
\[
\| L_{(\infty)} \|_U \le \liminf_{i \to \infty} \| L_{(j_i)} \|_U
\le
\limsup_{i \to \infty} \| L_{(j_i)} \|_U
\le 
\| L_{(\infty)} \|_U
\]
such that $\lim_{i \to \infty} \| L_{(j_i)} \|_U = \| L_{(\infty)} \|_U$. Together with $L_{(j_i)} \rightharpoonup  L_{(\infty)}$ we conclude strong convergence $L_{(j_i)} \to L_{(\infty)}$ in the Hilbert space $U$.

The particular weakly convergent subsequence $(L_{(j_i)})_{i \in \N}$ of $(L_{(j)})_j$ was arbitrary. 
Thus, any weakly convergent subsequence of $(L_{(j)})_j$ converges strongly against $L_{(\infty)}$.
It follows that any subsequence of $(L_{(j)})_j$ has a subsequence that converges to $L_{(\infty)}$. This implies that the whole series $(L_{(j)})_j$ converges to $L_{(\infty)}$.

It remains to prove \cref{lem:LdaggerinA}.

\begin{proof}[Proof of \cref{lem:LdaggerinA}]
Let $\overline{x}\in \Omega$. As the sequence $\Omega_0 = (\overline{x}^{(m)} )_{m=1}^\infty$ is dense in $\Omega$, there exists a subsequence $(\overline{x}^{(m_l)} )_{l=1}^\infty$ converging to $\overline{x}$. 
We have
\begin{align}\label{eq:UseContinuity}
\Phi^{(\infty)}_b(L^\dagger_{(\infty)}) (\overline{x})
& = \lim_{l \to \infty} \Phi^{(\infty)}_b(L^\dagger_{(\infty)}) (\overline{x}^{(m_l)}) \\ \label{eq:UseWeakConvergence}
&=\lim_{l \to \infty} \underbrace{ \lim_{i \to \infty} \Phi^{(\infty)}_b (L_{(j_i)}) (\overline{x}^{(m_l)})}_{\stackrel{(\ast)}=0} =0.
\end{align}
For this, in \eqref{eq:UseContinuity} we use that $\Phi^{(\infty)}_b(L^\dagger_{(\infty)}) \in \mathcal{C}^0(\overline{\Omega})$.
Equality in \eqref{eq:UseWeakConvergence} follows because each projection to a component of $\Phi^{(\infty)}_b(\cdot) (\overline{x}^{(m_l)}) \colon U \to \R^d \times \R^{d+1}$ constitutes a bounded linear functional on $U$ and the sequence $(L_{(j_i)})_{i \in \N}$ converges weakly to $L^\dagger_{(\infty)}$.
Finally, equality $(\ast)$ holds because for each $l$ there exists $N\in \N$ such that $j_N \ge m_l$ and then for all $i \ge N$ we have $\Phi^{(\infty)}_b (L_{(j_i)}) (\overline{x}^{(m_l)})=0$.

From $\Phi^{(\infty)}_b(L^\dagger_{(\infty)})(\overline{x})=0$ for all $\overline{x}\in \Omega$ we conclude $L^\dagger_{(\infty)} \in A^{(\infty)}$.
\end{proof}

This completes the proof of \cref{prop:ConvergenceInB}.
\end{proof}

Now we can prove \cref{thm:ConvergenceThm}:

\begin{proof}[Proof of \cref{thm:ConvergenceThm}]\label{proof:ProofConvergenceThm}

By \cref{thm:Extremizer} of \cref{app:CondExpandVar} (also see  \cref{rem:GPisMinProblem}) the conditional means computed in \eqref{eq:LPosterior} coincide with the unique minimiser{s} \mod{$L_{(j)}$ ($j \in \mathbb{N}$)} of the problems \eqref{eq:MinProblems}.
{Indeed, the assumption of \cref{thm:Extremizer} on $y=y_b^{(M)}$ is verified in \cref{prop:CondDistPropLCheck} of \cref{sec:ApplyPropCondDist}.}
\Cref{thm:ConvergenceThm} is, therefore, a direct consequence of \cref{prop:ConvergenceInB}.
\end{proof}

\subsection{Convergence of discrete Lagrangian models}\label{sec:ConvergenceAnalysisLd}

\mod{In analogy to \cref{sec:ConvergenceAnalysisL}, we can prove convergence of our discrete Lagrangian models to a true discrete Lagrangian model.}
\mod{Recall that the use of discrete Lagrangian models
to model motions governed by a continuous, non-degenerate Lagrangian model does not cause any discretisation error.
This is guaranteed by the existence of exact discrete Lagrangians for continuous non-degenerate Lagrangians \cite[§1.6]{MarsdenWestVariationalIntegrators}
once the discretisation parameter $\Delta t$ is below a threshold, as recalled in \cref{Intro:DiscreteLagrangianModel}.}

\subsubsection{Statement of convergence theorem (discrete, temporal evolution)}

\begin{theorem}\label{thm:ConvergenceThmLdTemporalNonDegenerate}
	Let $\Omega_a, \Omega_b \subset \R^d \times \R^d$ be open, bounded, non-empty domains. 
	Let $\Omega = \Omega_a \cup \Omega_b$.
	Consider a sequence of observations
	\[\hat \Omega_0 = \{\hat x^{(j)} = ({x_0}^{(j)},{x_1}^{(j)},{x_2}^{(j)}) \}_{j=1}^\infty\]
	of a discrete dynamical system with (not explicitly known) globally Lipschitz continuous discrete flow map $\mod{\gls{grefbar}}\colon \Omega_a \to \Omega_b$ related to a discrete Lagrangian $L_d^\rf \in \mathcal{C}^1(\overline{\Omega})$, i.e.\
	\begin{itemize}
		
	\item $\mod{\gls{grefbar}}(x_0^{(j)},x_1^{(j)}) =(x_1^{(j)},x_2^{(j)})$ for all $j \in \N$,
	
	\item $\mathrm{DEL}(L_d^\rf)(x_0,\mod{\overline {g_\rf}}(x_0,x_1))=0$ for all $(x_0,x_1) \in \Omega_a$,
	
	\item $\gls{nabla12}L_d^\rf (x_1,x_2) \in \R^{d \times d}$ is invertible for all $(x_1,x_2) \in \overline \Omega_b$.
	
	\end{itemize}
	 
	Assume that $\{({x_0}^{(j)},{x_1}^{(j)})\}_{j=1}^\infty$ is dense in $\Omega_a$.
	Let $K$ be a twice continuously differentiable kernel on $\Omega$, $\mod{\overline {x}_b} \in \Omega$, $\mod{c}_b \in \R$, $p_b \in \R$ and assume that $L^\rf_d$ is contained in the reproducing kernel Hilbert space $(U,\| \cdot \|_U)$ to $K$ and fulfils the normalisation condition
	\begin{equation}\label{eq:NormalisationThmLdTemporal}
		\gls{PhiN}(L^\rf_d) = (p_b,\mod{c}_b) \quad \text{with} \quad
		\gls{PhiN}(L_d)= \left(-\nabla_2 L_d(\mod{\overline {x}_b}),L_d(\mod{\overline {x}_b}) \right)
	\end{equation}
	and that $U$ embeds continuously into $\mathcal{C}^1(\overline{\Omega})$.
	Let $\xi \in  \mathcal{N}(0,\mathcal{K})$ be a centred Gaussian random variable over $U$.
	Then the sequence of conditional means $L_{d,(j)}$ of $\xi$ conditioned on the first $j$ observations and the normalisation conditions
	\begin{equation}
		\mathrm{DEL}(\xi)(\hat x^{(i)}) =0\, (\forall i\le j),\quad
		\Phi_N(\xi) = (p_b,\mod{c}_b)
	\end{equation}
	converges in $\| \cdot \|_U$ and in $\| \cdot \|_{\mathcal{C}^1(\overline{\Omega})}$ to a Lagrangian $L_{d,(\infty)} \in U$ that is 
	\begin{itemize}
		\item consistent with the normalisation $\Phi_N(L_{d,(\infty)}) = (p_b,\mod{c}_b)$
		\item consistent with the dynamics, i.e.\ $\mathrm{DEL}(L_{d,(\infty)})(\hat x) =0$ for all $\hat x = (x_0,x_1,x_2)$ with $(x_0,x_1)\in \Omega_a$,$(x_1,x_2)\in\Omega_b$ and $\mathrm{DEL}(L_d^\rf)(\hat x) =0$.
		\item Moreover, $L_d$ is the unique minimizer of $\| \cdot \|_U$ among all discrete Lagrangians in $U$ with the properties above.
	\end{itemize}
\end{theorem}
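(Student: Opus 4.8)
The plan is to follow verbatim the strategy of the proof of \cref{thm:ConvergenceThm}, recasting the identification task as a sequence of norm-minimisation problems over nested affine subspaces of $U$ and exploiting the Hilbert-space geometry of $U$, with $\mathrm{EL}$ replaced by $\mathrm{DEL}$, $\mathrm{Mm}$ by $\mathrm{Mm}^-$, and $\mathcal{C}^2$ by $\mathcal{C}^1$. First I would introduce the discrete analogue of the infinite-data functional $\Phi^{(\infty)}$ of \cref{lem:PhiInfBounded}. Because the observations satisfy $\overline{g_\rf}(x_0^{(j)},x_1^{(j)}) = (x_1^{(j)},x_2^{(j)})$, the pointwise constraint $\mathrm{DEL}(L_d)(\hat x^{(j)})=0$ coincides with sampling, at $(x_0^{(j)},x_1^{(j)})$, the functional
\[
\Phi^{(\infty)}(L_d)(x_0,x_1) = \mathrm{DEL}(L_d)(x_0,x_1,g_\rf(x_0,x_1)) = \nabla_2 L_d(x_0,x_1) + \nabla_1 L_d(\overline{g_\rf}(x_0,x_1)),
\]
which I claim is a bounded linear map $\mathcal{C}^1(\overline{\Omega}) \to \mathcal{C}^0(\overline{\Omega}_a,\R^d)$. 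Boundedness follows from the triangle inequality once both terms are recognised as first derivatives of $L_d$ evaluated at points of $\overline{\Omega}$; continuity of the second term in $(x_0,x_1)$ uses that $\overline{g_\rf}$ is globally Lipschitz, hence continuous, and maps $\overline{\Omega}_a$ into $\overline{\Omega}_b \subset \overline{\Omega}$, where $L_d$ is $\mathcal{C}^1$. Together with the bounded normalisation functional $\Phi_N(L_d) = (-\nabla_2 L_d(\overline{x}_b), L_d(\overline{x}_b))$, this yields bounded functionals $\Phi_b^{(j)}$ and $\Phi_b^{(\infty)}$ exactly as in \cref{sec:FormalSettingCont}.

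Second, I would define the closed, convex, non-empty affine subspaces $A^{(j)}, A^{(\infty)} \subset U$ of Lagrangians consistent with the first $j$ (respectively all) data constraints and the normalisation; non-emptiness is the assumed membership $L_d^\rf \in U$ (the discrete analogue of \cref{ass:LinB}), and closedness and convexity follow from boundedness of the functionals on $U \hookrightarrow \mathcal{C}^1(\overline{\Omega})$. The Hilbert projection theorem then furnishes unique minimisers $L_{d,(j)}$ and $L_{d,(\infty)}$ of $\|\cdot\|_U$. The convergence $L_{d,(j)} \to L_{d,(\infty)}$ is then proved exactly as in \cref{prop:ConvergenceInB}: the spaces $A^{(j)}$ are nested and contain $A^{(\infty)}$, so $\|L_{d,(j)}\|_U$ is monotone and bounded by $\|L_{d,(\infty)}\|_U$; reflexivity of $U$ gives a weakly convergent subsequence with limit $L_{d,(\infty)}^\dagger$; weak lower semicontinuity of the norm bounds $\|L_{d,(\infty)}^\dagger\|_U \le \|L_{d,(\infty)}\|_U$; the discrete analogue of \cref{lem:LdaggerinA} shows $L_{d,(\infty)}^\dagger \in A^{(\infty)}$, forcing equality and, by uniqueness, $L_{d,(\infty)}^\dagger = L_{d,(\infty)}$; a standard subsequence argument upgrades this to strong convergence of the full sequence, and continuity of $U \hookrightarrow \mathcal{C}^1(\overline{\Omega})$ transfers it to the $\mathcal{C}^1$-norm.

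The key lemma $L_{d,(\infty)}^\dagger \in A^{(\infty)}$ is where density enters: for fixed $(x_0,x_1) \in \Omega_a$, density of $\{(x_0^{(m)},x_1^{(m)})\}$ in $\Omega_a$ gives a subsequence converging to $(x_0,x_1)$; continuity of $\Phi^{(\infty)}(L_{d,(\infty)}^\dagger) \in \mathcal{C}^0(\overline{\Omega}_a,\R^d)$, weak convergence $L_{d,(j_i)} \rightharpoonup L_{d,(\infty)}^\dagger$ (each evaluation of $\Phi^{(\infty)}$ being a bounded functional on $U$), and the fact that each data constraint is eventually imposed exactly combine to give $\Phi^{(\infty)}(L_{d,(\infty)}^\dagger)(x_0,x_1)=0$, as in \eqref{eq:UseContinuity}--\eqref{eq:UseWeakConvergence}. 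Finally, the identification of each minimiser $L_{d,(j)}$ with the conditional mean \eqref{eq:LdPosterior} follows from \cref{thm:Extremizer}, whose hypotheses are verified in \cref{prop:CondDistPropLdCheck}. I expect the only genuinely new technical point relative to the continuous case to be the boundedness and continuity of $\Phi^{(\infty)}$ under composition with the flow map $\overline{g_\rf}$: one must ensure that $\overline{g_\rf}$ maps $\overline{\Omega}_a$ into $\overline{\Omega}_b$, so that $\nabla_1 L_d \circ \overline{g_\rf}$ is a well-defined continuous $\R^d$-valued function, which is precisely what the global Lipschitz continuity and the two-domain hypothesis $\Omega = \Omega_a \cup \Omega_b$ secure. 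The non-degeneracy of $\nabla_{1,2} L_d^\rf$ on $\overline{\Omega}_b$ is not needed for the convergence itself but guarantees that the limit $L_{d,(\infty)}$ induces a genuine discrete evolution rule, consistent with the theorem's interpretation.
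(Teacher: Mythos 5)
Your proposal is correct and follows essentially the same route as the paper: the paper likewise proves boundedness of the composed functional $\Phi^{(\infty)}(L_d)(x_0,x_1)=\mathrm{DEL}(L_d)(x_0,\overline{g_\rf}(x_0,x_1))$ via the Lipschitz extension of $\overline{g_\rf}$ to $\overline{\Omega}_a\to\overline{\Omega}_b$ (\cref{lem:BoundedPhiDiscrete}), then sets up the nested affine minimisation problems, invokes the argument of \cref{prop:ConvergenceInB} in complete analogy (\cref{prop:ConvergenceInBDiscrete}), and identifies the minimisers with the conditional means via \cref{thm:Extremizer} and \cref{prop:CondDistPropLdCheck}. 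Your closing observations—that composition with $\overline{g_\rf}$ is the only genuinely new technical point and that the non-degeneracy of $\nabla_{1,2}L_d^\rf$ is not used in the convergence argument itself—are both accurate.
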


\begin{remark}
The regularity assumption of $K$ (twice continuously differentiable) is needed for the
interpretation of $L_{d,(j)}$ as a conditional mean of a Gaussian process and for a convenient computation of $L_{d,(j)}$. However, the proof will show that a relaxation to continuous differentiability is possible.
\end{remark}

\subsubsection{Formal setting and proof (discrete, temporal evolution)}\label{sec:FormalSettingLd}

Let $\Omega_a, \Omega_b \subset \R^d \times \R^d$ be open, bounded, non-empty domains, let $\Omega = \Omega_a \cup \Omega_b$.
Let $\hat \Omega = \{ (x_0,x_1,x_2) \, | \, (x_0,x_1) \in \Omega_a, (x_1,x_2) \in \Omega_b\}$ and let
\[\gls{Omega0hat} =  \{ (x_0^{(j)},x_1^{(j)},x_2^{(j)}) \}_{j=1}^\infty \subset \hat{\Omega} \quad \text{with } (x_0^{(j)},x_1^{(j)})\in \Omega_a, (x_1^{(j)},x_2^{(j)})\in \Omega_b \; \text{for all } j \in \N.\]
Assume that $\{ (x_0^{(j)},x_1^{(j)}) \}_{j=1}^\infty$ is dense in $\Omega_a$.

\begin{remark}[Interpretation of $\hat \Omega_0$]
The set $\hat \Omega_0$ corresponds to a collection of observation data in the infinite data limit. It can be obtained as a collection of three consecutive snapshots of motions of the dynamical system that we observe and for which we seek to learn a discrete Lagrangian. In a typical scenario where $L_d^\rf \colon \R^d \times \R^d \to \R$ is the exact discrete Lagrangian to some underlying continuous Lagrangian, the motions leave the diagonal of $\R^d \times \R^d$ invariant. It is sensible to consider $\Omega_a$ and $\Omega_b$ that are neighbourhoods of compact sections of the diagonal in $\R^d \times \R^d$.
\end{remark}

We consider the discrete Lagrangian operator
\begin{equation}
	\begin{split}
		&\DEL \colon \mathcal{C}^1(\overline{\Omega}) \to \mathcal{C}^0(\overline{\hat{\Omega}},\R^d)\\
		&\DEL(L_d)(x_0,x_1,x_2) = \nabla_2 L_d(x_0,x_1) + \nabla_1 L_d(x_1,x_2).
	\end{split}
\end{equation}
Here $\nabla_j L_d$ denotes the partial derivatives with respect to the $j$th input argument of $L_d$.

Assume that the observations $\hat \Omega_0 = \{ (x_0^{(j)},x_1^{(j)},x_2^{(j)})\}_{j=1}^\infty$ correspond to a discrete Lagrangian dynamical system governed by $L_d^\rf \in \mathcal{C}^1(\overline\Omega)$ with globally Lipschitz continuous flow map $\mod{\overline {g_\rf}} \colon \Omega_a \to \Omega_b$, i.e.\ 
$\DEL(L_d^\rf)(x_0,\mod{\overline {g_\rf}}(x_0,x_1))=0$ for all $(x_0,x_1) \in \Omega_a$ and
$\mod{\overline {g_\rf}}(x_0^{(j)},x_1^{(j)})=(x_1^{(j)},x_2^{(j)})$ for all $j \in \N$.

\begin{lemma}\label{lem:BoundedPhiDiscrete}
	The linear functional $\Phi^{(\infty)} \colon \mathcal{C}^1(\overline{\Omega}) \to \mathcal{C}^0(\overline\Omega_a,\R^d)$ with
	\begin{equation}\label{eq:DefPhiInfLdTemporal}
		\Phi^{(\infty)}(L_d)(x_0,x_1) = \DEL(L_d)(x_0,\mod{\overline{g_\rf}}(x_0,x_1))
	\end{equation}
	is bounded.
\end{lemma}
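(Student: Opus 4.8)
The plan is to mirror the proof of \cref{lem:PhiInfBounded}, with one extra point of care: I must check that the composition with $\overline{g_\rf}$ produces an element of $\mathcal{C}^0(\overline\Omega_a,\R^d)$, not merely a continuous function on the open set $\Omega_a$. Writing $\overline{g_\rf}(x_0,x_1) = (x_1,g_\rf(x_0,x_1))$, the definition \eqref{eq:DefPhiInfLdTemporal} unfolds to
\[
\Phi^{(\infty)}(L_d)(x_0,x_1) = \nabla_2 L_d(x_0,x_1) + \nabla_1 L_d(x_1,g_\rf(x_0,x_1)),
\]
and linearity in $L_d$ is immediate.

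First I would verify well-definedness. By hypothesis $\overline{g_\rf}\colon \Omega_a \to \Omega_b$, so for $(x_0,x_1) \in \Omega_a$ both arguments $(x_0,x_1)$ and $(x_1,g_\rf(x_0,x_1))$ lie in $\Omega = \Omega_a \cup \Omega_b$, where $L_d \in \mathcal{C}^1(\overline\Omega)$ is continuously differentiable with gradients that extend continuously to $\overline\Omega$. Since $\overline{g_\rf}$ is globally Lipschitz on $\Omega_a$, it is uniformly continuous and hence extends continuously to $\overline\Omega_a$ with image contained in $\overline\Omega_b \subseteq \overline\Omega$. Composing this extension with the continuous maps $\nabla_1 L_d$ and $\nabla_2 L_d$ then shows that $\Phi^{(\infty)}(L_d)$ extends continuously to $\overline\Omega_a$, i.e.\ $\Phi^{(\infty)}(L_d) \in \mathcal{C}^0(\overline\Omega_a,\R^d)$.

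The norm estimate then follows from the triangle inequality exactly as in \cref{lem:PhiInfBounded}: pointwise,
\[
\|\Phi^{(\infty)}(L_d)(x_0,x_1)\| \le \|\nabla_2 L_d(x_0,x_1)\| + \|\nabla_1 L_d(x_1,g_\rf(x_0,x_1))\| \le 2\sqrt{d}\,\|L_d\|_{\mathcal{C}^1(\overline\Omega)},
\]
since each gradient block, evaluated at a point of $\overline\Omega$, is controlled by the $\mathcal{C}^1$-norm of $L_d$ up to the dimensional factor $\sqrt{d}$ relating the Euclidean norm of a $d$-component gradient to the coordinatewise partial derivatives appearing in the definition of $\|\cdot\|_{\mathcal{C}^1(\overline\Omega)}$. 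Taking the supremum over $(x_0,x_1) \in \Omega_a$ yields $\|\Phi^{(\infty)}(L_d)\|_{\mathcal{C}^0(\overline\Omega_a,\R^d)} \le 2\sqrt{d}\,\|L_d\|_{\mathcal{C}^1(\overline\Omega)}$, proving boundedness.

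I expect the only genuine subtlety to be the continuous-extension step. In the continuous-Lagrangian analogue, $g_\rf$ was assumed merely continuous on a product domain and the functional was built from evaluation at a single point; here, by contrast, continuity of $\Phi^{(\infty)}(L_d)$ up to the boundary of $\Omega_a$ relies on the global Lipschitz hypothesis on $\overline{g_\rf}$, which supplies both the uniform continuity needed to extend $\overline{g_\rf}$ continuously to $\overline\Omega_a$ and the fact that the image stays in $\overline\Omega_b$, where the relevant gradient of $L_d$ is defined and continuous. Once that is settled, the norm estimate is routine.
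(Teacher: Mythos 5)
Your proof is correct and follows essentially the same route as the paper: extend $\overline{g_\rf}$ by (Lipschitz, hence uniform) continuity to a map $\overline\Omega_a \to \overline\Omega_b$ to get well-definedness into $\mathcal{C}^0(\overline\Omega_a,\R^d)$, then bound the two gradient terms via the triangle inequality and the $\mathcal{C}^1(\overline\Omega)$-norm. You spell out the extension step in more detail than the paper (which simply asserts it) and track the dimensional constant $\sqrt d$ that the paper absorbs, but these are refinements, not a different argument.
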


\begin{proof}
Indeed, $\mod{\overline {g_\rf}}$ extends to a globally Lipschitz continuous map $\mod{\overline {g_\rf}} \colon \overline\Omega_a \to \overline\Omega_b$ such that $\Phi^{(\infty)} \colon \mathcal{C}^1(\overline{\Omega}) \to \mathcal{C}^0(\overline\Omega_a,\R^d)$ is a well-defined map between Banach spaces defined via \eqref{eq:DefPhiInfLdTemporal}.
Let $\| L_d\|_{\mathcal{C}^1(\overline\Omega)} \le 1$. In particular,
	\begin{equation}
		\sup_{(x_0,x_1) \in \Omega_a} \| \nabla_2 L_d(x_0,x_1)\| \le 1 
		\quad \text{and} \quad 
		\sup_{(x_1,x_2) \in \Omega_b} \| \nabla_2 L_d(x_1,x_2)\| \le 1.
	\end{equation}
	Therefore, by the triangle inequality
	\begin{equation}
		\begin{split}
			\sup_{(x_0,x_1) \in \Omega_a} \DEL(L_d)(x_0,\mod{\overline {g_\rf}}(x_0,x_1))
			&\le 1 +
			\sup_{(x_0,x_1) \in \Omega_a} \| \nabla_2 L_d(\mod{\overline {g_\rf}}(x_0,x_1))\|\\
			&\le 1 +
			\sup_{(x_1,x_2) \in \Omega_b} \| \nabla_2 L_d(x_1,x_2)\| \le 2.
		\end{split}
	\end{equation}
\end{proof}

We can now proceed in direct analogy to the continuous setting (\cref{sec:FormalSettingCont}) with $L$ replaced by $L_d$ and the functional $\Phi_N$ of \eqref{eq:PhiNContinuous} (normalisation conditions) replaced by the corresponding functional for discrete Lagrangians. The details are provided in the following.

Since for each $\overline{x}$ the evaluation functional $\mathrm{ev}_{\overline{x}} \colon f \mapsto f(\overline{x})$ on $\mathcal{C}^0(\overline\Omega_a,\R^d)$ is bounded, the following functions constitute bounded linear functionals for $j \in \N$:
\begin{align*}
	\Phi_j &\colon \mathcal{C}^1(\overline\Omega) \to \R^d, \quad \quad\Phi_j(L_d)=\Phi^{(\infty)} (L_d)(\overline{x}^{(j)})\\
	\Phi^{(j)} &\colon \mathcal{C}^1(\overline\Omega) \to (\R^d)^j, \quad \Phi^{(j)} =(\Phi_1,\ldots,\Phi_j).
\end{align*}

For a reference point $\overline{x}_b \in \Omega$ and for $p_b \in \R^d$, $\mod{c}_b \in \R$ we define the bounded linear functional
\begin{align}\label{eq:PhiNDiscrete}
	\gls{PhiN} &\colon \mathcal{C}^1(\overline\Omega) \to \R^{d+1}, \quad \quad \Phi_N(L)= \left( -\nabla_1 L_d(\overline{x_b}), L_d(\overline{x}_b) \right),
\end{align}
related to our normalisation condition for discrete Lagrangians. We will further use the shorthands $\Phi_b^{(k)} = (\Phi_1,\ldots,\Phi_k,\Phi_N)$ and $\Phi_b^{(\infty)} = (\Phi^{(\infty)} ,\Phi_N)$, and define
\begin{align*}
	y^{(k)}_{\mod{b}} &= (0,\ldots,0,p_b,\mod{c}_b) \in (\R^d)^k \times \R^d\times \R\\
	y^{(\infty)}_{\mod{b}} &= (0,p_b,\mod{c}_b) \in \mathcal{C}^0(\overline\Omega,\R^d) \times \R^d\times \R.
\end{align*}

In analogy to \cref{ass:LinB} we consider the following assumption.
\begin{assumption}\label{ass:LinBDiscrete}
	Assume that there is a Hilbert space $U$ with continuous embedding $U \hookrightarrow \mathcal{C}^1(\overline\Omega)$ such that
	\[\{L_d \in \mathcal{C}^1(\overline \Omega) \, | \, \Phi_b^{(\infty)}(L_d) = y^{(\infty)}_{\mod{b}} \} \cap U \not = \emptyset{.}\]
	In other words, $U$ is assumed to contain a Lagrangian consistent  with the normalisation and underlying dynamics.
\end{assumption}

The affine linear subspace{s}
\begin{align*}
	A^{(j)} &= \{ L_d\in U \, | \, \Phi_b^{(j)}(L_d) = y^{(j)}_{\mod{b}} \} \quad (j \in \N)\\
	A^{(\infty)} &= \{ L_d \in U \, | \, \Phi_b^{(\infty)}(L_d) = y^{(\infty)}_{\mod{b}} \}
\end{align*}
are closed in $U$, \mod{convex,} and not empty by \cref{ass:LinBDiscrete}.
\mod{By the Hilbert projection theorem \cite[§12.3]{Rudin1991}, the following minimisers exist and are uniquely defined:}
\begin{equation}\label{eq:MinProblemsDiscreteTemporal}
	\begin{split}
		{L_d}_{\mod{,}(j)} &\mod{:=} \arg \min_{{L_d} \in A^{(j)}} \| {L_d} \|_U \\
		{L_d}_{\mod{,}(\infty)} &\mod{:=} \arg \min_{{L_d} \in A^{(\infty)}} \| {L_d} \|_U{.}
	\end{split}
\end{equation}
Here $\| \cdot \|_U $ denotes the norm in $U$.

\begin{proposition}\label{prop:ConvergenceInBDiscrete}
	The minima ${L_d}_{\mod{,}(j)}$ converge to ${L_d}_{\mod{,}(\infty)}$ in the norm $\| \cdot \|_U$ and, thus, in $\| \cdot \|_{\mathcal C^1(\overline{\Omega})}$.
\end{proposition}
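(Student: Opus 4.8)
The plan is to reproduce the argument of \cref{prop:ConvergenceInB} essentially line for line, with $L_d$ in place of $L$, the discrete operator $\DEL$ in place of $\mathrm{EL}$, the space $\mathcal{C}^1(\overline{\Omega})$ in place of $\mathcal{C}^2(\overline{\Omega})$, and density in $\Omega_a$ in place of density in $\Omega$. The starting point is that the constraints only accumulate, so $A^{(1)} \supseteq A^{(2)} \supseteq \cdots$ and $A^{(\infty)} \subseteq \bigcap_{j\in\N} A^{(j)}$. Hence the norms $\| L_{d,(j)} \|_U$ are monotonically non-decreasing and bounded above by $\| L_{d,(\infty)} \|_U$, which is finite thanks to \cref{ass:LinBDiscrete}.

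First I would pass to a weak limit. As $U$ is a Hilbert space and therefore reflexive, the bounded sequence $(L_{d,(j)})_j$ has, by Banach--Alaoglu and Eberlein--\v{S}mulian, a subsequence $L_{d,(j_i)} \rightharpoonup L_{d,(\infty)}^\dagger$ for some $L_{d,(\infty)}^\dagger \in U$. Weak lower semicontinuity of the norm then gives $\| L_{d,(\infty)}^\dagger \|_U \le \liminf_{i} \| L_{d,(j_i)} \|_U \le \| L_{d,(\infty)} \|_U$.

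The one substantive step — the discrete analogue of \cref{lem:LdaggerinA} — is to show $L_{d,(\infty)}^\dagger \in A^{(\infty)}$, and this is where I expect the only real (if still routine) work to sit. The normalisation part is immediate: $\Phi_N$ is a single bounded functional, so weak convergence yields $\Phi_N(L_{d,(\infty)}^\dagger) = \lim_i \Phi_N(L_{d,(j_i)}) = (p_b,c_b)$. For the dynamical part I would fix an arbitrary $(x_0,x_1) \in \Omega_a$ and, using density of $\{(x_0^{(j)},x_1^{(j)})\}_j$ in $\Omega_a$, pick $(x_0^{(m_l)},x_1^{(m_l)}) \to (x_0,x_1)$, then establish the double-limit identity
\begin{align*}
\Phi^{(\infty)}(L_{d,(\infty)}^\dagger)(x_0,x_1)
&= \lim_{l\to\infty} \Phi^{(\infty)}(L_{d,(\infty)}^\dagger)(x_0^{(m_l)},x_1^{(m_l)})\\
&= \lim_{l\to\infty}\;\lim_{i\to\infty} \Phi^{(\infty)}(L_{d,(j_i)})(x_0^{(m_l)},x_1^{(m_l)}) = 0.
\end{align*}
The outer equality uses that $\Phi^{(\infty)}(L_{d,(\infty)}^\dagger) \in \mathcal{C}^0(\overline{\Omega}_a,\R^d)$ by \cref{lem:BoundedPhiDiscrete}, hence is continuous at $(x_0,x_1)$; the middle equality uses that evaluation of $\Phi^{(\infty)}(\cdot)$ at a fixed data point is a bounded linear functional on $U$ combined with the weak convergence $L_{d,(j_i)} \rightharpoonup L_{d,(\infty)}^\dagger$; and the inner limit vanishes because $\Phi^{(\infty)}(L_{d,(j_i)})(x_0^{(m_l)},x_1^{(m_l)}) = 0$ as soon as $j_i \ge m_l$. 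The main care required here is that $\overline{g_\rf}$ extends to a continuous map on $\overline{\Omega}_a$ so that the image of $\Phi^{(\infty)}$ really lands in $\mathcal{C}^0(\overline{\Omega}_a,\R^d)$ — but the global Lipschitz extension asserted in \cref{lem:BoundedPhiDiscrete} already delivers exactly this.

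Finally I would close as in the continuous case. Since $L_{d,(\infty)}^\dagger \in A^{(\infty)}$ and $L_{d,(\infty)}$ minimises over $A^{(\infty)}$, we have $\| L_{d,(\infty)} \|_U \le \| L_{d,(\infty)}^\dagger \|_U$; combined with the reverse bound this forces $\| L_{d,(\infty)}^\dagger \|_U = \| L_{d,(\infty)} \|_U$, and uniqueness of the minimiser gives $L_{d,(\infty)}^\dagger = L_{d,(\infty)}$. Weak convergence together with convergence of the norms then upgrades to strong convergence $L_{d,(j_i)} \to L_{d,(\infty)}$ in $U$. Because the weakly convergent subsequence was arbitrary, every subsequence of $(L_{d,(j)})_j$ admits a further subsequence converging to $L_{d,(\infty)}$, so the entire sequence converges to $L_{d,(\infty)}$ in $\| \cdot \|_U$; the continuous embedding $U \hookrightarrow \mathcal{C}^1(\overline{\Omega})$ finally transfers this to convergence in $\| \cdot \|_{\mathcal{C}^1(\overline{\Omega})}$.
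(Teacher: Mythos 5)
Your proof is correct and takes essentially the same approach as the paper: the published proof of \cref{prop:ConvergenceInBDiscrete} consists of the single remark that it is ``in complete analogy to \cref{prop:ConvergenceInB}'', and your argument spells out exactly that analogy — weak compactness via reflexivity, the discrete analogue of \cref{lem:LdaggerinA} using density in $\Omega_a$ and the continuity of $\Phi^{(\infty)}$ supplied by \cref{lem:BoundedPhiDiscrete}, then the upgrade from weak to strong convergence and the subsequence argument, finishing with the embedding $U \hookrightarrow \mathcal{C}^1(\overline{\Omega})$. Your separate, direct treatment of the normalisation functional $\Phi_N$ is a minor (and slightly cleaner) variation on the paper's handling of $\Phi_b^{(\infty)}$ and changes nothing of substance.
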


\begin{proof}
	The proof is in complete analogy to \cref{prop:ConvergenceInB}.
\end{proof}

\begin{proof}[Proof of \cref{thm:ConvergenceThmLdTemporalNonDegenerate}]\label{proof:ProofofConvThmLd}
{An application of \cref{thm:Extremizer} (\cref{app:CondExpandVar}) to the components of $\Phi_b^{(M)}$ considered as elements of the dual to the RKHS $U$ shows that} the unique minimisers $L_{d,(j)}$ in \eqref{eq:MinProblemsDiscreteTemporal} \mod{for $j \in \N$} are the conditional means {\eqref{eq:LdPosterior}} considered in \cref{thm:ConvergenceThmLdTemporalNonDegenerate}.
{Notice that the assumption of \cref{thm:Extremizer} on $y=y_b^{(M)}$ is fulfilled, see \cref{prop:CondDistPropLdCheck} (\cref{sec:ApplyPropCondDist}).}
Thus, \cref{thm:ConvergenceThmLdTemporalNonDegenerate} follows from \cref{prop:ConvergenceInBDiscrete}.
\end{proof}

\section{\mod{Lower bounds for} convergence rates of continuous and discrete Lagrangian models}\label{sec:ConvergenceRates}

Let $L_{(M)}$ denote the Lagrangian inferred from $M$ observations as in \cref{thm:ConvergenceThm} and let $L_{(\infty)}$ denote the limit as the observations densely fill a compact set.
We analyse how fast the learned equations of motions $\EL(L_{(M)})=0$ converge to the true equations of motions $\EL(L_{(\infty)})=0$ as the distance between observation data points converges to zero.
We will show that the extrapolation error $\|\EL(L_{(M)})(x,\dot x,\ddot x)\|$ for $(x,\dot x,\ddot x)$ an observation of the true dynamical system can be bounded. The bound tends to zero as $h^{r}$, where $h$ relates to the maximal distance between data points and $r$ is related to the smoothness of the true dynamics, \mod{provided that the kernel is sufficiently regular}. 
\mod{If} that the observation data fill the space at least as efficiently as uniform meshes, \mod{then} the bound tends to zero as $M^{-\frac{r}{2d}}$, where $M$ is the number of observation points. 

Away from degenerate points, the Euler--Lagrange equations implicitly define an acceleration field 
that expresses $\ddot x$ in terms of $(x,\dot x)$ such that $\EL(L_{(M)})(x,\dot x,\ddot x)=0$.
Roughly speaking, we will show that away from critical points, the convergence rate of the learned acceleration field to the true acceleration field is $h^{r}$ (or $M^{-\frac{r}{2d}}$ for uniform meshes) as well. Moreover, analogous statements will be shown for discrete Lagrangian models.

\subsection{Preliminaries: \mod{I}nterpolation and \mod{S}moothening theory}

Our proofs make use of statements from \mod{I}nterpolation and \mod{S}moothening theory \cite{Arcangeli2007,Narcowich2005,Wendland2005}. Let us recall notions and results that are relevant in our context.

\begin{definition}[Fill distance]\label{def:hFill}
To $\Omega \subset \R^{d'}$ and a finite subset $\Omega_0 \subset \overline{\Omega}$ we define the {\em fill distance} $h$ of $\Omega_0$ in $\Omega$ as
\begin{equation}\label{eq:hfillDef}
\gls{hFill} = \dist(\Omega_0,\overline{\Omega}) = \sup_{\mod{\overline{x} \in \overline{\Omega}}} \min_{\mod{\overline{x}_0 \in \Omega_0}} \| \overline{x}_0 - \overline{x} \|.
\end{equation}
\end{definition}

The fill distance of $\Omega_0$ in $\Omega$ coincides with the Hausdorff distance between the sets $\Omega_0$ and $\Omega$.

\begin{example}[Fill distance of uniform mesh and of Halton sequence]\label{rem:FillDistance}
	When $\overline{\Omega} \subset \R^{d'}$ is a ${d'}$-dimensional cube and $\Omega_0$ is a uniform mesh with mesh width $\Delta \overline{x}$ then $h_{\Omega_0} = \sqrt{{d'}} \Delta \overline{x}/2$. If $\Omega_0$ contains $M$ points,
	\[h_{\Omega_0} = \frac {\sqrt{{d'}}}{2(\sqrt[d']{M}-1)}.\]
	\Cref{fig:HausdorffDist} shows the fill distance $h_{\Omega_0}$ when $\Omega_0$ is an equidistant uniform mesh on a ${d'}$-dimensional cube $\overline \Omega$ and when $\Omega_0$ is a Halton sequence with the same number of elements. Here $2h_{\Omega_0}$ corresponds to the maximal distance between any two points in $\Omega_0 \cup \p \Omega$. It illustrates that in low dimensions Halton sequences reduce the fill distance roughly at a similar rate as uniform meshes.
\end{example}

\begin{figure}
	\centering
	\includegraphics[width=0.45\linewidth]{"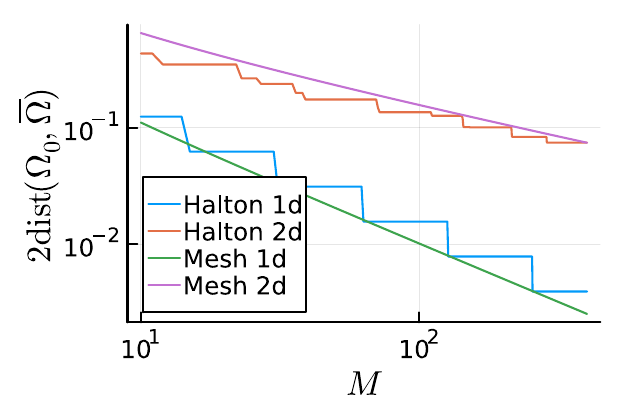"}
	\caption{Max.~distance between any two points in $\Omega_0 \cup \p \Omega$ for $\overline \Omega = [0,1]^d$, $d=1,2$, and $\Omega_0$ a uniform mesh or a Halton sequence with $M$ elements. (See \cref{rem:FillDistance}.)}\label{fig:HausdorffDist}
\end{figure}

In our analysis we will make use of the following theorem from interpolation and smoothening theory.

\begin{theorem}[Sobolev bounds]\label{thm:SobolevBoundsInterpolation}
Let $\Omega \subset \R^{d'}$ be a bounded domain with a Lipschitz continuous boundary. Let $r> \frac 12 {d'}$.
Then there exist constants $\delta_r,C_r>0$ (depending on $\Omega$ and $r$) such that for any finite $\Omega_0 \subset \overline{\Omega}$ with $h_{\Omega_0} \le \delta_r$, for any $u \in \gls{WSobolev}$ with $u|_{\Omega_0} \equiv 0$, and for any $l=0,\ldots,r$
\[
\| u\|_{W^l(\Omega)}
\le C_r (h_{\Omega_0})^{r-l} \| u\|_{W^r(\Omega)}.
\]
\end{theorem}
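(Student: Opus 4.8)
The plan is to recognise this as a \emph{sampling inequality} (a \emph{zeros lemma}) and to prove it by localisation, following \cite{Narcowich2005,Wendland2005,Arcangeli2007}. Since $r>\tfrac12 d'$, the Sobolev embedding $W^r(\Omega)\hookrightarrow\mathcal{C}^0(\overline{\Omega})$ makes the pointwise hypothesis $u|_{\Omega_0}\equiv 0$ meaningful, and this embedding is the only place where the restriction $r>\tfrac12 d'$ is used. The idea is to bound $u$ patchwise on sets of diameter comparable to $h_{\Omega_0}$, on each of which $u$ vanishes at enough points to be controlled, and then to assemble the local bounds into the global estimate.

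First I would establish a \emph{local} estimate on a fixed reference domain $D\subset\R^{d'}$ that is bounded, Lipschitz, and star-shaped with respect to a ball. Using an averaged Taylor polynomial $p\in\Pi_{r-1}$ of $u$ on $D$, the Bramble--Hilbert lemma gives
\[
\|u-p\|_{W^l(D)}\le C\,(\mathrm{diam}\,D)^{\,r-l}\,|u|_{W^r(D)},\qquad 0\le l\le r .
\]
If the sample set $X\subset\overline{D}$ has small enough fill distance it is \emph{norming} for $\Pi_{r-1}$, so the coefficients of a polynomial are controlled by its values on $X$; combined with a Markov inverse inequality on $D$ this bounds $\|p\|_{W^l(D)}$ in terms of $\sup_X|p|$ with the correct power of $\mathrm{diam}\,D$. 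Writing $p=(p-u)+u$, the values $p|_X=-(u-p)|_X+u|_X$ are controlled by $\|u-p\|_{\mathcal{C}^0(\overline{D})}$ (bounded through the display above together with $W^r\hookrightarrow\mathcal{C}^0$) plus $\sup_X|u|$. Assembling these yields a local bound whose only dependence on $u$ is through $|u|_{W^r(D)}$ and $\sup_X|u|$, the latter vanishing once $u|_X\equiv 0$.

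Second, I would globalise by a finite-overlap covering with scaling. Cover $\overline{\Omega}$ by balls $B_i$ of radius $\rho\sim h_{\Omega_0}$ whose intersections $D_i=B_i\cap\Omega$ are, for $h_{\Omega_0}\le\delta_r$ small enough, uniformly star-shaped with respect to balls --- this is where the Lipschitz (interior cone) property of $\partial\Omega$ enters --- and have uniformly bounded overlap. Because $h_{\Omega_0}$ is the fill distance, each $D_i$ contains points of $\Omega_0$ with fill distance on $D_i$ comparable to $h_{\Omega_0}$, hence (for $\delta_r$ chosen depending on $r$) norming for $\Pi_{r-1}$. Transferring the local estimate from $D_i$ to the reference domain by the dilation $y\mapsto\rho^{-1}y$ tracks the powers of $\mathrm{diam}\,D_i\sim h_{\Omega_0}$ exactly and gives the local bound on each $D_i$ with a constant uniform in $i$. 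Since the $W^l$- and $W^r$-norms are $L^2$-based, squaring, summing over $i$, and invoking the bounded overlap produces $\|u\|_{W^l(\Omega)}^2\le C\,h_{\Omega_0}^{\,2(r-l)}\,\|u\|_{W^r(\Omega)}^2$ once the sampled-value terms are dropped using $u|_{\Omega_0}\equiv 0$; taking square roots and bounding $|u|_{W^r}\le\|u\|_{W^r}$ yields the claim.

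The main obstacle is the uniform polynomial norming step and its interaction with scaling: one must guarantee that when $h_{\Omega_0}\le\delta_r$ the local point sets are stably unisolvent for $\Pi_{r-1}$ with a \emph{norming constant independent of the patch}, and that this constant scales correctly under the $\rho$-dilation so that no spurious powers of $h_{\Omega_0}$ survive. Fixing $\delta_r$ small enough (depending on $r$ and on the geometry of $\Omega$) to secure both the uniform star-shapedness of the boundary patches and the norming property is exactly the content of the threshold $\delta_r$ in the statement; the remaining ingredients are the standard Bramble--Hilbert and Markov inequalities on a fixed reference domain. This is precisely the argument of \cite{Narcowich2005,Wendland2005,Arcangeli2007}, to which I would appeal for the explicit constants.
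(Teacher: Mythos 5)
Your proposal is correct: it reconstructs the standard sampling-inequality (zeros-lemma) argument --- Bramble--Hilbert estimates with averaged Taylor polynomials, polynomial norming sets with a Markov inequality, dilation to a reference domain, and a finite-overlap covering of the Lipschitz domain at scale $h_{\Omega_0}$. This is essentially the same route as the paper, whose entire proof is the one-line citation to \cite[Cor.~4.1]{Arcangeli2007}; the argument you sketch (including the two technical points you flag, uniform star-shapedness of the boundary patches and the patch-uniform norming threshold fixing $\delta_r$) is precisely the proof behind that citation, to which you also correctly defer for the constants.
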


In the theorem's statement, $W^r(\Omega) = W^{r,2}(\Omega)$ denotes the Sobolev space (defined in \cref{rem:AssumptionTrueForK}). $u$ is continuous by the Sobolev embedding theorem \cite[§4]{SobolevSpacesAdams2003}. $u|_{\Omega_0}$ denotes the restriction of the function $u$ to the set $\Omega_0$.

\begin{proof}
This is a special case of \cite[Cor.~4.1]{Arcangeli2007}.
\end{proof}

\subsection{Convergence rates for continuous Lagrangian models}\label{sec:ConvRatesContLagrangian}

We will \mod{prove lower bounds for} the convergence rates of the inferred equations of motions and the acceleration field to true equations of motions and the true acceleration field as the fill-distance of observations converges to zero.

\begin{assumption}[Underlying system and RKHS]\label{ass:trueSystem}
Assume $\Omega \subset \R^{2d}$ is open, bounded and has locally Lipschitz boundary. Consider a kernel $K \colon \Omega \times \Omega \to \R$ such that the RKHS $U$ embeds continuously into the Sobolev space\footnote{See \cref{rem:AssumptionTrueForK} for a definition.} $W^{r+2}(\Omega)$ for $r>2+d$. Assume that the true acceleration $\ddot{x}$ can be described by a function $g_{\mathrm{ref}} \colon \Omega \to \R^d$ with $g_{\mathrm{ref}} \in (W^r(\Omega))^d$.
\end{assumption}

When \cref{ass:trueSystem} holds, then by the Sobolev embedding theorem \cite[§4]{SobolevSpacesAdams2003}, $W^{r+2}(\Omega)$ embeds continuously into $C^4(\overline{\Omega})$.
Therefore, the kernel $K$ necessarily fulfils sufficient smoothness properties such that for $p_b \in \R^d$, $c_b \in \R$, $\overline{x}_b \in \Omega$ we can define to any finite subset $\Omega_0 = \{(x,\dot x)\}_{j=1}^M \subset \Omega$ a Lagrangian $\gls{LOmega0} \in U$ by \eqref{eq:LPosterior}.

Consider data-driven equations of motions 
$\EL(L_{\Omega_0})(x,\dot x,\ddot x)=0$ inferred from finitely many observations $(x^{(j)},\dot x^{(j)},\ddot x^{(j)})$ with $\Omega_0 = \{(x^{(j)},\dot x^{(j)})\}_j$. 
The following \namecref{thm:ConvergenceRateL} provides a bound on the extrapolation error $\EL(L_{\Omega_0})(x,\dot x,\ddot x)$ on observations $(x,\dot x,\ddot x)$ of the true system.

\begin{theorem}[Convergence rates for equations of motion]\label{thm:ConvergenceRateL}
Let \cref{ass:trueSystem} hold. For $p_b \in \R^d$, $c_b \in \R$, $\overline{x}_b \in \Omega$ assume there exists a Lagrangian $L_{\mathrm{ref}} \in U$ consistent with the normalisation \eqref{eq:NormalisationThm} and the dynamics, i.e.\ $\EL(L_\mathrm{ref})(\overline{x},g_{\mathrm{ref}}(\overline{x})) =0$, for all $\overline x \in \overline{\Omega}$.
Denote by ${}_{k}\Phi^\infty(L)(\overline{x}) = {}_{k}\EL(L)(\overline{x},g_{\mathrm{ref}}(\overline{x}))$ for $L\in U$, $\overline x \in \Omega$
the $k$th component of $\EL(L)(\overline{x},g_{\mathrm{ref}}(\overline{x}))$ ($k=1,\ldots,d$).

Then there exist constants $\delta_r,C_r>0$ such that for all finite $\Omega_0 \subset \overline{\Omega}$ with
$h_{\Omega_0} = \dist(\Omega_0,\overline{\Omega}) < \delta_r$
and for all $l=0,1,\ldots,r$, $k=1,\ldots,d$
\[
\| {}_{k}\Phi^\infty(L_{\Omega_0})\|_{W^l(\Omega)}
\le C_r h^{r-l}_{\Omega_0} \|L_{\mathrm{ref}}\|_U.
\]
\end{theorem}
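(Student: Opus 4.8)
The plan is to combine the minimum-norm characterisation of $L_{\Omega_0}$ with the interpolation estimate of \cref{thm:SobolevBoundsInterpolation}, applied to the residual function $u := {}_{k}\Phi^\infty(L_{\Omega_0})$, which vanishes on $\Omega_0$ by construction. Three ingredients drive the argument: an a priori $U$-norm bound on $L_{\Omega_0}$ coming from feasibility of $L_{\mathrm{ref}}$; boundedness of $\Phi^\infty$ from $W^{r+2}(\Omega)$ into $W^r(\Omega)$; and the Sobolev bound for functions vanishing on $\Omega_0$.

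First I would establish $\|L_{\Omega_0}\|_U \le \|L_{\mathrm{ref}}\|_U$. By \cref{rem:GPisMinProblem} (equivalently \eqref{eq:LasConvexOpt} together with \cref{thm:Extremizer}), the Lagrangian $L_{\Omega_0}$ defined via \eqref{eq:LPosterior} is the element of minimal $U$-norm among all $L\in U$ satisfying the data and normalisation constraints. The reference Lagrangian $L_{\mathrm{ref}}$ is feasible for this constrained problem: it fulfils the normalisation \eqref{eq:NormalisationThm} by hypothesis, and since $\EL(L_{\mathrm{ref}})(\overline x,g_{\mathrm{ref}}(\overline x))=0$ for all $\overline x\in\overline\Omega$ while the observations obey $\ddot x^{(j)}=g_{\mathrm{ref}}(\overline x^{(j)})$, it also satisfies $\EL(L_{\mathrm{ref}})(\hat x^{(j)})=0$ at every data point. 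Minimality then yields the claimed bound.

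Next I would control the Sobolev norm of the residual. Writing, as in \cref{lem:PhiInfBounded}, $\Phi^\infty(L)=\frac{\p^2 L}{\p \dot x\,\p \dot x}\,g_{\mathrm{ref}}+\frac{\p^2 L}{\p x\,\p \dot x}\,\dot x-\frac{\p L}{\p x}$, each entry is assembled from first and second derivatives of $L_{\Omega_0}$. Since $U\hookrightarrow W^{r+2}(\Omega)$ continuously, these derivatives lie in $W^r(\Omega)$; and because $r>2+d>d=\tfrac12\dim\Omega$, the space $W^r(\Omega)$ is a Banach algebra and embeds into $C^0(\overline\Omega)$, so the products with $g_{\mathrm{ref}}\in (W^r(\Omega))^d$ and with the smooth coordinate factor $\dot x$ remain in $W^r(\Omega)$. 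Collecting these product estimates and the embedding constant produces a constant $C$, depending only on $\Omega$, $r$, $\|g_{\mathrm{ref}}\|_{W^r}$ and the embedding $U\hookrightarrow W^{r+2}$ (hence independent of $\Omega_0$), with $\|u\|_{W^r(\Omega)}\le \|\Phi^\infty(L_{\Omega_0})\|_{W^r(\Omega)}\le C\,\|L_{\Omega_0}\|_U\le C\,\|L_{\mathrm{ref}}\|_U$.

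Finally, because the data constraints force ${}_{k}\EL(L_{\Omega_0})(\hat x^{(j)})=0$ and $\hat x^{(j)}=(\overline x^{(j)},g_{\mathrm{ref}}(\overline x^{(j)}))$, the function $u={}_{k}\Phi^\infty(L_{\Omega_0})$ vanishes on $\Omega_0$. Applying \cref{thm:SobolevBoundsInterpolation} in ambient dimension $d'=2d$ (admissible since $r>\tfrac12 d'$) furnishes $\delta_r,C_r>0$ with $\|u\|_{W^l(\Omega)}\le C_r\,h_{\Omega_0}^{\,r-l}\,\|u\|_{W^r(\Omega)}$ whenever $h_{\Omega_0}\le\delta_r$; chaining this with the previous estimate and absorbing constants gives the assertion for every $l=0,\dots,r$ and $k=1,\dots,d$. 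I expect the main obstacle to be the boundedness of $\Phi^\infty\colon W^{r+2}(\Omega)\to W^r(\Omega)$: it relies on the multiplicative (Banach-algebra) structure of $W^r(\Omega)$ for $r>d$, which is precisely what lets the product of the $W^r$-regular coefficient $\frac{\p^2 L}{\p \dot x\,\p \dot x}$ with the merely $W^r$-regular field $g_{\mathrm{ref}}$ stay in $W^r(\Omega)$, and which motivates requiring $U\hookrightarrow W^{r+2}$ rather than only $W^r$.
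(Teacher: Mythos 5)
Your proposal is correct and follows essentially the same route as the paper's proof: the minimum-norm property of $L_{\Omega_0}$ with $L_{\mathrm{ref}}$ as a feasible competitor, boundedness of ${}_k\Phi^\infty \colon U \to W^r(\Omega)$, and \cref{thm:SobolevBoundsInterpolation} applied to the residual vanishing on $\Omega_0$. The only (cosmetic) difference is that you justify the product estimates via the Banach-algebra property of $W^r(\Omega)$ for $r>d$, whereas the paper invokes a Cauchy--Schwarz-type bound $\|fg\|_{W^r}\le \|f\|_{W^r}\|g\|_{W^r}$ directly; your formulation is if anything the more standard justification, and the constant it introduces is harmlessly absorbed into $C_r$.
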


\begin{proof}
All components ${}_{k}\Phi^\infty$ of the map $\Phi^\infty \colon U \to (W^r(\Omega))^d$ have bounded operator norm:
\mod{f}or any $L \in U$ and any $k=1,\ldots,d$

\[
{}_{k}\Phi^{(\infty)}(L)
= \sum_{i=1}^{d}\frac{\p^2 L}{\p \dot x^k \p \dot x^i} \cdot g_\rf^i
+  \frac{\p^2 L}{\p x \p \dot x^k} \cdot \dot x^i
-\frac{\p L}{ \p  x^k}.
\]
In the above formula, $\dot x^i$ needs to be interpreted as the projection map sending a point $(x,\dot x) \in \Omega$ to the component $\dot x^i$.
Using the triangle inequality and the Cauchy-Schwarz inequality on the Hilbert space $W^r(\Omega)$ we have
\begin{align*}
\|{}_{k}\Phi^{(\infty)}(L)\|_{W^r(\Omega)}
&\le \sum_{i=1}^{d} \left( \left\| \frac{\p^2 L}{\p \dot x^k \p \dot x^i} \cdot g_\rf^i \right\|_{W^r(\Omega)}
+ \left\| \frac{\p^2 L}{\p x^i \p \dot x^k} \cdot \dot x^i \right\|_{W^r(\Omega)}\right)
+\left\|\frac{\p L}{ \p  x^k} \right\|_{W^r(\Omega)}\\
&\le \sum_{i=1}^{d} \left( \left\| \frac{\p^2 L}{\p \dot x^k \p \dot x^i}  \right\|_{W^r(\Omega)} \left\| g_\rf^i \right\|_{W^r(\Omega)}
+ \left\| \frac{\p^2 L}{\p x^i \p \dot x^k}  \right\|_{W^r(\Omega)} \left\| \dot x^i \right\|_{W^r(\Omega)}\right)\\
&\quad +\left\|\frac{\p L}{ \p  x^k} \right\|_{W^r(\Omega)}\\
&\le \| L\|_{W^{r+2}(\Omega)} \left(1+\sum_{i=1}^{d}(\left\| g_\rf^i \right\|_{W^r(\Omega)}
+  \left\| \dot x^i \right\|_{W^r(\Omega)})\right)
\end{align*}

As the embedding $U \hookrightarrow W^{r+2}(\Omega)$ is continuous, there exists $c_r >0$ such that $\| L\|_{W^{r+2}(\Omega)}  \le c_r  \|L\|_U$.
Thus, ${}_{k}\Phi^\infty \colon U \to W^r(\Omega)$ has bounded operator norm
$\|{}_{k}\Phi^\infty\|_{U,W^r(\Omega)}$.

By \cref{thm:SobolevBoundsInterpolation} there exist $\delta_r>0$, $\tilde C_r >0$ such that for all finite $\Omega_0 \subset \overline\Omega$ (defining $L_{\Omega_0}$) with $h_{\Omega_0}<\delta_r$ and all $l=0,\ldots,r$
\[
\|{}_{k}\Phi^\infty(L_{\Omega_0})\|_{W^l(\Omega)}
\le \tilde C_r h^{r-l} \|{}_{k}\Phi^\infty(L_{\Omega_0})\|_{W^r(\Omega)}.
\]
As by \cref{rem:GPisMinProblem}, $L_{\Omega_0} \in U$ minimizes the RKHS-norm while fulfilling the normalisation condition \eqref{eq:NormalisationThm} and $\Phi^\infty(L_{\Omega_0})(\overline{x}) =0$ for all $\overline{x}\in \Omega_0$. As $L_{\mathrm{ref}} \in U$ fulfils \eqref{eq:NormalisationThm} and the stricter condition $\Phi^\infty(L_{\mathrm{ref}})(\overline{x}) =0$ for all $\overline{x}\in \Omega$, we have $\|L_{\Omega_0}\|_U \le \|L_{\mathrm{ref}}\|_U$. Therefore, combining all estimates, we arrive at
\begin{align*}
\|{}_{k}\Phi^\infty(L_{\Omega_0})\|_{W^l(\Omega)}
&\le \tilde C_r h^{r-l} \|{}_{k}\Phi^\infty(L_{\Omega_0})\|_{W^r(\Omega)}\\
&\le \tilde C_r h^{r-l} \|{}_{k}\Phi^\infty\|_{U,W^r(\Omega)}
\|L_{\Omega_0}\|_U\\
&\le \tilde C_r h^{r-l} \|{}_{k}\Phi^\infty\|_{U,W^r(\Omega)}
 \|L_{\mathrm{ref}}\|_U.
\end{align*}
This proves the claim.
\end{proof}

As by \cref{rem:FillDistance}, when observations are obtained over a sequence of uniform meshes in $\Omega \subset \R^{2d}$ then the \mod{lower bound for the} convergence rate \mod{guaranteed by} \cref{thm:ConvergenceRateL} is $M^{-\frac r {2d}}$, where $M$ is the number of observations.

When the dynamics and the kernel are smooth, then \cref{thm:ConvergenceRateL} can be applied for any $r$. However, we expect that the constants $\delta_r$, $C_r$ grow with $r$. Thus, higher and higher convergence rates become dominant as the fill distance $h_{\Omega_0}$ decreases.
This is discussed in the following \nameCref{cor:ConvRateEQMSmooth}.

\begin{corollary}[Convergence rates equations of motions, Gaussian kernel]\label{cor:ConvRateEQMSmooth}
Let $\Omega \subset \R^d$ open, bounded with locally Lipschitz boundary and $K \colon\Omega \times \Omega \to \R$ the squared exponential kernel. Assume the observed acceleration field $g_{\mathrm{ref}}\colon \Omega \to \R^d$ is smooth and all derivatives are bounded on $\overline{\Omega}$.
For $p_b \in \R^d$, $c_b \in \R$, $\overline{x}_b \in \Omega$ assume there exists a Lagrangian $L_{\mathrm{ref}} \in U$ consistent with the normalisation \eqref{eq:NormalisationThm} and the dynamics.
Then for all $r \in \N$ there exist $C_r,\delta_r>0$ such that for all finite subsets $\Omega_0 \subset \Omega$ (defining $L_{\Omega_0}$) with $h_{\Omega_0}<\delta_r$ and for all $l =0,\ldots,r$
\[
\|\overline{x} \mapsto {}_{k}\EL(L_{\Omega_0})(\overline{x},g_{\mathrm{ref}}(\overline{x})) \|_{W^l(\Omega)} \le C_r h_{\Omega_0}^{r-l}
\|L_{\mathrm{ref}}\|_U
\]
for any component $k=1,\ldots,d$.
\end{corollary}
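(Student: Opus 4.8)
The plan is to reduce the corollary to a single application of \cref{thm:ConvergenceRateL}, handled one value of $r$ at a time, by exploiting the fact that the squared exponential kernel makes the hypotheses of \cref{ass:trueSystem} available for \emph{every} smoothness exponent. First I would fix $r \in \N$ and introduce the auxiliary integer $\tilde r := \max(r, 3+d)$, so that $\tilde r > 2+d$ and $\tilde r \ge r$. The idea is to invoke \cref{thm:ConvergenceRateL} with $\tilde r$ in place of $r$ and afterwards discard the surplus powers of $h_{\Omega_0}$.

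The key step is to verify that \cref{ass:trueSystem} holds with the exponent $\tilde r$. For the squared exponential kernel, \cref{rem:AssumptionTrueForK} guarantees that the RKHS $U$ embeds continuously into every Sobolev space $W^m(\Omega)$; in particular it embeds into $W^{\tilde r + 2}(\Omega)$. Moreover, since $g_{\mathrm{ref}}$ is smooth with all derivatives bounded on the compact set $\overline{\Omega}$ and $\Omega$ is bounded, each component of $g_{\mathrm{ref}}$ lies in $W^{\tilde r}(\Omega)$, so that $g_{\mathrm{ref}} \in (W^{\tilde r}(\Omega))^d$. Hence all hypotheses of \cref{thm:ConvergenceRateL} are met, and the theorem yields constants $C_{\tilde r}, \delta_{\tilde r} > 0$ such that for every finite $\Omega_0 \subset \overline{\Omega}$ with $h_{\Omega_0} < \delta_{\tilde r}$, every $l = 0, \ldots, \tilde r$, and every component $k$,
\[
\|{}_{k}\Phi^\infty(L_{\Omega_0})\|_{W^l(\Omega)} \le C_{\tilde r}\, h_{\Omega_0}^{\tilde r - l}\,\|L_{\mathrm{ref}}\|_U,
\]
where I recall ${}_{k}\Phi^\infty(L)(\overline{x}) = {}_{k}\EL(L)(\overline{x}, g_{\mathrm{ref}}(\overline{x}))$ is precisely the quantity appearing in the corollary.

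Finally I would pass from the exponent $\tilde r$ back to $r$. Setting $\delta_r := \min(\delta_{\tilde r}, 1)$ and $C_r := C_{\tilde r}$, for every $h_{\Omega_0} < \delta_r \le 1$ and every $l = 0, \ldots, r \le \tilde r$ one has $h_{\Omega_0}^{\tilde r - l} = h_{\Omega_0}^{r-l}\, h_{\Omega_0}^{\tilde r - r} \le h_{\Omega_0}^{r-l}$, since $h_{\Omega_0} \le 1$ and $\tilde r - r \ge 0$; this downgrades the stronger estimate to the claimed bound, and the range $l = 0, \ldots, r$ is contained in $l = 0, \ldots, \tilde r$. The only genuine obstacle is the regime $r \le 2+d$, where \cref{thm:ConvergenceRateL} cannot be applied directly because its hypothesis requires $r > 2+d$; the device of enlarging the exponent to $\tilde r$ and restricting to $h_{\Omega_0} \le 1$ is exactly what resolves this edge case, while for $r > 2+d$ the statement is immediate from the theorem.
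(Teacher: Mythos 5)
Your proposal is correct and takes essentially the same route as the paper's proof: it verifies \cref{ass:trueSystem} for arbitrary smoothness exponents via the squared exponential kernel's embedding into every Sobolev space (\cref{rem:AssumptionTrueForK}), applies \cref{thm:ConvergenceRateL} directly when $r>2+d$, and handles $r\le 2+d$ by deducing the bound from the case $3+d$ — your explicit device $\tilde r=\max(r,3+d)$ with $\delta_r\le 1$ and the estimate $h_{\Omega_0}^{\tilde r-l}\le h_{\Omega_0}^{r-l}$ is precisely what the paper's phrase ``sufficiently small $\delta_r$ and sufficiently large $C_r$'' leaves implicit. No gaps.
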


\begin{proof}
As $K$ is the squared exponential kernel, its reproducing kernel Hilbert space $U$ embeds continuously into any Sobolev space $W^m(\Omega)$ ($m>1$) \cite[Thm.4.48]{ChristmannSteinwart2008RKHS}. Thus, \cref{ass:trueSystem} is fulfilled for any $r>2+d$. Therefore, for $r>2+d$ \mod{the} statement follows by \cref{thm:ConvergenceRateL}.
For $r \le 2+d$ the statement can be deduced from the statement with $r = 3+d$ for a sufficiently small $0<\delta_r<\delta_{3+d}$ and sufficiently large $C_r>C_{3+d}$.
\end{proof}

For a Lagrangian $L \in \mathcal{C}^2(\Omega)$ at non-degenerate points, i.e.\ where the matrix $\frac{\p^2 L}{\p \dot x \p \dot x}$ is invertible, we can define the acceleration field
\[
g(L)(\overline{x}) = \left(\frac{\p^2 L}{\p \dot x \p \dot x}(\overline{x}) \right)^{-1} \left(\frac{\p L}{ \p  x} (x,\dot{x}) - \frac{\p^2 L}{\p x \p \dot x} (x,\dot{x}) \cdot \dot x \right).
\]
It fulfils $\EL(L)(\overline{x},g(L)(\overline{x}))=0$.
We have the following \mod{lower bound for the rate of the} pointwise convergence of the acceleration field.

\begin{corollary}[Convergence rates of acceleration field]\label{cor:RateAcc}
Under the assumptions of \cref{thm:ConvergenceRateL}, consider a sequence $(\overline{x}^{(j)})_{j=1}^\infty \subset \Omega$ defining a dense subset of $\Omega$. Consider the Lagrangians $L_{(j)}$, $L_{(\infty)}$ characterised in \cref{thm:ConvergenceThm}. Assume $L_{(\infty)}$ is non-degenerate at $\overline{x} \in \Omega$.
Let $\Omega_0^{k} := \{\overline{x}^{(j)}\}_{j=1}^k$.
Then there exist $J\in \N$, $C_r>0$ such that for all $k>J$
\[
\| g(L_{(k)})(\overline{x}) - g_{\mathrm{ref}}(\overline{x}) \| 
\le C_r (h_{\Omega_0^k})^r .
\]
\end{corollary}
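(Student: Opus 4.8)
The plan is to reduce the pointwise acceleration error to a pointwise estimate on the residual functional $\Phi^{(\infty)}$ of \cref{thm:ConvergenceRateL}, exploiting that the Euler--Lagrange operator is affine in the acceleration $\ddot x$. Fix $\overline{x}\in\Omega$ and abbreviate $H_k=\frac{\p^2 L_{(k)}}{\p \dot x \p \dot x}(\overline{x})$ and $H_\infty=\frac{\p^2 L_{(\infty)}}{\p \dot x \p \dot x}(\overline{x})$. Since $\EL(L)(\overline{x},\ddot x)=\frac{\p^2 L}{\p \dot x \p \dot x}(\overline{x})\ddot x+\frac{\p^2 L}{\p \dot x \p x}(\overline{x})\dot x-\frac{\p L}{\p x}(\overline{x})$ is affine in $\ddot x$, I would evaluate $\EL(L_{(k)})(\overline{x},\cdot)$ at $g(L_{(k)})(\overline{x})$ (where it vanishes by definition of $g$) and at $g_{\mathrm{ref}}(\overline{x})$ and subtract, obtaining the exact identity
\begin{equation}\label{eq:AccErrorIdentity}
g(L_{(k)})(\overline{x}) - g_{\mathrm{ref}}(\overline{x})
= -H_k^{-1}\,\EL(L_{(k)})(\overline{x},g_{\mathrm{ref}}(\overline{x}))
= -H_k^{-1}\,\Phi^{(\infty)}(L_{(k)})(\overline{x}).
\end{equation}
Here I have used that $\EL(L_{(\infty)})(\overline{x},g_{\mathrm{ref}}(\overline{x}))=0$ by the dynamics-consistency of $L_{(\infty)}$ in \cref{thm:ConvergenceThm}, so that $g(L_{(\infty)})(\overline{x})=g_{\mathrm{ref}}(\overline{x})$, together with the definition $\Phi^{(\infty)}(L)(\overline{x})=\EL(L)(\overline{x},g_{\mathrm{ref}}(\overline{x}))$ from \cref{thm:ConvergenceRateL}.

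Next I would control the two factors in \eqref{eq:AccErrorIdentity} separately. For the inverse Hessian, \cref{thm:ConvergenceThm} gives $L_{(k)}\to L_{(\infty)}$ in $\mathcal{C}^2(\overline{\Omega})$, hence $H_k\to H_\infty$ in operator norm; since $L_{(\infty)}$ is non-degenerate at $\overline{x}$, the matrix $H_\infty$ is invertible, and by continuity of matrix inversion there is $J\in\N$ such that for all $k>J$ the matrix $H_k$ is invertible with $\|H_k^{-1}\|\le 2\|H_\infty^{-1}\|=:C_H$. This is precisely what fixes the threshold $J$. For the residual factor I would invoke \cref{thm:ConvergenceRateL}: each $L_{(k)}$ equals $L_{\Omega_0^k}$, the RKHS-norm minimiser under the normalisation and interpolation constraints, so the theorem applies. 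Choosing an integer $l$ with $d<l\le r$ (possible since $r>2+d$ and $\Omega\subset\R^{2d}$), the continuous Sobolev embedding $W^l(\Omega)\hookrightarrow\mathcal{C}^0(\overline{\Omega})$, valid for $l>\tfrac12\cdot 2d=d$, yields for each component $i$
\[
\bigl|{}_{i}\Phi^{(\infty)}(L_{(k)})(\overline{x})\bigr|
\le c_{\mathrm{emb}}\,\bigl\|{}_{i}\Phi^{(\infty)}(L_{(k)})\bigr\|_{W^l(\Omega)}
\le c_{\mathrm{emb}}\,C_r\,h_{\Omega_0^k}^{\,r-l}\,\|L_{\mathrm{ref}}\|_U.
\]
Summing over the $d$ components and feeding this together with $\|H_k^{-1}\|\le C_H$ into \eqref{eq:AccErrorIdentity} gives $\|g(L_{(k)})(\overline{x})-g_{\mathrm{ref}}(\overline{x})\|\le C_r' \,h_{\Omega_0^k}^{\,r-l}$ for $k>J$, which is the claimed rate.

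The main obstacle is this second factor: transferring the global Sobolev-norm bound of \cref{thm:ConvergenceRateL} into a genuinely pointwise statement at the fixed point $\overline{x}$ while retaining a high power of $h_{\Omega_0^k}$. This forces the loss of the $l>d$ derivatives needed for $W^l\hookrightarrow\mathcal{C}^0$, and this is the only place where the dimension enters the exponent; the algebraic identity \eqref{eq:AccErrorIdentity} and the stability of $H_k^{-1}$ are comparatively routine. The exponent obtained is therefore $r-l$ with $l$ any admissible embedding order; in the smooth-kernel regime of \cref{cor:ConvRateEQMSmooth} the Sobolev smoothness, and hence $r$, may be taken arbitrarily large, so every polynomial rate $h_{\Omega_0^k}^{\,s}$ is attained, which is how the stated bound is read.
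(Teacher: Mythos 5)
Your decomposition is the same as the paper's. The affine-in-$\ddot x$ identity is exactly the computation in the paper's proof, which writes it without inverting the Hessian, as $\EL(L_{(k)})(\overline{x},g_\rf(\overline{x}))=\frac{\p^2 L_{(k)}}{\p\dot x\p\dot x}(\overline{x})\,\bigl(g_\rf(\overline{x})-g(L_{(k)})(\overline{x})\bigr)$, and your stability bound $\|H_k^{-1}\|\le 2\|H_\infty^{-1}\|$ for $k>J$ is the paper's eigenvalue argument in disguise: $H_k$ is symmetric, so $\|H_k^{-1}\|=1/\min_i|\lambda_i(H_k)|\le 2/|\lambda|$ once the eigenvalue of $H_k$ closest to zero is within $|\lambda|/2$ of the corresponding eigenvalue $\lambda$ of $H_\infty$, which follows from the $\mathcal{C}^2(\overline{\Omega})$ convergence of \cref{thm:ConvergenceThm}. (Your parenthetical $g(L_{(\infty)})(\overline{x})=g_\rf(\overline{x})$ is true but not needed for the identity.) So the skeleton --- identity, Hessian stability fixing $J$, residual bound from \cref{thm:ConvergenceRateL} --- matches the paper step for step.

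The genuine issue is the residual step, and there your argument does not deliver the stated exponent. Converting the $W^l$-bounds of \cref{thm:ConvergenceRateL} into a pointwise bound at the fixed $\overline{x}$ via $W^l(\Omega)\hookrightarrow\mathcal{C}^0(\overline{\Omega})$ (which on $\Omega\subset\R^{2d}$ requires $l>d$) costs at least $d+1$ powers of $h$: you obtain $h_{\Omega_0^k}^{\,r-l}$ with $l\ge d+1$, whereas \cref{cor:RateAcc} asserts $h_{\Omega_0^k}^{\,r}$ for the \emph{fixed} $r$ of \cref{ass:trueSystem}. Your closing reinterpretation through \cref{cor:ConvRateEQMSmooth} --- taking $r$ arbitrarily large --- proves ``every polynomial rate for smooth kernels and dynamics'' but not the corollary as stated: under \cref{ass:trueSystem} alone, $g_\rf\in(W^r(\Omega))^d$ for one given $r$, and you have no licence to raise it. You should know, however, that this is exactly the thinnest point of the paper's own proof, which asserts $\|\EL(L_{(k)})(\overline{x},g_\rf(\overline{x}))\|\le C\,h^{r}_{\Omega_0^k}$ ``by \cref{thm:ConvergenceRateL} and $U\subset C(\overline{\Omega})$'' without accounting for any embedding loss; the sharpest standard tool here (the $L^\infty$ version of the sampling inequality underlying \cref{thm:SobolevBoundsInterpolation}) gives exponent $r-d$ on a $2d$-dimensional domain, which would also slightly improve your $r-l$. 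In short: your proof is a correct derivation of the bound with exponent $r-l$ (improvable to $r-d$), and it makes explicit a loss that the paper's proof passes over silently; but as a proof of the displayed inequality with exponent $r$ it is incomplete, at precisely the same step.
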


Again, as by \cref{rem:FillDistance}, when the observations are obtained over uniform meshes in $\Omega \subset \R^{2d}$, then \cref{cor:RateAcc} \mod{guarantees that the pointwise acceleration error goes to zero at least as fast as} $M^{-\frac r {2d}}$, where $M$ is the number of samples.

\begin{proof}
The Lagrangian $L_{(\infty)}$ is non-degenerate at $\overline{x}$, i.e.\ all eigenvalues of the symmetric matrix $\frac{\p^2 L_{(\infty)}}{\p \dot x \p \dot x}(\overline{x})$
are non-zero. Let $\lambda$ be the eigenvalue closest to 0.
Since the Lagrangians $L_{(j)}$ converge to $L_{(\infty)}$ in $\| \cdot \|_{\mathcal{C}^2(\overline{\Omega})}$, there exists $J_1$ such that for all $k>J_1$
the eigenvalue $\lambda_j$ of $\frac{\p^2 L_{(j)}}{\p \dot x \p \dot x}(\overline{x})$ closest to zero fulfils $| \lambda_j -\lambda| < \frac  {|\lambda| }2$.
As
\[\Omega_0^1 \subset \Omega_0^2 \subset \ldots \subset \bigcup_{j=1}^\infty \Omega_0^j \subset  \overline{\Omega}\]
and $\bigcup_{j=1}^\infty \Omega_0^j $ is dense in the compact set $\overline{\Omega}$, we have $h_{\Omega_0^j} \to 0$. By \cref{thm:ConvergenceRateL} and $U \subset C(\overline{\Omega})$, there exists $J_2 \in \N$, $C>0$ such that for all $k>J_2$
\[
\|\EL(L_{(k)})(\overline{x},g_\rf(\overline{x}))\| \le C h_{\Omega_0^k}^r,
\]
where the norm $\|L_{(\infty)}\|_U$ has been absorbed in the constant $C$.
For $k>J:=\max(J_1,J_2)$ we have
\begin{align*}
	C (h_{\Omega_0^k})^r 
	&\ge \|\EL(L_{(k)})(\overline{x},g_\rf(\overline{x}))  - \underbrace{\EL(L_{(k)})(\overline{x},g(L_{(k)})(\overline{x}))}_{=0}\|\\
	&=\left\| \frac{\p^2 L_{(k)}}{\p \dot x \p \dot x}(\overline{x}) (g_\rf (\overline{x}) - g(L_{(k)})(\overline{x})) \right\|\\
	&\ge \frac {|\lambda|} 2 \left\| g_\rf (\overline{x}) - g(L_{(k)})(\overline{x}) \right\|
\end{align*}%
This proves the claim.
\end{proof}

\subsection{\mod{Numerical convergence test for smooth dynamical systems}}

\Cref{fig:Convergence} shows a convergence plot for the relative error in predicted acceleration $\mathrm{err}_{\mod{g}}$, i.e.\ of
\[
\mathrm{err}_{\mod{g}}(\overline{x}) = \frac{\|\mod{g}(L_{\mod{(M)}})\mod{(\overline{x})}-\mod{g}(L_\rf)\mod{(\overline{x})}\|_{\R^d}}{\|\mod{g}(L_\rf)\mod{(\overline{x})}\|_{\R^d}},
\]
\mod{where $M$ denotes the number of observations.}
The data for the plot in \cref{fig:Convergence} was computed for the 1d harmonic oscillator $L_\rf(x) = \frac 12 \dot x^2 - \frac 12 x^2$ with $(x,\dot x) \in [-1,1]^2$ in quadruple precision. For each $M \in \{2^1,2^2,\ldots,2^6\}$ the error $\mathrm{err}_{\mod{g}}(\overline{x})$ was evaluated on a uniform mesh with $10\times 11$ mesh points in $[-1,1]\times [-1,1] \mod{\subset} T\R$. The plot shows \mod{the maximum value of $\mathrm{err}_g$ on the evaluation mesh.}
We can see convergence with errors levelling out due to round-off errors at approximately $10^{-11}$.
Moreover, as $M$ increases, higher and higher convergence rates become dominant before round-off errors dominate.

\begin{figure}
	\centering
	\includegraphics[width=0.49\linewidth]{"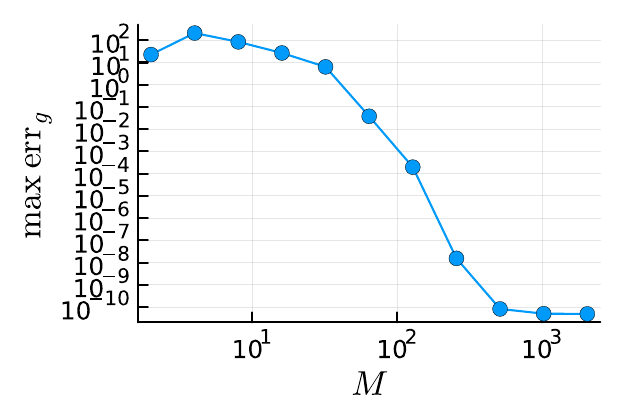"}
	\caption{Convergence of $\mod{g}(L_{\mod{(M)}})$ to true acceleration data.}\label{fig:Convergence}
\end{figure}

\mod{The result is consistent with \cref{cor:RateAcc}}: as the reference Lagrangian and the kernel in the experiment are smooth, \cref{cor:RateAcc} applies for any $r \in \N$.
This confirms our observation that as the number of observation points $M$ increases (and $h_{\Omega_0}$ shrinks as visualised in \cref{fig:HausdorffDist}), higher and higher convergence rates become dominant until round-off errors become dominant.

\subsection{Convergence rates of discrete Lagrangian models}

We now turn to discrete Lagrangian models. For preparation, we prove the following Cauchy-Schwarz-type inequality.

\begin{lemma}\label{lem:CSSobolev}
Let $\Omega \subset \R^{2d}$ an open, non-empty, bounded domain with Lipschitz boundary. Let $r>d$ and $\mod{\overline g} \colon \Omega \to \Omega \subset \R^{2d}$ with $\mod{\overline g} \in (W^r(\Omega))^{2d}$. Then there exists $C_g >0$ such that for all $f \in W^r(\Omega)$ 
\[
\| f \circ \mod{\overline g} \|_{W^{r-d-1}(\Omega)} \le C_{\mod{\overline g}} \|f\|_{W^r(\Omega)}.
\]
\end{lemma}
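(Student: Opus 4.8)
The plan is to reduce the statement to a pointwise (i.e.\ $C^m$) estimate for the classical derivatives of the composition, and to pass back and forth between Sobolev and continuous norms using the Sobolev embedding theorem. Write $n = 2d$ for the ambient dimension and set $m := r - d - 1$; since $r \in \N$ and $r > d$ we have $m \ge 0$. The crucial observation is that the gain $r - m = d+1 > d = n/2$ is exactly what the Sobolev embedding theorem \cite[§4]{SobolevSpacesAdams2003} requires on the bounded Lipschitz domain $\Omega \subset \R^{n}$: it yields continuous embeddings $W^r(\Omega) \hookrightarrow C^m(\overline{\Omega})$. Applying this to $f$ and to each component $\overline{g}_i$, both $f$ and $\overline{g}$ have genuine continuous classical derivatives up to order $m$, so that $\partial^\beta f$ may be evaluated pointwise at $\overline{g}(x) \in \Omega \subset \overline{\Omega}$ and the composition $f \circ \overline{g}$ is a bona fide $C^m(\overline{\Omega})$ function whose weak derivatives coincide with its classical ones.

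First I would record the chain rule in Fa\`a di Bruno form: for any multi-index $\alpha$ with $|\alpha| \le m$,
\[
\partial^\alpha (f \circ \overline{g})(x) = \sum_{|\beta| \le |\alpha|} (\partial^\beta f)(\overline{g}(x)) \, Q_{\alpha,\beta}(x),
\]
where each $Q_{\alpha,\beta}$ is a fixed polynomial (with coefficients depending only on $d$ and $m$) in the partial derivatives $\partial^\gamma \overline{g}_i$ with $1 \le |\gamma| \le |\alpha| \le m$. Taking suprema over $\overline{\Omega}$ and using that $|\overline{g}|$-derivatives appear with total degree at most $m$, this yields a pointwise bound of the form
\[
\| f \circ \overline{g} \|_{C^m(\overline{\Omega})} \le P\!\left( \| \overline{g} \|_{C^m(\overline{\Omega})} \right) \, \| f \|_{C^m(\overline{\Omega})},
\]
where $P$ is a polynomial depending only on $d$ and $m$. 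Since $\overline{g} \in (W^r(\Omega))^n$ is fixed and embeds into $(C^m(\overline{\Omega}))^n$, the prefactor $P(\|\overline{g}\|_{C^m(\overline{\Omega})})$ is a finite constant depending only on $\overline{g}$, $d$, and $r$.

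It then remains to convert norms. Because $\Omega$ is bounded, every order-$\le m$ classical derivative is square-integrable with $\|\,\cdot\,\|_{L^2(\Omega)} \le |\Omega|^{1/2}\,\|\,\cdot\,\|_{C^0(\overline{\Omega})}$, so $\|f \circ \overline{g}\|_{W^{m}(\Omega)} \le C_\Omega \, \|f \circ \overline{g}\|_{C^m(\overline{\Omega})}$; and by the embedding above, $\|f\|_{C^m(\overline{\Omega})} \le C_S \, \|f\|_{W^r(\Omega)}$. Chaining the three inequalities and recalling $W^m = W^{r-d-1}$ gives
\[
\| f \circ \overline{g} \|_{W^{r-d-1}(\Omega)} \le C_\Omega \, P\!\left( C_S \|\overline{g}\|_{W^r(\Omega)} \right) C_S \, \| f \|_{W^r(\Omega)} =: C_{\overline{g}} \, \| f \|_{W^r(\Omega)},
\]
which is the claim with $C_{\overline{g}}$ independent of $f$. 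The main technical obstacle is the careful bookkeeping in the Fa\`a di Bruno step — verifying that only derivatives of $\overline{g}$ of order $\le m$ and derivatives of $f$ of order $\le m$ ever occur, so that every factor lives in the spaces supplied by the embedding — together with checking that the domain regularity (bounded, Lipschitz boundary) is sufficient to invoke the Sobolev embedding up to the boundary and to justify that $f\circ\overline g$ has the asserted weak derivatives; the norm conversions are then routine.
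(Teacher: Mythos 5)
Your proof is correct and takes essentially the same route as the paper's: both arguments combine the multivariate Fa\`a di Bruno formula with the Sobolev embedding theorem, using exactly the gain $r-(r-d-1)=d+1>d=\tfrac{n}{2}$ (for $n=2d$) on the bounded Lipschitz domain to make all the relevant derivatives of $f$ (and the composition itself) classically meaningful, and then converting back to the $W^{r-d-1}$-norm via boundedness of $\Omega$. The only difference is bookkeeping: you bound every factor, including the products of derivatives of $\overline{g}$, uniformly through the embedding $W^r(\Omega)\hookrightarrow \mathcal{C}^{r-d-1}(\overline{\Omega})$, whereas the paper keeps those products in $L^2(\Omega)$ and estimates them by repeated Cauchy--Schwarz — your uniform-bound variant is, if anything, the cleaner presentation of the same idea.
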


\begin{proof}
Denote coordinates of $\Omega$ by $z^1,\ldots,z^{2d}$.
Let $f \in W^r(\Omega)$, \mod{$\overline g \in (W^r(\Omega))^{2d}$} and let $s \le r-d-1$. For $m>d$ the Sobolev embedding $W^m(\Omega) \subset \mathcal{C}(\overline{\Omega})$ holds. Therefore, the derivatives $\p^\alpha f = \frac{\p^{|\alpha|}f}{(z^1)^{\alpha_1}\ldots (z^{2d})^{\alpha_{2d}}}$ of $f$ fulfil $\p^\alpha f  \in \mathcal{C}(\overline{\Omega})$ for all multi-indices $\alpha$ with $|\alpha|\le s$. Moreover, each component of $\p^\alpha \mod{\overline g}$ with $|\alpha|\le s$ lies in $L^2(\overline{\Omega})$. 

A multivariate version of the Faá di Bruno formula \cite{Hardy2006} shows
\begin{gather*}
\p^\alpha (f \circ \mod{\overline g}) = \sum_\pi (\p^{\alpha(\pi)} f) \circ \mod{\overline g} \cdot \mod{\overline g}_\pi ,\end{gather*}%
where $\pi$ runs through the set of partitions of the unordered $|\alpha|$-tuple (multi-set)
\[\{\underbrace{1,\ldots,1}_{\alpha_1 \text{ times}},\ldots, \underbrace{2d,\ldots,2d}_{\alpha_{2d} \text{ times}} \}\] and defines multi-indices $\alpha(\pi)$ for derivatives with $|\alpha(\pi)|\le s$.

The expression $\mod{\overline g}_\pi$ consists of products of derivatives of $\mod{\overline g}$ of order less than or equal to $s$. For each $\pi$ the norm $\| \mod{\overline g}_\pi \|_{L^2(\Omega)}$ can, therefore, be bounded by a repeated application of the Cauchy\mod{-Schwarz} inequality in $L^2(\Omega)$. 
Moreover, $\p^{\alpha(\pi)} f  \in \mathcal{C}(\overline{\Omega})$. As $W^{r-i}(\Omega) \subset C(\overline{\Omega})$ for all $i \le s$,
$\|\p^\alpha (f \circ \mod{\overline g})\|_{L^2(\Omega)}$ is bounded in terms of $\|f\|_{W^r(\Omega)}$ and a $\mod{\overline g}$ dependent constant $C_{\mod{\overline g}}>0$.
\end{proof}

\begin{proposition}\label{prop:OpPhiLdBound}
Let $\Omega \subset \R^{2d}$ an open, non-empty, bounded domain with Lipschitz boundary. Let $r>d$ and $g_{\mathrm{ref}} \in (W^r(\Omega))^d$. Consider the map $\Phi^{(\infty)}$ defined by
\[
\Phi^{(\infty)}(L_d)(\overline{x}) = \mathrm{DEL}(L_d)(\overline{x},g_{\mathrm{ref}}(\overline x)).
\]
The map $\Phi^{(\infty)}$ considered as a linear operator $\Phi^{(\infty)} \colon W^{r+1}(\Omega) \to (W^{r-d-1}(\Omega))^d$ is bounded.
\end{proposition}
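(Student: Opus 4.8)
The plan is to split $\Phi^{(\infty)}(L_d)$ into the two contributions coming from the two summands of the discrete Euler--Lagrange operator and to estimate each separately. Writing $\overline{x} = (x_0,x_1)$ and recalling $\mathrm{DEL}(L_d)(x_0,x_1,x_2) = \nabla_2 L_d(x_0,x_1) + \nabla_1 L_d(x_1,x_2)$, the substitution $x_2 = g_{\mathrm{ref}}(\overline x)$ gives
\[
\Phi^{(\infty)}(L_d)(x_0,x_1) = \nabla_2 L_d(x_0,x_1) + (\nabla_1 L_d)\bigl(\overline{g}(x_0,x_1)\bigr),
\]
where $\overline{g}\colon \Omega \to \R^{2d}$ is the map $\overline{g}(x_0,x_1) = (x_1, g_{\mathrm{ref}}(x_0,x_1))$. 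I would then argue component by component for $k = 1,\ldots,d$.

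First I would observe that $\overline{g} \in (W^r(\Omega))^{2d}$: its first $d$ components are the coordinate projection $(x_0,x_1)\mapsto x_1$, which is smooth with bounded derivatives on the bounded domain $\Omega$ and hence lies in $W^r(\Omega)$, while its last $d$ components are $g_{\mathrm{ref}} \in (W^r(\Omega))^d$ by hypothesis. Since $L_d \in W^{r+1}(\Omega)$, every component of $\nabla_1 L_d$ and of $\nabla_2 L_d$ lies in $W^r(\Omega)$ with $W^r$-norm bounded by $\|L_d\|_{W^{r+1}(\Omega)}$.

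The first summand is evaluated at the identity, so $(\nabla_2 L_d)^k \in W^r(\Omega)$ and, using the continuous inclusion $W^r(\Omega) \hookrightarrow W^{r-d-1}(\Omega)$ (valid because $r>d$ forces the integer index $r-d-1 \ge 0$, and the $W^{r-d-1}$-norm collects a subset of the derivatives appearing in the $W^r$-norm so the inclusion norm is at most $1$), I obtain $\|(\nabla_2 L_d)^k\|_{W^{r-d-1}(\Omega)} \le \|L_d\|_{W^{r+1}(\Omega)}$. For the second summand I apply \cref{lem:CSSobolev} with $f = (\nabla_1 L_d)^k \in W^r(\Omega)$ and the map $\overline{g}$, which yields a constant $C_{\overline{g}}>0$ depending only on $\overline{g}$ (hence on $g_{\mathrm{ref}}$, $\Omega$, $r$, $d$, but not on $L_d$) such that
\[
\|(\nabla_1 L_d)^k \circ \overline{g}\|_{W^{r-d-1}(\Omega)} \le C_{\overline{g}}\,\|(\nabla_1 L_d)^k\|_{W^r(\Omega)} \le C_{\overline{g}}\,\|L_d\|_{W^{r+1}(\Omega)}.
\]
Summing the two estimates over $k$ via the triangle inequality produces a single constant $C$ with $\|\Phi^{(\infty)}(L_d)\|_{(W^{r-d-1}(\Omega))^d} \le C\,\|L_d\|_{W^{r+1}(\Omega)}$, which is precisely the asserted boundedness.

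The substantive work is already encapsulated in \cref{lem:CSSobolev}, so this argument is mostly bookkeeping; the point to get right is the derivative count, namely that starting from $L_d \in W^{r+1}$ we have $\nabla_1 L_d \in W^r$ and that composition with $\overline{g}$ costs exactly the $d+1$ derivatives accounted for by the lemma, landing in the target space $W^{r-d-1}$. I expect the only mild subtlety to be confirming that $\overline{g}$ maps $\Omega$ into the domain of $\nabla_1 L_d$ so that the composition is pointwise well-defined, which follows from the standing assumption that $g_{\mathrm{ref}}$ takes values consistent with $\overline{g}(\Omega) \subseteq \Omega$.
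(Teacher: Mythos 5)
Your proof is correct and takes essentially the same route as the paper's: the identical decomposition of $\mathrm{DEL}$ into the $\nabla_2 L_d$ summand (handled via the trivial embedding $W^r(\Omega)\hookrightarrow W^{r-d-1}(\Omega)$) and the composition $(\nabla_1 L_d)\circ \overline{g_{\mathrm{ref}}}$ with $\overline{g_{\mathrm{ref}}}(x_0,x_1)=(x_1,g_{\mathrm{ref}}(x_0,x_1))$ (handled by \cref{lem:CSSobolev}), followed by the triangle inequality. Your additional remarks---that $\overline{g_{\mathrm{ref}}}\in (W^r(\Omega))^{2d}$ because the projection components are smooth on the bounded domain, and that $\overline{g_{\mathrm{ref}}}(\Omega)\subseteq\Omega$ is required for the composition to make sense---only make explicit what the paper asserts without comment.
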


\begin{proof}
Let $k \in \{1,\ldots,d\}$ and let ${}_k(\cdot)$ denote the $k$th component of a function. For $(x_0,x_1) \in \Omega$ define $\mod{\overline {g_\rf}}(x_0,x_1) = (x_1,g_\rf (x_0,x_1))$. We have $\mod{\overline {g_\rf}} \in (W^r(\Omega))^{2d}$. Let $L_d \in W^{r+1}(\Omega)$. Let $f = {}_k(\nabla_1 L_d)$. We have $f \in W^r(\Omega)$.

Now $\|{}_k(\Phi^{(\infty)})(L_d)\|_{W^{r-d-1}}$ can be bounded in terms of $\|L_d\|_{W^{r+1}}$ using \cref{lem:CSSobolev}:
\begin{align*}
	\| {}_k (\Phi^{(\infty)})(L_d) \|_{W^{r-d-1}}
	&\le \| {}_k (\nabla_2 L_d) \|_{W^{r-d-1}} + \|f \circ \mod{\overline {g_\rf}} \|_{W^{r-d-1}}\\
	&\le \| L_d\|_{W^{r}} + C_{\mod{\overline {g_\rf}}} \| f\|_{W^r}\\
	&\le \| L_d\|_{W^{r+1}} + C_{\mod{\overline {g_\rf}}} \| L_d\|_{W^{r+1}}
	=(1+C_{\mod{\overline {g_\rf}}})\| L_d\|_{W^{r+1}}
\end{align*} for a $\mod{\overline {g_\rf}}$ dependent constant $C_{\mod{\overline {g_\rf}}} >0$.
\end{proof}

\begin{assumption}\label{ass:ConvergenceRatesLd}
Let $\Omega \subset \R^{2d}$ an open, non-empty, bounded domain with locally Lipschitz boundary. Consider a kernel $K \colon \Omega \times \Omega \to \R$ such that the RKHS $U$ embeds continuously into the Sobolev space $W^{r+1}(\Omega)$ for $r>2d+1$. Assume that the true discrete dynamical system $(x_0,x_1) \mapsto (x_1,x_2)$ on $\Omega$ can be described by a map $g_{\mathrm {ref}} \in (W^r(\Omega))^d$, where $x_2 = g_{\mathrm {ref}}(x_0,x_1)$.
\end{assumption}

\begin{remark}[Stricter smoothness assumption]
	Comparing \cref{ass:ConvergenceRatesLd} and \cref{ass:trueSystem}, the smoothness assumptions on the dynamics and on the RKHS $\mathcal{U}$ appear to be stricter for discrete Lagrangian models than for continuous models. This is related to the requirement that the target space of $\Phi^{(\infty)}$ (\cref{prop:OpPhiLdBound}) embeds into $\mathcal{C}(\overline{\Omega})$ to apply smoothening theory (\cref{thm:SobolevBoundsInterpolation}).
\end{remark}

When \cref{ass:ConvergenceRatesLd} holds, then by the Sobolev embedding theorem \cite[§4]{SobolevSpacesAdams2003}, $W^{r+1}(\Omega)$ embeds continuously into $C^{2+d}(\overline{\Omega}) \subset C^2(\overline{\Omega})$.
Therefore, the kernel $K$ necessarily fulfils sufficient smoothness properties such that for $p_b \in \R^d$, $c_b \in \R$, $\overline{x}_b \in \Omega$ we can define to any finite subset $\Omega_0 = \{(x_0,x_1)\}_{j=1}^M \subset \Omega$ a Lagrangian $L_{d,\Omega_0} \in U$ by \eqref{eq:LdPosterior}.

The following \namecref{thm:ConvergenceRateLd} provides a bound on the extrapolation error $\DEL({L_{d,\Omega_0}})(x_0,x_1,x_2)$ on observations $(x_0,x_1,x_2)$ of the true system, when ${L_{d,\Omega_0}}$ is inferred from finitely many observations. \Cref{thm:ConvergenceRateLd} corresponds to \cref{thm:ConvergenceRateL}, which relates to continuous Lagrangian models.

\begin{theorem}\label{thm:ConvergenceRateLd}
Under \cref{ass:ConvergenceRatesLd}, assume that for $p_b \in \R^d$, $c_b \in \R$, $\overline{x}_b \in \Omega$ there exists a discrete Lagrangian $L_d^\rf$ consistent with the normalisation \eqref{eq:NormalisationThmLdTemporal} and the dynamics, i.e.\ $\mathrm{DEL}(L_d^\rf)(\overline{x},g_\rf (\overline{x}))=0$ for all $\overline{x}\in \Omega$.
Denote by ${}_{k}\Phi^\infty(L_d)(\overline{x}) = {}_{k}\DEL(L_d)(\overline{x},g_{\mathrm{ref}}(\overline{x}))$ for $L_d\in U$, $\overline x \in \Omega$
the $k$th component of $\DEL(L_d)(\overline{x},g_{\mathrm{ref}}(\overline{x}))$ ($k=1,\ldots,d$).

Then there exist constants $\delta_r,C_r>0$ such that for all finite $\Omega_0 \subset \overline{\Omega}$ (defining $L_{d,\Omega_0}$) with
$h_{\Omega_0} = \dist(\Omega_0,\overline{\Omega}) < \delta_r$
and for all $l=0,1,\ldots,r-d-1$, $k=1,\ldots,d$
\[
\| {}_{k}\Phi^\infty(L_{d,\Omega_0})\|_{W^l(\Omega)}
\le C_r h^{r-d-1-l}_{\Omega_0} \|L_d^{\mathrm{ref}}\|_U.
\]

\end{theorem}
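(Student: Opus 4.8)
The plan is to follow the structure of the proof of \cref{thm:ConvergenceRateL} for continuous Lagrangians, adapting each step to the discrete setting using \cref{prop:OpPhiLdBound} in place of the direct operator-norm bound and \cref{thm:SobolevBoundsInterpolation} for the interpolation estimate. First I would establish that the map $\Phi^{(\infty)} \colon U \to (W^{r-d-1}(\Omega))^d$ has bounded operator norm. This follows immediately by composing the continuous embedding $U \hookrightarrow W^{r+1}(\Omega)$ from \cref{ass:ConvergenceRatesLd} with the bounded linear operator $\Phi^{(\infty)} \colon W^{r+1}(\Omega) \to (W^{r-d-1}(\Omega))^d$ established in \cref{prop:OpPhiLdBound} (which requires $r>d$, satisfied since $r>2d+1$). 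Hence there is a constant $c_r>0$ with $\|{}_k\Phi^\infty(L_d)\|_{W^{r-d-1}(\Omega)} \le c_r \|L_d\|_U$ for each component $k$.

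Next I would invoke the minimisation characterisation of the inferred Lagrangian. By \cref{rem:GPisMinProblem}, the Lagrangian $L_{d,\Omega_0} \in U$ defined by \eqref{eq:LdPosterior} is the minimiser of $\|\cdot\|_U$ among all elements of $U$ satisfying the normalisation \eqref{eq:NormalisationThmLdTemporal} and the interpolation conditions $\Phi^\infty(L_{d,\Omega_0})(\overline{x})=0$ for all $\overline{x}\in\Omega_0$. Since the assumed reference Lagrangian $L_d^\rf \in U$ satisfies the same normalisation and the stronger condition $\Phi^\infty(L_d^\rf)(\overline{x})=0$ for all $\overline{x}\in\Omega$, it is a competitor in the minimisation, giving the crucial norm bound $\|L_{d,\Omega_0}\|_U \le \|L_d^\rf\|_U$. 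Separately, each component $u := {}_k\Phi^\infty(L_{d,\Omega_0})$ lies in $W^{r-d-1}(\Omega)$ and vanishes on $\Omega_0$ by construction, so \cref{thm:SobolevBoundsInterpolation} applies with the Sobolev order $r-d-1$ (this requires $r-d-1 > \tfrac12 \cdot 2d = d$, i.e.\ $r>2d+1$, which is exactly the hypothesis of \cref{ass:ConvergenceRatesLd}).

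Combining these pieces yields the result: for $h_{\Omega_0}<\delta_r$ and $0\le l\le r-d-1$, \cref{thm:SobolevBoundsInterpolation} gives
\[
\|{}_k\Phi^\infty(L_{d,\Omega_0})\|_{W^l(\Omega)}
\le \tilde C_r\, h_{\Omega_0}^{(r-d-1)-l}\, \|{}_k\Phi^\infty(L_{d,\Omega_0})\|_{W^{r-d-1}(\Omega)},
\]
and applying the operator bound together with $\|L_{d,\Omega_0}\|_U \le \|L_d^\rf\|_U$ bounds the right-hand factor by $c_r\|L_d^\rf\|_U$. Absorbing constants into $C_r$ produces the claimed estimate. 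The main obstacle, and the reason the discrete statement is weaker (decay $h^{r-d-1}$ rather than $h^r$), is the degradation of Sobolev regularity under composition with $\overline{g_\rf}$ in \cref{prop:OpPhiLdBound}: the chain rule via the Faá di Bruno formula forces a loss of $d+1$ derivatives to keep the composition's derivatives in $L^2$, so one must be careful that the target Sobolev order $r-d-1$ still exceeds $d$ for the interpolation theorem to be applicable. Verifying that this regularity bookkeeping is consistent across \cref{ass:ConvergenceRatesLd}, \cref{prop:OpPhiLdBound}, and \cref{thm:SobolevBoundsInterpolation} is the one place demanding genuine attention; the remaining steps are formal translations of the continuous argument.
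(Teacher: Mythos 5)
Your proposal is correct and follows essentially the same route as the paper's own proof: bound $\Phi^{(\infty)}\colon U \to (W^{r-d-1}(\Omega))^d$ by composing the embedding $U \hookrightarrow W^{r+1}(\Omega)$ with \cref{prop:OpPhiLdBound}, apply \cref{thm:SobolevBoundsInterpolation} at Sobolev order $r-d-1$ (valid since $r>2d+1$), and close with the minimisation principle giving $\|L_{d,\Omega_0}\|_U \le \|L_d^\rf\|_U$. Your explicit bookkeeping of why $r-d-1>d$ is exactly the threshold needed for the interpolation theorem matches the paper's (more implicit) use of the hypothesis $r>2d+1$.
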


\begin{proof}
Let $C_{r,\Phi}>0$ be a bound for the operator norm of $\Phi^{(\infty)} \colon W^{r+1}(\Omega) \to (W^{r-d-1}(\Omega))^d$ (see \cref{prop:OpPhiLdBound}).
As $r>2d+1$, by \cref{thm:SobolevBoundsInterpolation} there exists $\delta_r, C_r>0$ such that for all finite subsets $\Omega_0 \subset \overline{\Omega}$ (defining $L_{d,\Omega_0}$) with $h_{\Omega_0} \le \delta_r$ and for all $l=0,\ldots,r-d-1$ we have
\begin{align*}
\| \Phi^{(\infty)}(L_{d,\Omega_0}) \|_{W^l(\Omega)}
&\le \tilde C_r (h_{\Omega_0})^{r-d-1-l} \|  \Phi^{(\infty)}(L_{d,\Omega_0}) \|_{W^{r-d-1}(\Omega)}\\
&\le \tilde C_r (h_{\Omega_0})^{r-d-1-l} C_{r,\Phi} \|  L_{d,\Omega_0}\|_{W^{r+1}(\Omega)}\\
&\le \tilde C_r   (h_{\Omega_0})^{r-d-1-l} C_{r,\Phi} \tilde c\|  L_{d,\Omega_0}\|_U,
\end{align*}
where $\tilde c$ is related to the embedding $U\subset W^{r+1}$.
Discrete Lagrangians obtained via \eqref{eq:LdPosterior} fulfil a minimisation principle as explained in the proof of \cref{thm:ConvergenceThmLdTemporalNonDegenerate} (in direct analogy to \cref{rem:GPisMinProblem}, which is formulated for continuous Lagrangians). 
Thus $ \|  L_{d,\Omega_0}\|_U \le \|  L_d^\rf\|_U$. This completes the proof.
\end{proof}

For $L_d \in \mathcal{C}^1(\Omega)$, $(x_0^\ast,x_1^\ast),(x_1^\ast,x_2^\ast) \in \Omega$ with $\DEL(L_d)(x_0^\ast, x_1^\ast,x_2^\ast)=0$ and $\nabla_{1,2}L_d(x_1^\ast,x_2^\ast) = \frac{\p^2 L_d}{\p x_1 \p x_2}(x_1^\ast,x_2^\ast)$ invertible, the triple $(x_0^\ast,x_1^\ast,x_2^\ast)$ is called {\em non-degenerate motion segment} of $L_d$.
By the implicit function theorem we can define a unique continuous map $g$ on a connected open neighbourhood $\mathfrak O$ of $(L_d,(x_0^\ast,x_1^\ast)) \in \mathcal{C}^1(\Omega) \times \Omega$ 
with $g(L_d)(x_0^\ast,x_1^\ast)=x_2^\ast$ and
\[
\mathrm{DEL}(L_d)(\overline{x},g(L_d)(\overline{x}))=0 \quad \forall (L_d,\overline x) \in \mathfrak{O}.
\]
The map $g(L_d)$ is the {\em discrete evolution rule} of the discrete dynamical system defined by the Lagrangian $L_d$.


We have the following \mod{lower bound for the rate of the} pointwise convergence \mod{of the discrete evolution map}.

\begin{corollary}[Convergence rates discrete evolution rule]\label{cor:RateAccLd}
In the setting of \cref{thm:ConvergenceThmLdTemporalNonDegenerate} assume $\Omega=\Omega_a=\Omega_b$ and that \cref{ass:ConvergenceRatesLd} is fulfilled in addition.
Let $\overline{x}^\ast=(x_0^\ast,x_1^\ast),(x_1^\ast,x_2^\ast) \in \Omega$ with $x_2^\ast=g_\rf(\overline{x}^{\mod{\ast}})$ be a nondegenerate motion sequence of the limit Lagrangian $L_{d,(\infty)}$ defined in \cref{thm:ConvergenceThmLdTemporalNonDegenerate}. 
Denote $\Omega_0^{k} := \{\overline{x}^{(j)}\}_{j=1}^k$.
Then there exist $K\in \N$, $C_r>0$ such that for all $k>K$ the discrete evolution $g(L_{d,(k)})(\overline{x}^\ast)$ can be defined
and
\begin{equation*}\label{eq:cor_RateAccLd}
\| g(L_{d,(k)})(\overline{x}^\ast) - g_{\mathrm{ref}}(\overline{x}^\ast) \| 
\le C_r (h_{\Omega_0^k})^{r-d-1}.
\end{equation*}
\end{corollary}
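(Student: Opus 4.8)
The plan is to transcribe the argument of \cref{cor:RateAcc} into the discrete setting. Two ingredients are combined: first, I must show that the discrete evolution rule $g(L_{d,(k)})(\overline{x}^\ast)$ is well defined for all large $k$ and converges to $x_2^\ast$, which rests on transferring the non-degeneracy of $\nabla_{1,2}L_{d,(\infty)}$ at the motion segment $(x_1^\ast,x_2^\ast)$ to the approximants $L_{d,(k)}$; second, I must control the residual $\DEL(L_{d,(k)})(\overline{x}^\ast,g_\rf(\overline{x}^\ast))$ at the rate $h^{r-d-1}$ using \cref{thm:ConvergenceRateLd}. A mean value identity then converts the residual bound into the asserted bound on $\|g(L_{d,(k)})(\overline{x}^\ast)-g_\rf(\overline{x}^\ast)\|$.

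First I would note that \cref{ass:ConvergenceRatesLd} upgrades the convergence of \cref{thm:ConvergenceThmLdTemporalNonDegenerate} to $\mathcal{C}^2$: since $U\hookrightarrow W^{r+1}(\Omega)$ with $r>2d+1$ and $\Omega\subset\R^{2d}$, the Sobolev embedding theorem gives $W^{r+1}(\Omega)\hookrightarrow\mathcal{C}^2(\overline{\Omega})$, so the convergence $L_{d,(k)}\to L_{d,(\infty)}$ in $\|\cdot\|_U$ from \cref{prop:ConvergenceInBDiscrete} implies convergence in $\|\cdot\|_{\mathcal{C}^2(\overline{\Omega})}$; in particular $\nabla_{1,2}L_{d,(k)}\to\nabla_{1,2}L_{d,(\infty)}$ uniformly. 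As $\nabla_{1,2}L_{d,(\infty)}(x_1^\ast,x_2^\ast)$ is invertible, there are $K_1\in\N$, a fixed neighbourhood of $(x_1^\ast,x_2^\ast)$, and a constant $c_0>0$ such that for $k>K_1$ the matrix $\nabla_{1,2}L_{d,(k)}$ is invertible on that neighbourhood with inverse bounded by $c_0$. Feeding $L_{d,(k)}\to L_{d,(\infty)}$ in $\mathcal{C}^1$ into the implicit-function-theorem set-up preceding the corollary, for large $k$ the pair $(L_{d,(k)},\overline{x}^\ast)$ lies in the neighbourhood $\mathfrak{O}$, so $g(L_{d,(k)})(\overline{x}^\ast)$ is defined and converges to $g(L_{d,(\infty)})(\overline{x}^\ast)=x_2^\ast=g_\rf(\overline{x}^\ast)$.

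Next I would bound the residual and assemble the estimate. Writing $\Phi^{(\infty)}(L_d)(\overline{x})=\DEL(L_d)(\overline{x},g_\rf(\overline{x}))$, an application of \cref{thm:ConvergenceRateLd} together with the embedding $W^{r-d-1}(\Omega)\hookrightarrow\mathcal{C}(\overline{\Omega})$ (valid since $r-d-1>d$), exactly as in the continuous \cref{cor:RateAcc}, yields for $k$ large the pointwise estimate $\|\DEL(L_{d,(k)})(\overline{x}^\ast,g_\rf(\overline{x}^\ast))\|\le C_r\,h_{\Omega_0^k}^{r-d-1}$, the norm $\|L_d^\rf\|_U$ being absorbed into $C_r$. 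To transfer this to the evolution error I subtract the vanishing term $\DEL(L_{d,(k)})(\overline{x}^\ast,g(L_{d,(k)})(\overline{x}^\ast))=0$ and use that only the summand $\nabla_1 L_{d,(k)}(x_1^\ast,\cdot)$ of $\DEL$ depends on the last slot: by the fundamental theorem of calculus
\[
\DEL(L_{d,(k)})(\overline{x}^\ast,g_\rf(\overline{x}^\ast))
= M_k\bigl(g_\rf(\overline{x}^\ast)-g(L_{d,(k)})(\overline{x}^\ast)\bigr),
\]
where $M_k=\int_0^1\nabla_{1,2}L_{d,(k)}(x_1^\ast,\gamma_k(t))\,\d t$ and $\gamma_k(t)=g(L_{d,(k)})(\overline{x}^\ast)+t\,(g_\rf(\overline{x}^\ast)-g(L_{d,(k)})(\overline{x}^\ast))$. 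For $k$ large the path $\gamma_k$ stays inside the neighbourhood of $(x_1^\ast,x_2^\ast)$ on which the inverse is bounded by $c_0$, so $\|M_k^{-1}\|\le c_0$ and hence $\|g_\rf(\overline{x}^\ast)-g(L_{d,(k)})(\overline{x}^\ast)\|\le c_0 C_r\,h_{\Omega_0^k}^{r-d-1}$, which is the claim after renaming constants.

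The main obstacle is the uniform invertibility of the averaged mixed Hessian $M_k$: it requires \emph{simultaneously} the $\mathcal{C}^2$-convergence $\nabla_{1,2}L_{d,(k)}\to\nabla_{1,2}L_{d,(\infty)}$ and the convergence $g(L_{d,(k)})(\overline{x}^\ast)\to x_2^\ast$, so that the whole segment $\gamma_k$ remains in the region where the limiting mixed Hessian is non-degenerate; only then does the residual bound translate into a bound on the evolution error. A secondary point requiring care is the passage from the Sobolev residual estimate of \cref{thm:ConvergenceRateLd} to a genuinely pointwise estimate at $\overline{x}^\ast$, which is exactly the step that accounts for the loss of $d+1$ derivatives relative to the continuous \cref{cor:RateAcc}.
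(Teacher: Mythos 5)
Your proposal is correct and follows essentially the same route as the paper's proof: Sobolev embedding of $U$ into $\mathcal{C}^2(\overline{\Omega})$, transfer of the non-degeneracy of $\nabla_{1,2}L_{d,(\infty)}$ at $(x_1^\ast,x_2^\ast)$ to the approximants $L_{d,(k)}$ on a fixed convex neighbourhood containing the segment between $g(L_{d,(k)})(\overline{x}^\ast)$ and $g_\rf(\overline{x}^\ast)$, the pointwise residual bound $\|\DEL(L_{d,(k)})(\overline{x}^\ast,g_\rf(\overline{x}^\ast))\| \le C\, h_{\Omega_0^k}^{r-d-1}$ obtained from \cref{thm:ConvergenceRateLd}, and a mean-value argument converting the residual into the evolution error. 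The only deviation is the final linear-algebra step: you use the integral form of the mean value theorem with the averaged matrix $M_k$ — where you should note that $\|M_k^{-1}\|\le c_0$ does not follow from pointwise invertibility along $\gamma_k$ alone (averages of invertible matrices can be singular), but does follow because, by uniform $\mathcal{C}^2$-convergence and the shrinking of $\gamma_k$ to $x_2^\ast$, the matrices $\nabla_{1,2}L_{d,(k)}(x_1^\ast,\gamma_k(t))$ all lie in a small convex ball around the invertible limit $\nabla_{1,2}L_{d,(\infty)}(x_1^\ast,x_2^\ast)$, hence so does their average — whereas the paper instead applies the scalar mean value theorem componentwise (one intermediate point $x'$ per component) and lower-bounds via a uniform bound $\delta>0$ on the rows and columns of $\nabla_{1,2}L_{d,(k)}$ near $(x_1^\ast,x_2^\ast)$; with that one remark added, your variant is, if anything, the cleaner of the two.
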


\begin{proof}
By \cref{thm:ConvergenceThmLdTemporalNonDegenerate}, $L_{d,(k)}$ converges to $L_{d,(\infty)}$ in the RKHS $U$, which is continuously embedded into $\mathcal C^2(\overline \Omega)$ by \cref{ass:ConvergenceRatesLd}.
Therefore and by the non-degeneracy properties of $L_{d,(\infty)}$, there exists a neighbourhood $O$ of $g_\rf(\overline{x}^\ast)$, an index $K \in \N$, and $\delta>0$ such that for all $k>K$ and all $\overline{x} \in O$
each row and each column vector of $\nabla_{1,2}L_{d,(k)}(\overline x) = \frac{\p^2 L_{d,(k)}}{\p x^1 \p x^2}(\overline{x})$ have norm at least $\delta >0$.
%
We can assume $O$ to be convex and $K$ so large that for all $k>K$ the line segment between $g(L_{d,(k)})(\overline{x}^\ast)$ and $g(L_{d,(\infty)})(\overline{x}^\ast)$ 
is contained in $O$.

Let $j \in \{1,\ldots,d\}$ denote an index. Again, we denote the component of a function by a lower-left-aligned index.
By \cref{thm:ConvergenceRateLd} (with $l=0$) there exists $\tilde C_r>0$ such that for all $k>K$
\begin{align*}
\tilde C_r (h_{\Omega_0^k})^{r-d-1}
&\ge \| {}_j\DEL(L_{d,{(k)}})(\overline{x}^\ast,g_\rf (\overline{x}^\ast))\|\\
&=  \| {}_j\DEL(L_{d,{(k)}})(\overline{x}^\ast,g_\rf (\overline{x}^\ast)) 
- \underbrace{{}_j\DEL(L_{d,{(k)}})(\overline{x}^\ast,g(L_{d,{(k)}})(\overline{x}^\ast))}_{=0}  \|\\
&= \| {}_j\nabla_1 L_{d,(k)}(x_1^{\ast},g_\rf (\overline{x}^\ast)) - {}_j\nabla_1 L_{d,(k)}(x_1^{\ast},g(L_{d,(k)}) (\overline{x}^\ast)) \|\\
&= \| \nabla_2({}_j\nabla_{1})L_{d,{(k)}}(x_1^{\ast},x')^\top ( g_\rf (\overline{x}^\ast) - g(L_{d,{(k)}})(\overline{x}^\ast))\|\\
& \ge \delta \|g_\rf (\overline{x}^\ast) - g(L_{d,{(k)}})(\overline{x}^\ast))\|.
\end{align*}
Above, $x'$ lies on the line segment between $g(L_{d,(k)})(\overline{x}^\ast)$ and $g(L_{d,(\infty)})(\overline{x}^\ast)$. Its existence is guaranteed by the intermediate value theorem.
The expression $\nabla_2({}_j\nabla_{1})L_{d,{(k)}}$ denotes the gradient of ${}_j\nabla_{1}L_{d,{(k)}}$ with respect to the second input slot of $L_{d,{(k)}}$.
The last inequality holds true since the norm of each row and each column of $ \nabla_{1,2}L_{d,{(k)}}(\overline{x}^\ast,x')$ is bounded from below by $\delta>0$. Thus the theorem follows with $C_r = \frac {\tilde C_r} \delta$.
\end{proof}

\section{Summary}\label{sec:Summary}

We have introduced a method to learn general continuous Lagrangians and discrete Lagrangians from observational data of dynamical systems that are governed by variational ordinary differential equations. The method is based on kernel-based, meshless collocation methods for solving partial differential equations \cite{SchabackWendland2006}. In our context, collocation methods are used to solve the Euler--Lagrange equations that we interpret as a partial differential equations for a Lagrangian function $L$, or discrete Lagrangian $L_d$, respectively.
Additionally, the use of Gaussian processes gives access to a statistical framework that allows for a quantification of the model uncertainty of the identified dynamical system. This could be used for adaptive sampling of data points. Uncertainty quantification can be efficiently computed for any quantity that is linear in the Lagrangian, such as the Hamiltonian or symplectic structure of the system, which is of relevance in the context of system identification.
We prove the convergence of the methods to a true Lagrangian and prove \mod{lower bounds for} convergence rates for the inferred equations of motion, acceleration fields, and evolution rules as the maximal distance \mod{$h$} of observation data points converges to zero.
\mod{Indeed, provided the model's kernel and the underlying dynamic is sufficiently regular, we can guarantee convergence of the acceleration field with rate at least $h^r$ when the true underlying acceleration field is $r$-times continuously differentiable. Similar results are shown for the discrete evolution map, when discrete Lagrangians are learnt.}

The article overcomes the major difficulty that Lagrangians are not uniquely determined by a system's motions and the presence of degenerate solutions to the Euler--Lagrange equations. 
This is tackled by a careful consideration of {regularisation} conditions that reduce the gauge freedom of Lagrangians but do not restrict the generality of the ansatz. Our method profits from implicit regularisation that can be understood as an extremization of a reproducing kernel Hilbert space norm, based on techniques of game theory \cite{OwhadiScovel2019}. This interpretation as convex optimisation problems is the key ingredient that allows us to provide a rigorous proof of convergence of the method as the maximal distance of observation data points converges to zero.

{In \cite{LagrangianGPpde} we have extended} the method to dynamical systems governed by variational partial differential equations.
Another direction {of research} is {to adapt the method to dynamical systems with low regularity such as systems with collisions and to incorporate noise models into our statistical framework. Furthermore, a}
combination with detection methods for Lie group variational symmetries \cite{SymHNN,SymLNN} or with detection methods of travelling waves \cite{DLNNDensity,DLNNPDE} {is of interest}. This may allow for a quantitative analysis of the interplay of symmetry assumptions and model uncertainty.

\section*{Acknowledgments}
\mod{The author would like to thank Konstantin Sonntag for a helpful discussion that has advanced the convergence proof. Moreover, the author acknowledges the Ministry for Culture and Science of the State of North Rhine-Westphalia (MKW), Germany, and computing time provided by the Paderborn Center for Parallel Computing (PC2), Paderborn, Germany.
}

\section*{Data availability}
The data that support the findings of this study are openly available in the GitHub repository Christian-Offen/Lagrangian\_GP at\\
\url{https://github.com/Christian-Offen/Lagrangian_GP}.
An archived version \cite{LagrangianGPSoftware} of release v1.0 of the GitHub repository is openly available at \url{https://doi.org/10.5281/zenodo.11093645}.

\bibliographystyle{plainurl}
\bibliography{literature}

\appendix
\section*{Appendices}

\section{{Gaussian fields}}\label{app:GaussianField}

\subsection{Definitions}

We recall from \cite{OwhadiScovel2019} definitions and properties of Gaussian fields and their interpretation as weak random variables.

\begin{definition}\label{def:SymOperator}
	Let $V$ be a topological vector space and $V^\ast$ its topological dual.
	A linear operator $T \colon V^\ast \to V$ is {\em positive symmetric} if $\psi(T\phi) = \phi (T \psi)$ for all $\phi,\psi \in V^\ast$ and $\phi (T \phi)\ge 0$ for all $\phi \in V^\ast$.
\end{definition}

Let $(\gls{B}, \| \cdot \|_B)$ be a separable Banach space with quadratic norm $\| \cdot \|_B$, i.e.\ there exists a linear, positive symmetric, bijection $\gls{Q} \colon B^\ast \to B$ such that $\| u \|_B = (Q^{-1}u)(u)$.
Even though this implies that $B$ is a Hilbert space, the Banach space terminology is used as the dual pairing of $B^\ast$ and $B$ does not coincide with the inner product pairing via the Riesz representation theorem. Moreover, as any positive symmetric linear operator $B^\ast \to B$ is automatically continuous \cite[Prop.~11.2]{OwhadiScovel2019}, $Q \colon B^\ast \to B$ is continuous.

\begin{definition}[{{\cite[Def.~17.3]{OwhadiScovel2019}}}]\label{def:GPGeneralTheory}
Let $\gls{T} \colon B^\ast \to B$ be a positive symmetric linear operator, $u \in B$, $(\mathcal A, \Sigma, \mathbb{P})$ a probability space with $\mathbb{P}$ a Borel measure, and $H \subset L^2(\mathcal A, \Sigma, \mathbb{P})$ a linear subspace such that each $X \in H$ is a Gaussian random variable. A linear map
\[
\gls{xi} \colon B^\ast \to H \subset L^2(\mathcal A, \Sigma, \mathbb{P})
\]
is a {\em Gaussian field with mean $u$ and covariance operator $T$} if for each $\phi \in B^\ast$ the random variable $\xi(\phi)$ is normally distributed with mean $\phi(u)$ and covariance $\phi(T\phi)$, i.e.\ $\xi(\phi) \sim \gls{Normal}(\phi(u),\phi(T\phi))$. We denote such a field by $\xi \sim \gls{Normal}(u,T)$. When $u=0$, then we say $\xi$ is a {\em centred Gaussian field}.
\end{definition}

\begin{remark}[Notation]
	
	Consider a Gaussian field $\xi \sim \mathcal{N}(u,T)$, $\xi \colon B^\ast \to L^2(\mathcal{A},\Sigma,\mathbb{P})$ as in \cref{def:GPGeneralTheory}. The Gaussian field $\xi$ post-composed with evaluation at $\omega\in \mathcal{A}$ is a linear map $\xi(\cdot)(\omega) \colon B^\ast \to \R$, which is an element in the algebraic dual to $B^\ast$. Strictly speaking, the map $\omega \mapsto \xi(\cdot)(\omega)$ cannot be interpreted as a $B$-valued random variable
	because it takes values in the {\em algebraic dual} to $B^\ast$ but not necessarily in the topological dual $B^{\ast \ast} \cong B$ because $\xi \colon B^\ast \to L^2(\mathcal A,\Sigma, \mathbb{P})$ might not be bounded. However, $\omega \mapsto \xi(\cdot)(\omega)$ admits the interpretation as a {\em weak} $B$-valued random variable \cite[§17.4]{OwhadiScovel2019} and we say that $\xi$ is a {\em Gaussian field on $B$}.
	
	For $\phi \in B^\ast$ we define $\phi(\xi) := \xi (\phi)$, which is the notation used in 
	\cref{sec:MLSetting,sec:Experiment,sec:ConvergenceAnalysis}.
\end{remark}

\begin{theorem}[{{\cite[Thm.~17.4]{OwhadiScovel2019}}}]\label{thm:existenceGaussianField}
To any $u \in B$ and symmetric positive covariance operator $T$ a Gaussian field $\xi \sim \mathcal{N}(u,T)$ exists.
\end{theorem}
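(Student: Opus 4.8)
The plan is to realise $\xi$ as the composition of an isometric embedding of $B^\ast$ into an auxiliary Hilbert space with an isonormal Gaussian process, shifted by the deterministic mean functional $\phi \mapsto \phi(u)$. First I would use the covariance operator to manufacture the correct second-order structure. Since $T \colon B^\ast \to B$ is positive symmetric, the bilinear form $b(\phi,\psi) := \phi(T\psi)$ on $B^\ast$ is symmetric (by the defining identity $\phi(T\psi)=\psi(T\phi)$) and positive semi-definite (by $\phi(T\phi)\ge 0$), and it is bounded because $T$ is automatically continuous \cite[Prop.~11.2]{OwhadiScovel2019}. The Cauchy--Schwarz inequality for $b$ shows that $N := \{\phi \in B^\ast : b(\phi,\phi)=0\}$ is a linear subspace and that $b$ descends to a genuine inner product on the quotient $B^\ast/N$. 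Completing $B^\ast/N$ yields a real Hilbert space $\mathcal H$ together with a bounded linear map $\iota \colon B^\ast \to \mathcal H$ satisfying $\langle \iota\phi,\iota\psi\rangle_{\mathcal H}=b(\phi,\psi)=\phi(T\psi)$. Because $B$ is a separable Hilbert space, $B^\ast$ is separable, the image of $\iota$ is dense by construction, and hence $\mathcal H$ is separable.

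Next I would construct an isonormal Gaussian process on $\mathcal H$. Fixing an orthonormal basis $(e_n)_{n\in\N}$ of $\mathcal H$ and a countable family of independent standard normal variables $(g_n)_{n\in\N}$ on a product probability space $(\mathcal A,\Sigma,\mathbb P)$, the assignment $W(h):=\sum_{n} \langle h,e_n\rangle_{\mathcal H}\, g_n$ converges in $L^2(\mathcal A,\Sigma,\mathbb P)$ and defines a linear isometry $W \colon \mathcal H \to L^2(\mathcal A,\Sigma,\mathbb P)$ whose image is a Gaussian Hilbert space: each $W(h)$ is centred Gaussian with $\mathrm{var}(W(h))=\|h\|_{\mathcal H}^2$, and finitely many such variables are jointly Gaussian. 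I then define
\[
\xi \colon B^\ast \to L^2(\mathcal A,\Sigma,\mathbb P), \qquad \xi(\phi) := W(\iota\phi) + \phi(u).
\]

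It remains to verify the defining properties of \cref{def:GPGeneralTheory}. Linearity of $\xi$ follows from linearity of $\iota$, of $W$, and of the evaluation functional $\phi\mapsto\phi(u)$; in particular the deterministic shift is itself linear in $\phi$, so no affine defect appears. For each fixed $\phi$, the variable $W(\iota\phi)$ is centred Gaussian with variance $\|\iota\phi\|_{\mathcal H}^2=\phi(T\phi)$, so $\xi(\phi)$ is Gaussian with mean $\phi(u)$ and variance $\phi(T\phi)$, i.e.\ $\xi(\phi)\sim\mathcal N(\phi(u),\phi(T\phi))$. Setting $H$ to be the closed linear span of $\xi(B^\ast)$, every element is a sum of a member of the Gaussian Hilbert space and a constant, hence Gaussian, so $H$ is a linear subspace of $L^2$ consisting of Gaussian random variables. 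This exhibits $\xi \sim \mathcal N(u,T)$ and proves existence. The main obstacle I expect is purely the measure-theoretic construction of the isonormal process---securing $L^2$-convergence of the defining series and the existence of a probability space carrying countably many independent Gaussians---together with tracking separability so that a countable orthonormal basis of $\mathcal H$ is available; once $(\mathcal A,\Sigma,\mathbb P)$ and $W$ are in hand, the identification of mean and covariance is a direct computation.
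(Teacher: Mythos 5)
Your proof is correct. Note that the paper itself offers no argument for this statement: it is imported verbatim from \cite[Thm.~17.4]{OwhadiScovel2019}, and your construction --- quotienting $B^\ast$ by the null space of the semi-inner product $b(\phi,\psi)=\phi(T\psi)$, completing to a Hilbert space $\mathcal{H}$, building an isonormal Gaussian map $W\colon \mathcal{H}\to L^2(\mathcal{A},\Sigma,\mathbb{P})$ from countably many i.i.d.\ standard normals, and shifting by the deterministic functional $\phi\mapsto\phi(u)$ --- is the standard existence proof and matches the route of the cited source, with the continuity of $T$ (\cite[Prop.~11.2]{OwhadiScovel2019}) correctly invoked to make $\iota$ bounded and hence $\mathcal{H}$ separable. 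Two minor polish points: you may simply take $H=\xi(B^\ast)$, which is already a linear subspace, rather than the closed span (if you do close it, you are implicitly using that $L^2$-limits of Gaussians are Gaussian, which is true but should be stated); and since $\phi(T\phi)=0$ is allowed, the verification tacitly admits degenerate (constant) Gaussians, which is consistent with \cref{def:GPGeneralTheory} but worth saying explicitly.
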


\begin{lemma}\label{lem:CovLemma}
Let $\xi \sim \mathcal{N}(u,T)$ for $u\in B$ and a positive symmetric operator $T\colon B^\ast \to B$. Then for $\phi, \psi \in B^\ast$ the covariance of $\xi(\phi)$ and $\xi(\psi)$ is given as
\[
\mathrm{cov}(\xi(\psi),\xi(\phi)) = \mathbb{E}[(\xi(\psi)-\psi(u))(\xi(\phi)-\phi(u))]
= \psi T \phi.\]
\end{lemma}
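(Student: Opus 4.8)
The plan is to reduce the computation of the covariance to the variance formula that is already built into the definition of a Gaussian field, using only the linearity of $\xi$ and a polarisation identity. First I would record the consequences of \cref{def:GPGeneralTheory}: applying $\xi$ to \emph{any} single element of $B^\ast$ produces a Gaussian random variable whose mean and variance are prescribed. In particular $\mathbb{E}[\xi(\phi)] = \phi(u)$ and $\mathrm{var}(\xi(\phi)) = \phi(T\phi)$ for every $\phi \in B^\ast$, and the analogous identities hold with $\phi$ replaced by $\psi$ or by the sum $\phi+\psi$. This immediately identifies the two mean terms in the definition of the covariance, $\mathbb{E}[(\xi(\psi)-\psi(u))(\xi(\phi)-\phi(u))] = \mathrm{cov}(\xi(\psi),\xi(\phi))$, so it remains only to evaluate this covariance.

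The key observation is that $\xi \colon B^\ast \to H$ is linear, so $\xi(\phi+\psi) = \xi(\phi) + \xi(\psi)$ as elements of $H \subset L^2(\mathcal{A},\Sigma,\mathbb{P})$. I would therefore compute the variance of this one random variable in two ways. On one hand, applying the definition of the Gaussian field to the element $\phi+\psi \in B^\ast$ and using that the pairing $(\phi,\psi)\mapsto \psi(T\phi)$ is bilinear and symmetric (the latter by positive symmetry of $T$, \cref{def:SymOperator}),
\[
\mathrm{var}\big(\xi(\phi+\psi)\big) = (\phi+\psi)\big(T(\phi+\psi)\big) = \phi(T\phi) + \psi(T\psi) + 2\,\psi(T\phi).
\]
On the other hand, by the elementary variance-of-a-sum identity for square-integrable random variables together with the linearity of $\xi$,
\[
\mathrm{var}\big(\xi(\phi)+\xi(\psi)\big) = \mathrm{var}(\xi(\phi)) + \mathrm{var}(\xi(\psi)) + 2\,\mathrm{cov}(\xi(\psi),\xi(\phi)).
\]

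Equating the two expressions and substituting $\mathrm{var}(\xi(\phi)) = \phi(T\phi)$ and $\mathrm{var}(\xi(\psi)) = \psi(T\psi)$, the diagonal terms cancel and I obtain $\mathrm{cov}(\xi(\psi),\xi(\phi)) = \psi(T\phi) = \psi T \phi$, which is the claim. There is no real obstacle here: the only points requiring care are that linearity of $\xi$ is used at the level of $L^2$-valued random variables (so that $\xi(\phi+\psi)$ and $\xi(\phi)+\xi(\psi)$ are genuinely the same object and hence have equal variance), and that the symmetry of $T$ is invoked to collapse the cross terms $\phi(T\psi)$ and $\psi(T\phi)$. Both are immediate from the standing hypotheses, so the polarisation argument goes through directly.
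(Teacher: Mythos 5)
Your proposal is correct and follows essentially the same route as the paper's proof: a polarisation argument computing the second moment/variance of $\xi(\phi+\psi)$ in two ways, using the linearity of $\xi$, the definition of the Gaussian field applied to $\phi+\psi$, and the symmetry of $T$ to collapse the cross terms. The only cosmetic difference is that the paper first reduces to $u=0$ by shift-invariance of covariances, whereas you carry the mean along explicitly via the variance formulation — the substance is identical.
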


\begin{proof}
As covariances are invariant under shifts, without loss of generality we may assume $u=0$. We have
\begin{align*}
(\psi+\phi)T(\psi+\phi) &= \mathrm{cov}(\xi(\psi+\phi),\xi(\psi+\phi))
= \mathbb{E}[\xi(\psi+\phi)\xi(\psi+\phi)]\\
&=\mathbb{E}[\xi(\psi)\xi(\psi)+2\xi(\psi)\xi(\phi)+\xi(\phi)\xi(\phi)]\\
&=\psi T \psi +2 \mathrm{cov}(\xi(\psi),\xi(\phi)) + \phi T \phi
\end{align*}
It follows that $\mathrm{cov}(\xi(\psi),\xi(\phi)) = \psi T \phi$.
\end{proof}

\subsection{Conditional expectation and variance}\label{app:CondExpandVar}

Let $\xi \sim \mathcal{N}(u,T)$ be a Gaussian field with covariance operator $T$ and let $\gls{phi},\phi_1,\ldots,\phi_m \in B^\ast$. Let $\gls{Phi} = (\phi_1,\ldots,\phi_m)$ and denote $\xi(\Phi) = (\xi(\phi_1),\ldots,\xi(\phi_m))$, $\Phi(u) = (\phi_1(u),\ldots,\phi_m(u))$, $\gls{Theta} = (\phi_i T \phi_j)_{i,j=1}^m \in \R^{m \times m}$, $\Theta_0 = (\phi T \phi_j)_{j=1}^m \in \R^{m}$, $\Theta_0^0 = \phi T \phi$.
Using \cref{lem:CovLemma}, the joint distribution of $(\xi(\phi),\xi(\Phi)) \colon \mathcal{A} \to \R^{m+1}$ is given as
\[
\begin{pmatrix}
\xi(\phi)\\
\xi(\Phi)
\end{pmatrix}
\sim \mathcal{N} \left( \begin{pmatrix}
\phi (u)\\
\Phi (u)
\end{pmatrix} , \begin{pmatrix}
\Theta_0^0 & \Theta_0^\top\\
\Theta_0 & \Theta
\end{pmatrix} \right).
\]
We have $\xi(\Phi) -  \Phi(u) \in  \mathrm{range}(\Theta)$ almost surely \cite[Prop.~2.7]{Eaton2007}. Here $\mathrm{range}(\Theta)$ denotes the span of the columns of $\Theta$. Let $y \in \Phi(u) + \mathrm{range}(\Theta)$. Let the expression $\Theta^\dagger$ denote the Penrose pseudo-inverse of $\Theta$.
Using $\Theta_0^\top = \Theta_0^\top \Theta^\dagger \Theta$ \cite[Prop.2.16]{Eaton2007}, the two linear systems of equations
\begin{equation}\label{eq:ConditionalMeanLinSys}
	\Theta z = y-\Phi(u) \quad  \text{and} \quad \Theta Z = \Theta_0,
\end{equation}
are solvable.

\begin{proposition}[{{{\cite[Prop.~3.13]{Eaton2007}}}}]\label{prop:CondDistrGeneral}
The conditional distribution of $\xi(\phi)$ given $\xi(\Phi)=y$ is given as
\[
\xi(\phi) | \xi(\Phi)=y \sim \mathcal{N}(
\phi(u) + \Theta_0^\top \Theta^\dagger (y-\Phi(u)),
\Theta_0^0 - \Theta_0^\top \Theta^\dagger \Theta_0
).
\]
\end{proposition}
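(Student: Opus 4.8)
The plan is to reduce the statement to a classical fact about conditioning finite-dimensional Gaussian vectors, since the only role played by the Gaussian field here is to guarantee that the relevant finite marginal is multivariate normal. First I would invoke the defining property of $\xi \sim \mathcal{N}(u,T)$ together with \cref{lem:CovLemma} to record that the $\R^{m+1}$-valued random variable $(\xi(\phi),\xi(\Phi))$ is jointly Gaussian with mean $(\phi(u),\Phi(u))$ and the block covariance matrix displayed above the statement. From this point on the argument is entirely finite-dimensional and the Banach-space nature of $B$ plays no further role.

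The core of the proof is an explicit Gaussian decomposition that sidesteps the possible singularity of $\Theta$. I would introduce the residual
\begin{equation*}
  R := \xi(\phi) - \phi(u) - \Theta_0^\top \Theta^{\dagger}\bigl(\xi(\Phi)-\Phi(u)\bigr),
\end{equation*}
which is a scalar, centred Gaussian random variable as a linear image of the Gaussian vector $(\xi(\phi),\xi(\Phi))$. A short computation using $\mathrm{cov}(\xi(\Phi),\xi(\Phi))=\Theta$, $\mathrm{cov}(\xi(\phi),\xi(\Phi))=\Theta_0^\top$, the symmetry of $\Theta^{\dagger}$, and the Moore--Penrose identity $\Theta^{\dagger}\Theta\Theta^{\dagger}=\Theta^{\dagger}$ then gives $\mathrm{var}(R)=\Theta_0^0-\Theta_0^\top\Theta^{\dagger}\Theta_0$, the claimed conditional variance.

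Next I would establish independence of $R$ from the conditioning data. Computing the covariance yields
\begin{equation*}
  \mathrm{cov}\bigl(R,\xi(\Phi)\bigr)=\Theta_0^\top-\Theta_0^\top\Theta^{\dagger}\Theta,
\end{equation*}
and this vanishes precisely by the identity $\Theta_0^\top=\Theta_0^\top\Theta^{\dagger}\Theta$ recorded just above the statement (which encodes $\Theta_0\in\mathrm{range}(\Theta)$). Since $R$ and $\xi(\Phi)$ are jointly Gaussian, being uncorrelated makes them independent. Writing $\xi(\phi)=\phi(u)+\Theta_0^\top\Theta^{\dagger}(\xi(\Phi)-\Phi(u))+R$ and conditioning on $\xi(\Phi)=y$, the affine term becomes the deterministic value $\phi(u)+\Theta_0^\top\Theta^{\dagger}(y-\Phi(u))$ while the law of $R$ is unchanged by independence; this yields the asserted conditional distribution. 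The admissibility $y\in\Phi(u)+\mathrm{range}(\Theta)$, under which the conditioning event is supported correctly and the systems \eqref{eq:ConditionalMeanLinSys} are solvable, has already been arranged in the setup.

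The main obstacle is exactly the degeneracy of $\Theta$: the textbook conditioning formula uses $\Theta^{-1}$, and the whole delicacy is to verify that the replacement by $\Theta^{\dagger}$ is legitimate. This hinges on the two pseudo-inverse identities $\Theta^{\dagger}\Theta\Theta^{\dagger}=\Theta^{\dagger}$ and $\Theta_0^\top\Theta^{\dagger}\Theta=\Theta_0^\top$, the latter being the non-trivial one since it demands that $\Theta_0$ lie in the range of $\Theta$; this range condition is automatic for the off-diagonal block of a positive semidefinite covariance matrix and is the key structural input. Everything else reduces to routine bilinear algebra.
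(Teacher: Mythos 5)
Your argument is correct. The paper itself gives no proof of this statement---\cref{prop:CondDistrGeneral} is imported verbatim from \cite[Prop.~3.13]{Eaton2007}---and your residual decomposition $R=\xi(\phi)-\phi(u)-\Theta_0^\top\Theta^{\dagger}\bigl(\xi(\Phi)-\Phi(u)\bigr)$, with zero correlation upgraded to independence by joint Gaussianity, is exactly the standard proof of that cited result; in particular you correctly isolate the only non-routine input, $\Theta_0^\top=\Theta_0^\top\Theta^{\dagger}\Theta$ (equivalently $\Theta_0\in\mathrm{range}(\Theta)$, forced by positive semidefiniteness of the joint covariance), which is precisely the identity the paper records from \cite[Prop.~2.16]{Eaton2007} immediately before the statement. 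Your variance computation via $\Theta^{\dagger}\Theta\Theta^{\dagger}=\Theta^{\dagger}$ and the symmetry of $\Theta^{\dagger}$ is sound, and the independence decomposition is also the right way to give rigorous meaning to conditioning on the null event $\xi(\Phi)=y$ for admissible $y\in\Phi(u)+\mathrm{range}(\Theta)$.
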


\begin{remark}\label{rem:AnySolution}
The expressions $\Theta^\dagger (y-\Phi(u))$ and $\Theta^\dagger \Theta_0$ denote the (column-wise) least square solutions to \eqref{eq:ConditionalMeanLinSys}. However, since $\mathrm{null}(\Theta) \subseteq \mathrm{null}(\Theta_0^\top)$ \cite[Prop.~2.16]{Eaton2007} any solution to \eqref{eq:ConditionalMeanLinSys} will yield the same conditional distribution.	
\end{remark}

\begin{remark}
As by the existence result (\cref{thm:existenceGaussianField}), the function $\phi \mapsto \xi(\phi) | \xi(\Phi)=y$ can be interpreted as a Gaussian field with $\overline{\xi} \sim \mathcal{N}(\overline{u},\overline{T})$ with mean
\[
\overline{u} = u+ (T \Phi)^\top \Theta^\dagger(y-\Phi(u)) \in B, \text{ with } T\Phi = (T\phi_1,\ldots,T\phi_m)
\]
and covariance given by the positive symmetric operator $\overline{T} \colon B^\ast \to B$ (in the sense of \cref{def:SymOperator})
\[
\overline{T} = T - (T \Phi)^\top \Theta^\dagger(T\Phi).
\]
For an interpretation of $\overline{\xi}$ as an orthogonal projection of $\xi$ and a measure theoretic discussion, we refer to \cite{OwhadiScovel2019}.
\end{remark}

The following statements are helpful to characterize conditional means of Gaussian fields by an extremization principle.

\begin{theorem}[{{{\cite[Thm.~12.5]{OwhadiScovel2019}}}}]\label{thm:extrLinInd}
Let $\phi_1,\ldots,\phi_m \in B^\ast$ be linearly independent. Define $\Theta \in \R^{m \times m}$ by it elements $\Theta_{i,j} = \phi_i (Q \phi_j)$. Denote $\Phi=\begin{pmatrix}
	 \phi_1, \ldots, \phi_m \end{pmatrix}^\top \in (B^\ast)^m$ and $Q \Phi = \begin{pmatrix}
Q \phi_1, \ldots, Q\phi_m \end{pmatrix}^\top \in B^m$.
Then $\Theta$ is invertible and for any $y \in \R^m$
\[
\Psi = y^\top \Theta^{-1} Q\Phi = \sum_{i,j=1}^{m} y_i (\Theta^{-1})_{i,j} Q \phi_j
\]
is the minimizer of the convex optimization problem
\[
\Psi = \arg \min_{\{\Psi \in B \, | \, \Phi(\Psi) = y\}} \| \Psi \|_B.
\]
\end{theorem}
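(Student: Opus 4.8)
The plan is to exploit the fact that the quadratic norm turns $B$ into a Hilbert space in which $Q$ acts as the Riesz isometry, after which the statement reduces to an elementary orthogonal-projection argument together with a Gram-matrix computation. First I would equip $B$ with the bilinear form $\langle u,v\rangle := (Q^{-1}u)(v)$ for $u,v\in B$. Its symmetry follows from the symmetry of $Q$: writing $u=Q\phi$, $v=Q\psi$ one has $(Q^{-1}u)(v)=\phi(Q\psi)=\psi(Q\phi)=(Q^{-1}v)(u)$. Positive-definiteness is exactly the quadratic-norm property, since $\langle u,u\rangle=(Q^{-1}u)(u)=\|u\|_B>0$ for $u\neq 0$. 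Thus $(B,\langle\cdot,\cdot\rangle)$ is a real Hilbert space, and the defining identity gives the Riesz-type relation $\phi(u)=\langle Q\phi,u\rangle$ for all $\phi\in B^\ast$, $u\in B$; in particular $Q\phi$ represents $\phi$.

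With this identification the two assertions become standard. For the invertibility of $\Theta$, I note that $\Theta_{ij}=\phi_i(Q\phi_j)=\langle Q\phi_i,Q\phi_j\rangle$, so $\Theta$ is precisely the Gram matrix of the family $Q\phi_1,\ldots,Q\phi_m$. Since $Q$ is a bijection, it is injective, so the assumed linear independence of $\phi_1,\ldots,\phi_m$ transfers to $Q\phi_1,\ldots,Q\phi_m$; the Gram matrix of linearly independent vectors in an inner-product space is positive definite, hence invertible.

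Next I would check that the proposed element, written as $\Psi=\sum_{j}c_j\,Q\phi_j$ with $c=\Theta^{-1}y$, is feasible: applying $\phi_k$ and using that $\Theta$ is symmetric gives $\phi_k(\Psi)=\sum_j c_j\Theta_{kj}=(\Theta c)_k=y_k$, so the linear constraints $\Phi(\Psi)=y$ hold. For optimality I would take any competitor $\tilde\Psi$ with $\Phi(\tilde\Psi)=y$ and set $h:=\tilde\Psi-\Psi$. Then $\langle Q\phi_i,h\rangle=\phi_i(h)=y_i-y_i=0$ for every $i$, so $h$ is orthogonal to $W:=\mathrm{span}\{Q\phi_1,\ldots,Q\phi_m\}$, while $\Psi\in W$. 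The Pythagorean identity then yields $\|\tilde\Psi\|_B=\langle\Psi+h,\Psi+h\rangle=\|\Psi\|_B+\langle h,h\rangle\ge\|\Psi\|_B$, with equality if and only if $h=0$, so $\Psi$ is the unique minimizer.

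The bulk of this is routine once the Hilbert-space reinterpretation is set up; the only point deserving care is verifying that $\langle\cdot,\cdot\rangle$ is a genuine inner product—that symmetry and positivity of $Q$ transfer correctly—and that the Riesz identity $\phi(u)=\langle Q\phi,u\rangle$ holds, since everything downstream (the Gram-matrix reading of $\Theta$ and the orthogonality $h\perp W$) rests on it. I do not anticipate a real obstacle: in particular, existence of a minimizer need not be argued abstractly, because the candidate is exhibited explicitly and its optimality is established directly by the orthogonal decomposition.
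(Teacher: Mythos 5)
Your proof is correct. Note, though, that there is no in-paper proof to compare against: the paper imports \cref{thm:extrLinInd} verbatim from \cite[Thm.~12.5]{OwhadiScovel2019} and reserves its own work for the weakened version (\cref{thm:Extremizer}), which drops linear independence and is reduced to \cref{thm:extrLinInd} by passing to a basis of $\mathrm{span}\{\phi_j\}_{j=1}^m$. What you supply is therefore a self-contained substitute for the citation, and your route is the natural one: re-read the quadratic norm as the inner product $\langle u,v\rangle=(Q^{-1}u)(v)$ (symmetry and positivity transfer from $Q$ exactly as you say), so that $Q\phi$ is the Riesz representer of $\phi$, $\Theta$ becomes the Gram matrix of $Q\phi_1,\ldots,Q\phi_m$ (positive definite since injectivity of $Q$ transfers linear independence), feasibility of the candidate follows from symmetry of $\Theta$ (needed because the stated formula contracts $y$ against the first index of $\Theta^{-1}$, and $(\Theta^{-1}y)_j=\sum_i y_i(\Theta^{-1})_{ij}$ requires $\Theta^\top=\Theta$), and optimality follows from the orthogonal decomposition $\tilde\Psi=\Psi+h$ with $h\perp\mathrm{span}\{Q\phi_j\}_{j=1}^m$. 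As you observe, exhibiting the minimizer explicitly means neither completeness of $B$ nor the Hilbert projection theorem is needed — a small economy over the abstract existence arguments the paper invokes elsewhere (cf.\ \eqref{eq:MinProblems}). Two cosmetic remarks: the paper's convention literally sets $\|u\|_B=(Q^{-1}u)(u)$, i.e.\ what would usually be written as the squared norm, and your Pythagorean step follows that convention consistently (minimizing the form and the norm are equivalent in any case); and the uniqueness you obtain from the equality case $h=0$ is slightly more than the theorem statement asserts, but it is precisely what the paper uses downstream (uniqueness of the minimizers in \cref{thm:Extremizer} and \cref{rem:GPisMinProblem}), so it is worth keeping explicit.
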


We weaken the assumptions of \cref{thm:extrLinInd} slightly.

\begin{theorem}\label{thm:Extremizer}
Let $\phi_1,\ldots,\phi_m \in B^\ast$. Define $\Theta \in \R^{m \times m}$ by $\Theta_{i,j} = \phi_i (Q \phi_j)$. Denote $\Phi=\begin{pmatrix}
	\phi_1, \ldots, \phi_m \end{pmatrix}^\top \in (B^\ast)^m$ and $Q \Phi = \begin{pmatrix}
	Q \phi_1, \ldots, Q\phi_m \end{pmatrix}^\top \in B^m$.
For any $y \in \mathrm{range}(\Phi \colon B \to \R^m)$
\[
\Psi = y^\top \Theta^\dagger Q\Phi = \sum_{i,j=1}^{m} y_i (\Theta^\dagger)_{i,j} Q \phi_j
\]
is the minimizer of the convex optimization problem
\begin{equation}\label{eq:PsiAsMin}
\Psi = \arg \min_{\{\Psi \in B \, | \, \Phi(\Psi) = y\}} \| \Psi \|_B.
\end{equation}
\end{theorem}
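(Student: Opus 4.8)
The plan is to reduce the minimisation \eqref{eq:PsiAsMin} to a minimum-norm problem on the dual side via the isometry induced by $Q$, solve it by the standard Hilbert-space projection argument, and then check that the pseudo-inverse formula reproduces that solution. First I would exploit the quadratic-norm structure: since $Q \colon B^\ast \to B$ is a positive symmetric bijection, the bilinear form $\langle \phi, \psi \rangle_\ast := \phi(Q\psi)$ is an inner product on $B^\ast$, and $Q$ is an isometric isomorphism from $(B^\ast, \|\cdot\|_\ast)$ onto $(B, \|\cdot\|_B)$; indeed $\|Q\phi\|_B^2 = (Q^{-1}Q\phi)(Q\phi) = \phi(Q\phi) = \|\phi\|_\ast^2$. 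Because $Q$ is onto, every $\Psi \in B$ has the form $\Psi = Q\eta$ with $\eta \in B^\ast$, and then $\phi_i(\Psi) = \langle \phi_i, \eta\rangle_\ast$ while $\|\Psi\|_B = \|\eta\|_\ast$. Hence \eqref{eq:PsiAsMin} is equivalent, under $\Psi = Q\eta$, to minimising $\|\eta\|_\ast$ over $\eta \in B^\ast$ subject to $\langle \phi_i, \eta\rangle_\ast = y_i$ for $i = 1,\dots,m$. Under this identification $\Theta_{ij} = \phi_i(Q\phi_j) = \langle \phi_i, \phi_j\rangle_\ast$ is exactly the Gram matrix of $\phi_1,\dots,\phi_m$.

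Next I would solve the transformed problem. Writing $W = \operatorname{span}\{\phi_1,\dots,\phi_m\}$, any admissible $\eta$ splits orthogonally as $\eta = \eta_W + \eta_\perp$ with $\eta_W \in W$; the component $\eta_\perp$ leaves the constraints unchanged but only increases $\|\eta\|_\ast$, so the minimiser lies in $W$. Writing $\eta = \sum_j c_j \phi_j$, the constraints become $\Theta c = y$, and solvability is precisely the hypothesis $y \in \operatorname{range}(\Phi)$: the map $\Phi \circ Q \colon B^\ast \to \R^m$ has the same range as $\Phi$ (as $Q$ is surjective), and $(\Phi \circ Q)(\sum_j c_j \phi_j) = \Theta c$, so $\operatorname{range}(\Phi) = \operatorname{range}(\Theta)$ and thus $y \in \operatorname{range}(\Theta)$. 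The crucial observation — and the heart of the argument once independence is dropped — is that $\operatorname{null}(\Theta) = \{c : \sum_j c_j\phi_j = 0\}$, which follows from $c^\top \Theta c = \|\sum_j c_j\phi_j\|_\ast^2$. Consequently the element $\eta := \sum_j c_j \phi_j$ is independent of the particular solution $c$ of $\Theta c = y$, so I may take $c = \Theta^\dagger y$, which yields the (unique) minimum-norm $\eta$ in $W$.

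Finally I would translate back. With $c = \Theta^\dagger y$ and $\Theta^\dagger = (\Theta^\dagger)^\top$, the candidate is $\Psi = Q\eta = \sum_j (\Theta^\dagger y)_j Q\phi_j = \sum_{i,j} y_i(\Theta^\dagger)_{ij} Q\phi_j = y^\top \Theta^\dagger Q\Phi$, matching the claimed formula. Its feasibility follows from $\phi_i(\Psi) = (\Theta\Theta^\dagger y)_i = y_i$, using $y \in \operatorname{range}(\Theta)$, and its optimality is immediate since the isometry $\Psi = Q\eta$ carries the minimiser of the $\eta$-problem bijectively and norm-preservingly to the minimiser of \eqref{eq:PsiAsMin}. (Alternatively, one could reduce to a maximal linearly independent subset of the $\phi_i$ and invoke \cref{thm:extrLinInd}, but the bookkeeping for the consistency of the redundant constraints and the coincidence with $\Theta^\dagger$ is less transparent than the direct projection argument above.)

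The main obstacle is the degeneracy caused by dropping the linear-independence hypothesis of \cref{thm:extrLinInd}: one must verify both that $\Theta c = y$ is the correct solvability condition and that the resulting $\Psi$ is unambiguous even though $\Theta$ is singular and $c$ non-unique. Both issues are resolved by the identity $\operatorname{null}(\Theta) = \{c : \sum_j c_j \phi_j = 0\}$, which guarantees that $\Theta^\dagger$ selects a legitimate coefficient vector producing the genuine minimum-norm element.
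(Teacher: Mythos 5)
Your proof is correct, but it takes a genuinely different route from the paper. The paper reduces to the linearly independent case: it extracts a basis $\{\tilde\phi_j\}$ of $\mathrm{span}\{\phi_j\}$, writes $\Phi = A\tilde\Phi$ with an injective matrix $A$, invokes \cref{thm:extrLinInd} (the Owhadi--Scovel result with invertible $\tilde\Theta$) for the reduced problem, and then needs two lemmas to untangle the redundancy: \cref{lem:KernelProperty}, stating $\ker(\Theta)\subseteq\ker(z\mapsto z^\top Q\Phi)$ and proved via a positive semi-definite block-matrix computation, and \cref{lem:PsiCoincide}, showing that $\Theta z = y$ is solvable and that every solution yields the same $\Psi$. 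You instead argue directly and self-containedly: transporting the problem through the isometry $Q$ to the dual inner product $\langle\phi,\psi\rangle_\ast=\phi(Q\psi)$, you recognise $\Theta$ as the Gram matrix of the $\phi_j$, run the classical projection/representer argument onto $W=\mathrm{span}\{\phi_j\}$, and get everything from the single identity $c^\top\Theta c=\bigl\|\sum_j c_j\phi_j\bigr\|_\ast^2$, which yields $\mathrm{null}(\Theta)=\{c:\sum_j c_j\phi_j=0\}$ --- a sharpening (equality rather than inclusion, using injectivity of $Q$) of \cref{lem:KernelProperty} that simultaneously delivers the solvability condition $\mathrm{range}(\Phi)=\mathrm{range}(\Theta)$ and the well-definedness of $\Psi$ under non-unique solutions of $\Theta c=y$. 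Your approach buys a unified argument that never needs the independent case, the matrix $A$, or the bookkeeping of \cref{lem:PsiCoincide}; it does lean on $B$ being a Hilbert space under the quadratic norm, but the paper concedes exactly this, and the paper's own proof also uses it (both in citing the Hilbert projection theorem and at the end of \cref{lem:KernelProperty}). The paper's route, in exchange, stays closer to the cited framework of \cite{OwhadiScovel2019} and isolates the kernel property in a form (proved à la Eaton, without the Gram interpretation) that it reuses verbatim in \cref{sec:ApplyPropCondDist}. One cosmetic point: you implicitly read the paper's convention $\|u\|_B=(Q^{-1}u)(u)$ as defining the \emph{squared} norm when you write $\|Q\phi\|_B^2=\phi(Q\phi)$; since minimising $\|\Psi\|_B$ and its square is the same, this does not affect the argument.
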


\begin{proof}
	\mod{As $\Phi \colon B \to \R^m$ is a continuous linear operator, the set $\{\Psi \in B \, | \, \Phi(\Psi) = y\}$ is closed in $B$. Further, it is convex and non-empty. By the Hilbert projection theorem \cite[§12.3]{Rudin1991} the minimum in \eqref{eq:PsiAsMin} exists and is unique.}
	In preparation of the argument, we first prove the following \nameCref{lem:KernelProperty}\mod{.}
	
	\begin{lemma}\label{lem:KernelProperty}
	We have
	\[
	\ker (\Theta\colon \R^m \to \R^m) \subseteq \ker(\R^m \to B, x \mapsto x^\top Q\Phi).
	\]
	\end{lemma}

	\begin{proof}
		The proof is inspired by \cite[Prop.~2.16]{Eaton2007}. 
		Let $\phi \in B^\ast$. As the bijection $Q$ is positive symmetric, the following matrix is symmetric, positive semi-definite
		\[
		\Sigma = \begin{pmatrix}
			\phi Q \phi & \phi (Q \Phi)^\top\\
			\Phi^\top Q \phi & \Phi^\top Q \Phi
		\end{pmatrix} \in \R^{(m+1) \times (m +1)}
		\]
	Therefore, for any $x \in \ker \Theta = \ker \Phi^\top Q \Phi$, $\alpha,\beta \in \R$ we have
	\begin{align*}
		0 &\le \begin{pmatrix}
			\beta & \alpha x^\top
		\end{pmatrix} \Sigma \begin{pmatrix}
		\beta \\ \alpha x
		\end{pmatrix}
		= \beta^2 \phi Q \phi + 2 \alpha \beta \phi (Q\Phi)^\top x
	\end{align*}
	As this holds for all $\alpha, \beta \in \R$ we conclude $\phi ((Q \Phi)^\top x)=0$. 
	Since $B$ is a Hilbert space and $\phi ((Q \Phi)^\top x)=0$ holds for all $\phi \in B^\ast$ we conclude $(Q \Phi)^\top x =0$.
	\end{proof}

	Let $\{\tilde \phi_j\}_{j=1}^{\tilde m} \subset B^\ast$ be a basis for the linear span $\mathrm{span}\{\phi_j\}_{j=1}^{m}$ ($\tilde m \le m$).
	The basis elements define the vector $\tilde{\Phi} = \begin{pmatrix}
		\tilde \phi_1, & \ldots, & \tilde \phi_{\tilde{m}}
	\end{pmatrix}^\top \in (B^\ast)^{\tilde m}$.
	By linear independence of $\tilde{\phi}_j$, for $k=1,\ldots, m$ there exist unique $\alpha_{kj} \in \R$ ($j=1,\ldots, \tilde m$) with
	\[
	\phi_k = \sum_{j=1}^{\tilde m} \alpha_{kj} \tilde \phi_j.
	\]
	This defines a unique matrix $A = (\alpha_{ij}) \in \R^{m \times \tilde m}$ with $\Phi = A \tilde \Phi$ with linearly independent columns.
	Moreover, the matrix $\tilde \Theta \in \R^{\tilde m \times \tilde m}$ defined by $\tilde \Theta_{i,j} = \tilde \phi_i (Q(\tilde \phi_j))$ is invertible.
	
%
	
Since $\tilde{\Phi} \colon B \to \R^{\tilde m}$ is surjective,
\[
\mathrm{range}(\Phi \colon B \to \R^{m})
=\mathrm{range}(A \circ \tilde \Phi \colon B \to \R^{m})
= \mathrm{range}(A \colon \R^{\tilde m} \to \R^m).
\]
Let $y \in 	\mathrm{range}(\Phi) = \mathrm{range}(A)$.
Define $\tilde y = A^\dagger y$ and $\tilde z = \tilde{\Theta}^{-1} \tilde y$.
As $A$ has linearly independent columns, it is an isomorphism onto $\mathrm{range}(A)$ such that for any $\Psi \in B$ we have
\begin{align*}
(A \circ \tilde \Phi)(\Psi) = \Phi(\Psi) = y = A \tilde y \\
\iff \tilde \Phi(\Psi) = \tilde y.
\end{align*}
Thus, the following minima coincide
\[
 \arg \min_{\{\Psi \in B \, | \, \Phi(\Psi) = y\}} \| \Psi \|_B
 =\arg \min_{\{ \Psi \in B \, | \, \tilde \Phi(\Psi) = \tilde y\}} \| \Psi \|_B.
\]
\mod{Notice that the existence and uniqueness of the minima are guaranteed by the Hilbert projection theorem \cite[§12.3]{Rudin1991}.}
By \cref{thm:extrLinInd} this minimum coincides with $\tilde \Psi = \tilde z^\top Q \tilde \Phi$.
To complete the proof of the theorem, it remains to prove the following \nameCref{lem:PsiCoincide}.

\begin{lemma}\label{lem:PsiCoincide}
Consider the linear system $\Theta z = y$ for $z \in \R^m$. Then $\Theta z = y$ is solvable and for any solution $z$
\[
\Psi = z^\top Q \Phi \quad \text{and}\quad  \tilde \Psi = \tilde z^\top Q \tilde \Phi
\]
coincide.
\end{lemma}
	
	\begin{proof}
		 Let $\tilde z$ be the solution to $\tilde \Theta \tilde z = \tilde y$ and set $\tilde \Psi = \tilde z^\top Q \tilde \Phi$. Using linearity of $Q$, we have
	\begin{align*}
	\tilde \Psi = \tilde z^\top Q \tilde \Phi
	= \tilde z^\top Q A^\dagger \Phi
	= ((A^\dagger)^\top \tilde z)^\top Q \Phi
	= \overline z^\top Q \Phi
	\end{align*}
	with $\overline z := (A^\dagger)^\top \tilde z$. We have
	\[
	A^\dagger \Theta \overline z = A^\dagger \Theta (A^\dagger)^\top \tilde z
	= \tilde \Theta \tilde z
	= \tilde y 
	=A^\dagger y.
	\]
	As $A^\dagger A=\mathrm{Id}_{\tilde m}$, the restriction $A^\dagger|_{\mathrm{range}(A)} \colon \mathrm{range}(A) \to \R^{\tilde m}$ is an isomorphism. Therefore, as $y \in \mathrm{range}(A)$ it follows that $\overline z = (A^\dagger)^\top \tilde z$ solves the linear system
	\begin{equation}\label{eq:ThetaSolinLem}
	\Theta z = y.
	\end{equation}
	For any other solution $z \in \R^m$ of \eqref{eq:ThetaSolinLem} it holds that
	\[
	z - \overline{z} \in \ker \Theta \subseteq \ker( (\Theta\Phi)^\top \colon \R^m \to \R),
	\]
	where the inclusion holds by \cref{lem:KernelProperty}.
	Therefore, $\tilde{\Psi}  = \tilde z^\top Q \Phi = \overline{z}^\top Q \Phi = z^\top Q \Phi = \Psi$.
	\end{proof}
	
This completes the proof of \cref{thm:Extremizer}.

\end{proof}

\subsection{Applicability of \cref{prop:CondDistrGeneral} in \mod{the setting of} \cref{sec:MLSetting}}\label{sec:ApplyPropCondDist}

\mod{
\Cref{prop:CondDistrGeneral} provides an expressions for the mean and covariance of conditional distributions of Gaussian fields. In case the prior has zero mean, \cref{prop:CondDistrGeneral} requires $y \in \mathrm{range}(\Theta)$. 
To apply these results in the setting of}
\cref{sec:LasCondGP}, we need to verify \mod{that for $y_b^{\mod{(}M\mod{)}}$ and $\Theta$ of \cref{sec:LasCondGP} it holds that} $y_b^{\mod{(}M\mod{)}} \in \mathrm{range}(\Theta)$.

\begin{proposition}\label{prop:CondDistPropLCheck}
Employing notation of \cref{sec:MLSetting}, \cref{assumption:MLSetupAssumption} implies $y_b^{\mod{(}M\mod{)}} \in \mathrm{range}(\Theta)$.
\end{proposition}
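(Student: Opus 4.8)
The plan is to show that $\mathrm{range}(\Theta)$ coincides with the range of the observation operator $\Phi_b^{(M)}\colon U \to \R^{(M+1)d+1}$, and then to read off the claim directly from \cref{assumption:MLSetupAssumption}, which supplies an element of $U$ that is mapped to $y_b^{(M)}$ by $\Phi_b^{(M)}$.

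First I would record that each component of $\Phi_b^{(M)}$ is a bounded linear functional on $U$, i.e.\ an element of $U^\ast$. This is exactly where the continuous embedding $U \hookrightarrow \mathcal{C}^2(\overline\Omega)$ and the four-times continuous differentiability of $K$ from \cref{assumption:MLSetupAssumption} enter: they guarantee that $\mathrm{EL}_{\hat x^{(j)}}$, $\mathrm{Mm}_{\overline x_b}$ and $\mathrm{ev}_{\overline x_b}$, originally defined on $\mathcal{C}^2(\overline\Omega)$, restrict to continuous functionals on $U$. I would then invoke the reproducing identity for $\mathcal{K}$. Since the inner product on the RKHS satisfies $\langle u,v\rangle_U = (\mathcal{K}^{-1}u)(v)$ (the quadratic-norm identification of $U$ with $U^\ast$ via the positive symmetric bijection $\mathcal{K}$), one gets $\langle \mathcal{K}\phi, \Psi\rangle_U = \phi(\Psi)$ for every $\phi \in U^\ast$ and $\Psi \in U$; equivalently this follows from $\mathcal{K}(\mathrm{ev}_x) = K(x,\cdot)$ together with the reproducing property, extended by linearity and continuity.

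Second, writing $\phi_1,\dots,\phi_m$ for the components of $\Phi_b^{(M)}$ with $m=(M+1)d+1$, I would prove the key identity $\mathrm{range}(\Theta) = \mathrm{range}(\Phi_b^{(M)})$. The inclusion $\subseteq$ is immediate, since the $j$th column of $\Theta$ equals $(\phi_i(\mathcal{K}\phi_j))_{i=1}^m = \Phi_b^{(M)}(\mathcal{K}\phi_j)$ and hence lies in the range of $\Phi_b^{(M)}$. For the reverse inclusion, given $\Psi \in U$ I would orthogonally decompose $\Psi = \Psi_0 + \Psi_1$ with $\Psi_0 \in \mathrm{span}\{\mathcal{K}\phi_1,\dots,\mathcal{K}\phi_m\}$ and $\Psi_1$ orthogonal to that finite-dimensional span; then $\phi_i(\Psi_1) = \langle \mathcal{K}\phi_i, \Psi_1\rangle_U = 0$ for every $i$, so writing $\Psi_0 = \sum_j z_j \mathcal{K}\phi_j$ yields $\Phi_b^{(M)}(\Psi) = \Phi_b^{(M)}(\Psi_0) = \Theta z \in \mathrm{range}(\Theta)$. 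I would emphasise that this argument uses no linear independence of the $\phi_i$, so it remains valid when $\Theta$ is singular — which is precisely the situation in which the pseudo-inverse is needed elsewhere.

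Finally, \cref{assumption:MLSetupAssumption} provides a Lagrangian $L$ in the intersection with $U$, that is $L \in U$ with $\Phi_b^{(M)}(L) = y_b^{(M)}$; therefore $y_b^{(M)} \in \mathrm{range}(\Phi_b^{(M)}) = \mathrm{range}(\Theta)$, which is the assertion. I expect the only genuine obstacle to be the inclusion $\mathrm{range}(\Phi_b^{(M)}) \subseteq \mathrm{range}(\Theta)$, and in particular justifying the orthogonal-projection step cleanly in the possibly degenerate case; everything else is bookkeeping about boundedness and the reproducing identity. As an alternative to the projection argument one could combine \cref{lem:KernelProperty} (giving $\ker\Theta \subseteq \ker((\mathcal{K}\Phi_b^{(M)})^\top)$) with a rank count, but the direct Hilbert-space projection appears to be the most transparent route.
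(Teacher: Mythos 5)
Your proof is correct, but it follows a genuinely different route from the paper's. You establish $\mathrm{range}(\Theta)=\mathrm{range}(\Phi_b^{(M)})$ directly in the Hilbert space $U$: the inclusion $\subseteq$ by reading the columns of $\Theta$ as $\Phi_b^{(M)}(\mathcal{K}\phi_j)$, and the reverse inclusion via the orthogonal decomposition $\Psi=\Psi_0+\Psi_1$ against the finite-dimensional span of the Riesz representers $\mathcal{K}\phi_1,\dots,\mathcal{K}\phi_m$, using the identity $\phi(\Psi)=\langle\mathcal{K}\phi,\Psi\rangle_U$ — which is indeed valid here, since $\mathcal{K}\phi$ is precisely the Riesz representer of $\phi$ by the reproducing property, and \cref{assumption:MLSetupAssumption} guarantees the components of $\Phi_b^{(M)}$ are bounded on $U$. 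The paper instead proceeds by finite-dimensional linear algebra: it picks a basis $\{\tilde\phi_j\}_{j=1}^{\tilde M}$ of $\mathrm{span}\{\phi_j\}$, factors $\Phi_b^{(M)}=A\tilde\Phi$ with $A$ of linearly independent columns, proves the factorisation $\Theta=A\tilde\Theta A^\top$ (\cref{lem:ThetaThetaTilde}), and deduces $\mathrm{range}(\Theta)=\mathrm{range}(A)\supseteq\mathrm{range}(\Phi_b^{(M)})$ from invertibility of $\tilde\Theta$ and surjectivity of $A^\top$; the final step from the assumption is then identical to yours. Your projection argument is more self-contained and handles the degenerate (rank-deficient) case with no case distinction or choice of basis — and it yields the slightly stronger range \emph{equality}, though only the inclusion is needed; the paper's factorisation buys consistency with the machinery already set up for \cref{thm:Extremizer} (the objects $\tilde\Phi$, $A$, $\tilde\Theta$ reappear there), so the two proofs share bookkeeping. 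Your closing remark that one could alternatively combine \cref{lem:KernelProperty} with a rank count points in the right direction, but the projection argument you give is cleaner and fully rigorous as stated.
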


\begin{proof}
Denote the components of $\Phi_b^{\mod{(}M\mod{)}}$ by $\phi_1,\ldots,\phi_{\overline{M}} \in U^\ast$, where $\overline{M} = Md+d+1$. 
Let $\{\tilde \phi_j\}_{j=1}^{\tilde M} \subset U^\ast$ be a basis for the linear span $\mathrm{span}\{\phi_j\}_{j=1}^{\overline{M}}$ ($\tilde M \le \overline{M}$).
The basis elements define the vector $\tilde{\Phi} = \begin{pmatrix}
\tilde \phi_1, & \ldots, & \tilde \phi_{\tilde{M}}
\end{pmatrix}^\top \in (U^\ast)^{\tilde M}$.
By the linear independence of $\tilde{\phi}_j$, for $k=1,\ldots, \overline M$ there exist unique $\alpha_{kj} \in \R$ ($j=1,\ldots, \tilde M$) with
\[
\phi_k = \sum_{j=1}^{\tilde M} \alpha_{kj} \tilde \phi_j.
\]
This defines a unique matrix $A = (\alpha_{ij}) \in \R^{\overline{M} \times \tilde M}$ with $\Phi_b^{\mod{(}M\mod{)}} = A \tilde \Phi$ with linearly independent columns.
Define $\tilde \Theta \in \R^{\tilde M \times \tilde M}$ by $\tilde \Theta_{i,j} = \tilde \phi_i (\mathcal{K}(\tilde \phi_j))$, \mod{where $\gls{MathcalKappa}$ is the quadratic identification of $U^\ast$ and $U$ induced by the kernel $K$, see \eqref{eq:MathcalKappa}.}

Recall that $\Theta_{i,j} = \phi_i (\mathcal{K}(\phi_j))$ defines $\Theta \in \R^{\overline{M}\times \overline{M}}$.
\mod{To complete the proof, we need the following \namecref{lem:ThetaThetaTilde}.}

\begin{lemma}\label{lem:ThetaThetaTilde}
We have
\[
\Theta = A \tilde \Theta A^\top.
\]
\end{lemma}

\begin{proof}
	Using linearity of $\mathcal{K}\colon U^\ast \to U$,
	\begin{align*}
\Theta_{i,j} &= \phi_i (\mathcal{K}(\phi_j))
= \sum_{k=1}^{\tilde M} \alpha_{ik} \tilde \phi_k \left(\mathcal K \left( \sum_{s=1}^{\tilde M} \alpha_{js} \tilde \phi_s \right) \right)\\
&= \sum_{k,s=1}^{\tilde M} \alpha_{ik} \alpha_{js} \tilde \phi_k ( \mathcal K ( \tilde \phi_s))
=\sum_{k,s=1}^{\tilde M} \alpha_{ik} \tilde{\Theta}_{k,s} \alpha_{js}.
	\end{align*}
\end{proof}

The matrix $\tilde \Theta$ is invertible by construction. (Indeed, we could have chosen $\tilde \phi_j$ such that $\tilde \Theta$ is the identity matrix.) Moreover, $A$ is injective such that $A^\top \colon \R^{\overline{M}} \to \R^{\tilde M}$ is surjective.
Therefore, by \cref{lem:ThetaThetaTilde}, $\mathrm{range}(\Theta) = \mathrm{range}(A)$.
Viewing $\Phi_b^{\mod{(}M\mod{)}}\colon U \to \R^{\overline{M}}$, $\tilde \Phi \colon U \to \R^{\tilde M}$, $A \colon \R^{\tilde M} \to \R^{\overline M}$ as linear maps,
\[
\mathrm{range}(\Phi_b^{\mod{(}M\mod{)}} \colon U \to \R^{\overline{M}})
=\mathrm{range}(A \circ \tilde \Phi \colon U \to \R^{\overline{M}})
\subset \mathrm{range}(A) = \mathrm{range}(\Theta).
\]

\Cref{assumption:MLSetupAssumption} implies $y_{b}^{\mod{(}M\mod{)}} \in \mathrm{range}(\Phi_b^{\mod{(}M\mod{)}})$. Therefore, $y_{b}^{\mod{(}M\mod{)}} \in \mathrm{range}(\Theta)$. \mod{This completes the proof of \cref{prop:CondDistPropLCheck}.}

\end{proof}

In the setting of discrete Lagrangians, an application of \cref{prop:CondDistrGeneral} is justified as by the following \nameCref{prop:CondDistPropLdCheck}.

\begin{proposition}\label{prop:CondDistPropLdCheck}
Employing notation of \cref{dec:GPLd}, \cref{assumption:MLSetupAssumptionLd} implies $y_b^{\mod{(}M\mod{)}} \in \mathrm{range}(\Theta)$.
\end{proposition}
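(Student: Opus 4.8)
The plan is to mirror the argument for the continuous case in \cref{prop:CondDistPropLCheck} essentially verbatim, since the only difference is the concrete form of the functionals assembled into $\Phi_b^{\mod{(}M\mod{)}}$. First I would write the $\overline{M} = Md + d + 1$ scalar components of the discrete functional $\Phi_b^{\mod{(}M\mod{)}}$ from \eqref{eq:PhibmDiscrete} as $\phi_1,\ldots,\phi_{\overline{M}} \in U^\ast$; these are the component functionals of $\mathrm{DEL}_{\hat{x}^{(j)}}$ ($1 \le j \le M$), of $\mathrm{Mm}^-_{\overline{x}_b}$, and the evaluation functional $\mathrm{ev}_{\overline{x}_b}$. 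By \cref{assumption:MLSetupAssumptionLd} the RKHS $U$ embeds continuously into $\mathcal{C}^1(\overline{\Omega})$, and since $\mathrm{DEL}$ and $\mathrm{Mm}^-$ involve only first-order derivatives while $\mathrm{ev}$ is evaluation, each $\phi_k$ is a bounded linear functional on $U$. This boundedness is what legitimises viewing the $\phi_k$ as elements of $U^\ast$ and applying the quadratic identification $\mathcal{K}$ of \eqref{eq:MathcalKappa}.

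Next I would select a basis $\{\tilde{\phi}_j\}_{j=1}^{\tilde{M}}$ of $\mathrm{span}\{\phi_j\}_{j=1}^{\overline{M}}$ (with $\tilde{M} \le \overline{M}$), collect it into $\tilde{\Phi} \in (U^\ast)^{\tilde{M}}$, and record the unique coefficient matrix $A \in \R^{\overline{M} \times \tilde{M}}$ with linearly independent columns satisfying $\Phi_b^{\mod{(}M\mod{)}} = A\tilde{\Phi}$. Defining $\tilde{\Theta}_{i,j} = \tilde{\phi}_i(\mathcal{K}(\tilde{\phi}_j))$, which is invertible by construction, the bilinearity of $(\phi,\psi) \mapsto \phi(\mathcal{K}\psi)$ yields the factorisation
\[
\Theta = A \tilde{\Theta} A^\top,
\]
exactly as in \cref{lem:ThetaThetaTilde}. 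Since $\tilde{\Theta}$ is invertible and $A$ is injective (so that $A^\top \colon \R^{\overline{M}} \to \R^{\tilde{M}}$ is surjective), this forces $\mathrm{range}(\Theta) = \mathrm{range}(A)$.

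Finally, viewing the operators as linear maps on $U$, the identity $\Phi_b^{\mod{(}M\mod{)}} = A \circ \tilde{\Phi}$ gives $\mathrm{range}(\Phi_b^{\mod{(}M\mod{)}}) \subseteq \mathrm{range}(A) = \mathrm{range}(\Theta)$, and \cref{assumption:MLSetupAssumptionLd} guarantees $y_b^{\mod{(}M\mod{)}} \in \mathrm{range}(\Phi_b^{\mod{(}M\mod{)}})$, whence $y_b^{\mod{(}M\mod{)}} \in \mathrm{range}(\Theta)$, as claimed. There is no genuine obstacle beyond the continuous case; the only point that must be verified rather than copied is the boundedness of the discrete functionals on $U$, which is precisely why \cref{assumption:MLSetupAssumptionLd} relaxes the hypotheses relative to \cref{assumption:MLSetupAssumption} — the loss of one order of differentiation in $\mathrm{DEL}$ compared with $\mathrm{EL}$ permits the weaker demands $U \hookrightarrow \mathcal{C}^1(\overline{\Omega})$ and $K$ twice (rather than four times) continuously differentiable.
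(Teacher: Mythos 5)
Your proposal is correct and takes essentially the same approach as the paper: the paper's proof of \cref{prop:CondDistPropLdCheck} consists of the single statement that it follows in complete analogy to \cref{prop:CondDistPropLCheck}, and your argument is precisely that analogy carried out, including the factorisation $\Theta = A\tilde{\Theta}A^\top$ and the chain $y_b^{(M)} \in \mathrm{range}(\Phi_b^{(M)}) \subseteq \mathrm{range}(A) = \mathrm{range}(\Theta)$. Your closing observation — that the weaker hypotheses (embedding $U \hookrightarrow \mathcal{C}^1(\overline{\Omega})$, $K$ twice continuously differentiable) suffice because $\mathrm{DEL}$ and $\mathrm{Mm}^-$ involve only first-order derivatives, so the component functionals remain bounded on $U$ — is the one point the paper leaves implicit, and it is accurate.
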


\begin{proof}
The proof follows in complete analogy to \mod{the proof of} \cref{prop:CondDistPropLCheck}.
\end{proof}

\section{Alternative regularisation}\label{app:SymplVolNormalise}

The following proposition justifies an alternative regularisation strategy. As it involves non-linear conditions, we prefer the regularisation strategy presented in the main body of the document. However, it is presented here for comparison with regularisation strategies for learning of Lagrangian densities using neural networks \cite{DLNNPDE}.

\begin{proposition}\label{prop:NormaliseLNonlinear}
	Let $\overline{x}_b = (x_b,\dot x_b) \in T\R^d \cong \R^d \times \R^d$ and $\mathring L$ a Lagrangian with $\frac{\p \mathring L}{\p \dot x \p \dot x}(\overline{x}_b)$ non-degenerate.
	Let $c_b \in \R$, $p_b \in \R^d$, $c_\omega >0$. There exists a Lagrangian $L$ such that $L$ is equivalent to $\mathring{L}$ and
	\begin{equation}\label{eq:normLCondNonlinear}
		L(\overline{x}_b)=c_b,
		\quad 
		\mathrm{Mm}(L)(\overline{x}_b) = \frac{\p L}{\p \dot x}(\overline{x}_b) = p_b,
		\quad
		N_\omega(L)(\overline{x}_b) = \left|\det\left( \frac{\p^2 L}{\p \dot x \p \dot x}(\overline{x}_b)\right)\right| = c_\omega.
	\end{equation}
	
\end{proposition}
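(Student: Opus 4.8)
The plan is to follow the same three-step gauge-normalisation strategy as in the proof of \cref{prop:NormaliseL}, writing the desired Lagrangian as $L = \rho \mathring L + \d_t F + c$ for a scalar $\rho \neq 0$, a linear function $F$, and a constant $c$, and exploiting the transformation rules of \cref{lem:TrafoGeomQuantities}. The essential observation is that the three normalisation conditions \eqref{eq:normLCondNonlinear} decouple in a triangular fashion, so the parameters can be determined one at a time.

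First I would record how the gauge transformation acts on the relevant quantities. Since $\d_t F(x,\dot x) = \dot x^\top \nabla F(x)$ contributes nothing to the second $\dot x$-derivatives and $c$ contributes nothing either, we have $\frac{\p^2 L}{\p \dot x \p \dot x}(\overline x_b) = \rho \frac{\p^2 \mathring L}{\p \dot x \p \dot x}(\overline x_b)$, hence $N_\omega(L)(\overline x_b) = |\rho|^d \, N_\omega(\mathring L)(\overline x_b)$. Writing $\mathring c_\omega := N_\omega(\mathring L)(\overline x_b)$, which is strictly positive by the non-degeneracy hypothesis on $\frac{\p^2 \mathring L}{\p \dot x \p \dot x}(\overline x_b)$, the volume condition $N_\omega(L)(\overline x_b) = c_\omega$ reduces to $|\rho|^d = c_\omega / \mathring c_\omega$. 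As $c_\omega > 0$, this pins down $|\rho|$ and I would take the positive real root $\rho = (c_\omega/\mathring c_\omega)^{1/d} > 0$; crucially, this step uses only $\mathring L$ and involves no choice of $F$ or $c$.

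With $\rho$ fixed, the momentum condition is treated next. Since $\frac{\p}{\p \dot x}(\d_t F) = \nabla F(x)$ is independent of $\dot x$, \cref{lem:TrafoGeomQuantities} gives $\mathrm{Mm}(L)(\overline x_b) = \rho \mathring p_b + \nabla F(x_b)$ with $\mathring p_b := \mathrm{Mm}(\mathring L)(\overline x_b)$. Choosing the linear function $F(x) = x^\top(p_b - \rho \mathring p_b)$ yields $\nabla F \equiv p_b - \rho \mathring p_b$, so $\mathrm{Mm}(L)(\overline x_b) = p_b$ as required; this depends only on $\rho$ and $F$, not on $c$. Finally, with $\rho$ and $F$ fixed, the value condition $L(\overline x_b) = \rho \mathring c_b + \dot x_b^\top \nabla F(x_b) + c = c_b$ (with $\mathring c_b := \mathring L(\overline x_b)$) is satisfied by setting $c = c_b - \rho \mathring c_b - \dot x_b^\top (p_b - \rho \mathring p_b)$. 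Since $\rho \neq 0$, the resulting $L$ is gauge-equivalent to $\mathring L$ by definition.

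There is no serious obstacle; the only step requiring care is the solvability of the volume condition, which is exactly where the hypotheses are used: non-degeneracy gives $\mathring c_\omega > 0$ and the assumption $c_\omega > 0$ then guarantees a real, nonzero $\rho$. The triangular dependence — the volume condition on $\rho$ alone, the momentum condition on $(\rho, F)$, and the value condition on $(\rho, F, c)$ — is what makes the one-parameter-at-a-time solution valid, exactly as in \cref{prop:NormaliseL}; the only structural difference is that here $\rho$ is determined by the determinant rather than by an Euler--Lagrange value, at the cost of the condition being non-linear in $L$.
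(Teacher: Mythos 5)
Your proposal is correct and follows essentially the same route as the paper: the paper's proof also writes $L = \rho \mathring{L} + \d_t F + c$ with $\rho = \sqrt[d]{\left|c_\omega/\mathring{c}_\omega\right|}$, $F(x) = x^\top(p_b - \rho\mathring{p}_b)$, and $c = c_b - \dot{x}_b^\top(p_b - \rho\mathring{p}_b) - \rho\mathring{c}_b$, which are exactly your choices. Your added explanation of the scaling $N_\omega(L)(\overline{x}_b) = |\rho|^d N_\omega(\mathring{L})(\overline{x}_b)$ and of the triangular dependence of the three conditions on $(\rho, F, c)$ is a correct elaboration of what the paper leaves implicit.
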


\begin{proof}
	Let $\mathring{c_b} = \mathring L(\overline{x}_b)$, $\mathring{p_b}=\mathrm{Mm}(\mathring L)(\overline{x}_b)$, $\mathring{c_\omega} = N_\omega(\mathring L)(\overline{x}_b)$.
	The quantity $\mathring c_\omega$ is not zero since $\frac{\p \mathring L}{\p \dot x \p \dot x}(\overline{x}_b)$ is non-degenerate. We set
	\[
	\rho = \sqrt[d]{\left|\frac{c_\omega}{\mathring{c_\omega}}\right|},
	\quad
	F(x) = x^\top (p_b- \rho \mathring{p_b}),
	\quad 
	c = c_b - \dot{x}_b^\top (p_b- \rho \mathring{p_b})-\rho \mathring{c_b}.
	\]
	Now the Lagrangian $L = \rho \mathring{L} + \d_t F + c$ is equivalent to $\mathring{L}$ and fulfils \eqref{eq:normLCond}.
\end{proof}

The condition $	N_\omega(L)(\overline{x}_b) = c_\omega > 0$ may be compared to the regularisation strategies for training Lagrangians modelled as neural networks in \cite{DLNNPDE}:
denoting observation data by $\hat x^{(j)} = (x^{(j)},\dot x^{(j)} , \ddot x^{(j)})$, in \cite{DLNNPDE} (transferred to our continuous ode setting) parametrises $L$ as a neural network and considers the minimisation of a loss function function $\ell = \ell_{\mathrm{data}} + \ell_{\mathrm{reg}}$ with data consistency term
\[
\ell_{\mathrm{data}} = \sum_j \| \mathrm{EL}(L)(\hat{x}^{(j)})\|^2
\]
and with regularisation term $\ell_{\mathrm{reg}}$ that maximises the regularity of the Lagrangian at data points $\hat x^{(j)} = (x^{(j)},\dot x^{(j)} , \ddot x^{(j)})$
\[
\ell_{\mathrm{reg}} = \sum  \left\| \left(\frac{\p^2 L}{\p \dot x \p \dot x}(x^{(j)},\dot x^{(j)}) \right)^{-1} \right\|.
\]

The corresponding statement for discrete Lagrangians is as follows.

\begin{proposition}\label{prop:NormaliseLdNonlinear}
	Let $\overline{x}_b = (x_{0b}, x_{1b}) \in  \R^d \times \R^d$ and $\mathring L_d$ a discrete Lagrangian with $\mathrm{Mm}^-(\overline{x}_b)$ non-degenerate.
	Let $c_b \in \R$, $p_b \in \R^d$, $c_\omega >0$. There exists a discrete Lagrangian $L_d$ such that $L_d$ is equivalent to $\mathring{L}_d$ and
	\begin{equation}\label{eq:normLdCondNonlinear}
		L_d(\overline{x}_b)=c_b,
		\quad 
		\mathrm{Mm}^-(L_d)(\overline{x}_b) = p_b,
		\quad
		N_\omega^-(L_d)(\overline{x}_b) = \left|\det\left( \frac{\p^2 L_d}{\p x_0 \p x_1}(\overline{x}_b)\right)\right| = c_\omega.
	\end{equation}
	
\end{proposition}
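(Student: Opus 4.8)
The plan is to mirror the proof of the continuous analogue \cref{prop:NormaliseLNonlinear}, replacing the transformation rules for continuous Lagrangians by those for discrete Lagrangians recorded in \cref{lem:TrafoGeomQuantitiesDiscrete}. First I would record the reference data $\mathring{c_b} = \mathring{L}_d(\overline x_b)$, $\mathring{p_b} = \mathrm{Mm}^-(\mathring{L}_d)(\overline x_b)$, and $\mathring{c_\omega} = N_\omega^-(\mathring{L}_d)(\overline x_b) = |\det \frac{\p^2 \mathring{L}_d}{\p x_0 \p x_1}(\overline x_b)|$. The non-degeneracy hypothesis on $\mathrm{Mm}^-$ at $\overline x_b$ --- equivalently, invertibility of the mixed Hessian block $\frac{\p^2 \mathring{L}_d}{\p x_0 \p x_1}(\overline x_b)$ --- guarantees $\mathring{c_\omega} > 0$, which is exactly what is needed to divide by it below.

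The ansatz is $L_d = \rho \mathring{L}_d + \Delta_t F + c$ with $\Delta_t F(x_0,x_1) = F(x_1) - F(x_0)$; for $\rho \neq 0$ this is a gauge equivalence in the sense of the discussion preceding \cref{lem:TrafoGeomQuantitiesDiscrete}. The three normalisation conditions of \eqref{eq:normLdCondNonlinear} then become equations for the three unknowns $\rho$, $F$, $c$, which decouple and can be solved in order. The only step that differs qualitatively from the linear normalisation of \cref{prop:NormaliseLd} is the determinant condition: since $N_\omega^-$ is the absolute value of the scalar coefficient of $\mathrm{Vol}(L_d)$ and $\mathrm{Vol}(L_d) = \rho^d \mathrm{Vol}(\mathring{L}_d)$ by \cref{lem:TrafoGeomQuantitiesDiscrete}, one obtains $N_\omega^-(L_d)(\overline x_b) = |\rho|^d \mathring{c_\omega}$; imposing equality with $c_\omega$ forces $\rho = \sqrt[d]{|c_\omega/\mathring{c_\omega}|}$, which is well defined and nonzero because $c_\omega > 0$ and $\mathring{c_\omega} > 0$. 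Here I would note in passing that $\frac{\p^2 L_d}{\p x_0 \p x_1}$ and $\frac{\p^2 L_d}{\p x_1 \p x_0}$ are transposes and hence carry the same determinant as appears in $\mathrm{Vol}$.

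With $\rho$ fixed, I would dispatch the remaining two conditions exactly as in \cref{prop:NormaliseLd}. Because $\mathrm{Mm}^-(L_d)(x_0,x_1) = \rho\, \mathrm{Mm}^-(\mathring{L}_d)(x_0,x_1) + \nabla F(x_0)$, choosing the affine-linear potential $F(x) = x^\top(p_b - \rho \mathring{p_b})$ makes $\nabla F$ the constant vector $p_b - \rho \mathring{p_b}$, so that $\mathrm{Mm}^-(L_d)(\overline x_b) = \rho \mathring{p_b} + (p_b - \rho \mathring{p_b}) = p_b$. Finally the value condition is linear in $c$: since $L_d(\overline x_b) = \rho \mathring{c_b} + (x_{1b} - x_{0b})^\top (p_b - \rho \mathring{p_b}) + c$, setting $c = c_b - \rho \mathring{c_b} - (x_{1b} - x_{0b})^\top(p_b - \rho \mathring{p_b})$ yields $L_d(\overline x_b) = c_b$.

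I do not anticipate a genuine obstacle: once the transformation rules are in hand the argument is a direct computation. The only points deserving care are (i) extracting $\mathring{c_\omega} \neq 0$ from the non-degeneracy hypothesis, and (ii) the degree-$d$ homogeneity of $\mathrm{Vol}$ in $\rho$, which introduces the $d$-th root and thereby distinguishes this nonlinear normalisation from the linear one of \cref{prop:NormaliseLd}, where $\rho$ was determined linearly. Equivalence of $L_d$ and $\mathring{L}_d$ follows since $\rho \neq 0$, which completes the verification of \eqref{eq:normLdCondNonlinear}.
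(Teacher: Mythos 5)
Your proposal is correct and matches the paper's own proof essentially verbatim: the same ansatz $L_d = \rho \mathring{L}_d + \Delta_t F + c$ with $\rho = \sqrt[d]{\left|c_\omega/\mathring{c_\omega}\right|}$, $F(x) = x^\top(p_b - \rho\,\mathring{p_b})$, and $c = c_b - \rho\,\mathring{c_b} - (x_{1b}-x_{0b})^\top(p_b - \rho\,\mathring{p_b})$, with the non-degeneracy hypothesis supplying $\mathring{c_\omega} \neq 0$ exactly as in the paper. Your added remarks on the degree-$d$ homogeneity of $\mathrm{Vol}$ and on interpreting the hypothesis as invertibility of the mixed Hessian block are accurate glosses on the same argument, not a different route.
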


\begin{proof}
	Let $\mathring{c_b} = \mathring L_d(\overline{x}_b)$, $\mathring{p_b}=\mathrm{Mm}^-(\mathring L_d)(\overline{x}_b)$, $\mathring{c_\omega} = N_\omega^-(\mathring L_d)(\overline{x}_b)$.
	The quantity $\mathring c_\omega$ is not zero since $\frac{\p \mathring L_d}{\p x_0 \p x_1}(\overline{x}_b)$ is non-degenerate. We set
	\[
	\rho = \sqrt[d]{\left|\frac{c_\omega}{\mathring{c_\omega}}\right|},
	\quad
	F(x) = x^\top (p_b- \rho \mathring{p_b}),
	\quad 
	c = c_b -\rho \mathring{c_b}  - (x_{1b}-x_{0b})^\top (p_b- \rho \mathring{p_b}).
	\]
	Now the Lagrangian $L_d = \rho \mathring{L}_d + \Delta_t F + c$ is equivalent to $L_d$ and fulfils \eqref{eq:normLdCondNonlinear}.
\end{proof}

Again, the condition $	N_\omega^-(L)(\overline{x}_b) = c_\omega > 0$ may be compared to the regularisation strategies for training discrete Lagrangians modelled as neural networks in \cite{DLNNPDE}:
denoting observation data by $\hat x^{(j)} = (x^{(j)}_0,x_1^{(j)} , x_2^{(j)})$, in \cite{DLNNPDE} (when transferred to our discrete ode setting) parametrises $L_d$ as a neural network and considers the minimisation of a loss function function $\ell = \ell_{\mathrm{data}} + \ell_{\mathrm{reg}}$ with data consistency term
\[
\ell_{\mathrm{data}} = \sum_j \| \mathrm{DEL}(L_d)(\hat{x}^{(j)})\|^2
\]
and with regularisation term $\ell_{\mathrm{reg}}$ that maximises the regularity of the Lagrangian at data points $\hat x^{(j)} = (x^{(j)}_0,x_1^{(j)} , x_2^{(j)})$:
\[
\ell_{\mathrm{reg}} = \sum  \left\| \left(\frac{\p^2 L}{\p x_0 \p x_1}(x_0^{(j)}, x_1^{(j)}) \right)^{-1} \right\|.
\]

\section{Derivation of symplectic structure induced by discrete Lagrangians}\label{app:SymplVolLd}
Denote the coordinate of the domain of definition $\R^d \times \R^d$ of a discrete Lagrangian $L_d$ by $(x_0,x_1)$.
Consider the two discrete Legendre transforms $\mod{\mathrm{Leg}}^\pm \colon \R^d \times \R^d \to T^\ast \R^d$ \cite{MarsdenWestVariationalIntegrators} with
\begin{align*}
	\mod{\mathrm{Leg}}^-(x_0,x_1) = (x_0,-\frac{\p L}{\p x_0}(x_0,x_1))
\qquad	
	\mod{\mathrm{Leg}}^+(x_0,x_1) = (x_1,\frac{\p L}{\p x_1}(x_0,x_1)).
\end{align*}

When we pullback the canonical symplectic structure $\sum_{k=1}^d \d q^k \wedge \d p_k$ on $T^\ast \R^d$ to the discrete phase space $\R^d \times \R^d$ with $\mod{\mathrm{Leg}}^\pm$ we obtain

\begin{align*}
	\mathrm{Sympl}^-(L_d)
	&= \sum_{s=1}^{d}
	\d x_0^s \wedge \d \left( - \frac{\p L_d}{\p x^s_0} \right)
	= \sum_{r,s=1}^{d}
	- \frac{\p^2 L_d}{\p x_0^s \p x_0^r} \d x_0^s \wedge \d x_0^r
	- \frac{\p^2 L_d}{\p x_0^s \p x_1^r} \d x_0^s \wedge \d x_1^r\\
	&= \sum_{r,s=1}^{d}
	- \frac{\p^2 L_d}{\p x_0^s \p x_1^r} \d x_0^s \wedge \d x_1^r\\
	\mathrm{Sympl}^+(L_d)
	&= \sum_{s=1}^{d}
	\d x_1^s \wedge \d \left( \frac{\p L_d}{\p x^s_1} \right)
	= \sum_{r,s=1}^{d}
	\frac{\p^2 L_d}{\p x_1^s \p x_0^r} \d x_1^s \wedge \d x_0^r
	+\frac{\p^2 L_d}{\p x_1^s \p x_1^r} \d x_1^s \wedge \d x_1^r\\
	&= \sum_{r,s=1}^{d}
	\frac{\p^2 L_d}{\p x_1^s \p x_0^r} \d x_1^s \wedge \d x_0^r
\end{align*}

We see $\mathrm{Sympl}^-(L_d)=\mathrm{Sympl}^+(L_d)$.

The 2-form corresponds to the notion of a {\em discrete Lagrangian symplectic form} in \cite[§1.3.2]{MarsdenWestVariationalIntegrators}.

\printglossary[title={List of notations}]\label{sec:glossary}

\end{document}